\numberwithin{equation}{section}
\renewcommand{\subsection}[1]{\hspace{-\parindent}\refstepcounter{subsection}{\bf (\arabic{section}\alph{subsection}) #1.}\addcontentsline{toc}{subsection}{\bf #1.}}
\newenvironment{nouppercase}{%
  \renewcommand{\uppercasenonmath}[1]{}}{}
\theoremstyle{plain}
\newtheorem{thm}{Theorem}[section]
\newtheorem{theorem}[thm]{Theorem}
\newtheorem{application}[thm]{Application}
\newtheorem{corollary}[thm]{Corollary}
\newtheorem{example}[thm]{Example}
\newtheorem{lemma}[thm]{Lemma}
\newtheorem*{claim*}{Claim} 
\newtheorem*{lemma*}{Lemma}
\newtheorem*{theorem*}{Theorem}
\newtheorem*{conjecture*}{Conjecture}
\newtheorem{remark}[thm]{Remark}
\newcommand{\bC}{{\mathbb C}}
\newcommand{\bQ}{{\mathbb Q}}
\newcommand{\bR}{{\mathbb R}}
\newcommand{\bZ}{{\mathbb Z}}
\newcommand{\scrH}{\mathcal H}
\newcommand{\scrJ}{\mathcal J}
\newcommand{\scrP}{\mathcal P}
\newcommand{\scrR}{\mathcal R}
\newcommand{\scrX}{\mathcal X}
\newcommand{\scrZ}{\mathcal Z}
\newcommand{\scrl}{\mathcal l}
\newcommand{\scrs}{\mathcal s}
\newcommand{\half}{{\textstyle\frac{1}{2}}}
\newcommand{\iso}{\cong}
\newcommand{\htp}{\simeq}
\newcommand{\smooth}{C^\infty}
\title[CONNECTIONS]{\Large\larger\rm Connections on equivariant \\ Hamiltonian Floer cohomology}
\author{Paul Seidel}
\begin{document}
\begin{nouppercase}
\maketitle
\end{nouppercase}

\begin{abstract}
We construct connections on $S^1$-equivariant Hamiltonian Floer cohomology, which differentiate with respect to certain formal parameters.
\end{abstract}

\section{Introduction\label{sec:intro}}

Floer cohomology often involves formal parameters, which take into account various topological features. This paper concerns differentiation with respect to such parameters. Before we turn to that, it may be appropriate to recall other contexts in which cohomology groups come with similar differentiation operations:
\begin{itemize}
\itemsep.5em
\item
In algebraic geometry, given a smooth family of algebraic varieties, the fibrewise algebraic de Rham cohomology carries the Gauss-Manin connection \cite{katz-oda68}. 
Griffiths transversality \cite{griffiths68, griffiths70} measures the failure of the Hodge filtration to be covariantly constant, and that is the starting point for the theory of variations of Hodge structures. 

\item
The Gauss-Manin connection has been generalized to noncommutative geometry by Getzler \cite{getzler95}, where it lives on the periodic cyclic homology of a family of dg (or $A_\infty$) algebras. Recall that periodic cyclic homology can be obtained from negative cyclic homology by inverting a formal parameter, here denoted by $u$. In Getzler's formula, only a simple pole $u^{-1}$ appears (hence, $u$ times that connection is an operation on negative cyclic homology). This property is the analogue of Griffiths transversality, in the formalism of variations of semi-infinite Hodge structures \cite{barannikov01}. 

\item
There is a related but distinct connection on periodic cyclic homology, which applies to a single dg algebra, and differentiates in $u$-direction \cite{katzarkov-kontsevich-pantev08, shklyarov14}. More precisely, the connection is defined for $\bZ/2$-graded dg algebras (and is basically trivial if the grading can be lifted to $\bZ$). It involves a $u^{-2}$ term, hence can be thought of as having (in general) an irregular singularity at the parameter value $u = 0$. In algebraic geometry, a related construction appears in the context of exponentially twisted de Rham cohomology.

\item
Closer to our interests is the (small) quantum connection in Gromov-Witten theory \cite{givental94, dubrovin98}. This differentiates in direction of the Novikov parameters, as well as another parameter, which one can think of as being our previous $u$. In the Calabi-Yau case, where differentiation in $u$-direction is not interesting, \cite{ganatra-perutz-sheridan15} announced a proof of the fact that the quantum connection is related to Getzler's connection on the cyclic homology of the Fukaya category, through the (cyclic) open-closed string map.
\end{itemize}

The aim of this paper is to construct connections on $S^1$-equivariant Hamiltonian Floer cohomology. The idea underlying the construction is quite general, since it mainly involves certain chain level TQFT operations (geometric realizations of the Cartan calculus in noncommutative geometry, 
which underpins Getzler's construction). However, we will not aim for maximal generality; instead, we illustrate the idea by two specific instances, leading, in slightly different contexts, to what we call the $q$-connection and $u$-connection. In cases where Floer cohomology reduces to ordinary cohomology, these reproduce appropriately specialized versions of the quantum connection. (One also expects them to be related to the corresponding structures in noncommutative geometry through open-closed string maps, but we will not pursue that direction in this paper.)
%

The original motivation comes from \cite{seidel16}. That paper considers (non-equivariant) symplectic cohomology, which is a specific instance or application of Hamiltonian Floer cohomology. One imposes a crucial additional assumption, which is that the class of the symplectic form should map to zero in symplectic cohomology. One then gets a connection on that cohomology, which is not canonical (it depends on the choice of an appropriate bounding cochain, which ``certifies'' the previously mentioned vanishing assumption). This looks somewhat different from our $q$-connection, which only exists for the $S^1$-equivariant theory, does not require any additional assumption, and is canonical. In spite of that,  one still expects to be able to relate the two connections, by means of a suitable intermediate object; see \cite[Section 3]{seidel16}.
The analogous situation in algebraic geometry would be the case of a family of smooth varieties with vanishing  Kodaira-Spencer class (this means that the family is infinitesimally trivial, but not necessarily globally trivial; after all, any family of affine varieties satisfies that condition). In that case, one can define a non-canonical connection on the spaces of fibrewise (algebraic) differential forms, which (in a suitable sense) induces the Gauss-Manin connection.

The structure of this paper is as follows. Section \ref{sec:results} introduces the relevant geometric situation, and states our main results. Section \ref{sec:morse} collects some background material about Morse theory on $\bC P^\infty$. Section \ref{sec:floer} is a review of some relevant aspects of Floer theory. Section \ref{sec:q} defines the $q$-connection, and in Section \ref{sec:u}, we adapt the previous argument to get the $u$-connection.

{\em Acknowledgments.} This work was partially supported by the Simons Foundation, through a Simons Investigator award; by NSF grant DMS-1500954; and by the Institute for Advanced Study, through a visiting appointment supported by grants from the Ambrose Monell Foundation and the Simonyi Endowment Fund. I would like to thank Sheel Ganatra and Nick Sheridan for illuminating discussions.

\section{Main constructions\label{sec:results}}
After recalling some basic Floer-theoretic notions and terminology, we explain the formal structure of the operations to be constructed. We also include a few comments about the wider context into which they fit (implications; relations with other developments; and possible generalizations).

\subsection{The $q$-connection\label{subsec:the-q}}
Let $(M,\omega)$ be a compact symplectic manifold with convex contact type boundary. (The boundary could be empty, even though that case is of less interest for us; also, we will not really make any use of the contact geometry of the boundary, other than to ensure suitable convexity properties for solutions of Cauchy-Riemann equations.) For technical simplicity, and also to strengthen the similarity with classical cohomology, we will assume that 
\begin{equation} \label{eq:cy}
c_1(M) = 0.
\end{equation}

Let $A \subset \bR$ be the additive subgroup generated by the integers and the periods $\omega \cdot H_2(M;\bZ)$, and $R \subset \bR$ the subring with the same generators. Clearly, $[\omega] \in H^2(M;\bR)$ can be lifted to $H^2(M;A)$; we pick such a lift, denoted by $[\Omega]$. We use a single-variable Novikov ring $\Lambda$ where the coefficients lie in $R$, and the exponents in $A$. This means that elements of $\Lambda$ are formal series
\begin{equation} \label{eq:novikov}
f(q) = r_0 q^{a_0} + r_1 q^{a_1} + \cdots, \quad r_i \in R, \;\; a_i \in A, \;\; \textstyle \lim_i a_i = +\infty.
\end{equation}
By construction, $\Lambda$ is closed under differentiation $\partial_q$.
Quantum cohomology is the graded ring obtained by equipping $H^*(M;\Lambda)$ with the (small) quantum product $\ast$. Take $u$ to be a formal variable of degree $2$, and extend the quantum product $u$-linearly to $H^*(M;\Lambda[[u]])$. Concerning the notation, let's point out that the distinction between polynomials and power series in $u$ is strictly speaking irrelevant here, because of the grading: in each degree, only finitely many powers of $u$ can appear. In spite of that, we keep the power series notation since it's appropriate in a more general context; the same will apply to Floer cohomology. The quantum connection (or rather, the part of that connection which concerns us at this point) is the endomorphism
\begin{equation} \label{eq:quantum-1}
\begin{aligned}
& D_q: H^*(M;\Lambda[[u]]) \longrightarrow H^{*+2}(M;\Lambda[[u]]), \\
& D_q x = u\partial_q x + q^{-1}[\Omega] \ast x.
\end{aligned}
\end{equation}
As defined, $D_q$ is a connection in $u\partial_q$-direction, which may feel awkward. If one wants to get a connection in the more standard sense (which means in $\partial_q$-direction, hence having degree $0$), one can instead take $u^{-1}D_q$, acting on $H^*(M;\Lambda((u)))$.

We will consider Floer cohomology groups $\mathit{HF}^*(M,\epsilon)$, for $\epsilon>0$, which are defined using functions whose Hamiltonian vector field restricts to $\epsilon$ times the Reeb field on $\partial M$ (assuming that there are no Reeb chords of length $\epsilon$). Each such group is a finitely generated $\bZ$-graded $\Lambda$-module. It also carries the structure of a module over the quantum cohomology ring, via the quantum cap product, which we will write as $\frown$. Our main object of study is the $S^1$-equivariant version of Floer cohomology, denoted  by $\mathit{HF}^*_{\mathit{eq}}(M, \epsilon)$. This is a finitely generated $\bZ$-graded module over $\Lambda[[u]]$. Equivariant Floer cohomology sits in a long exact sequence
\begin{equation} \label{eq:u-sequence}
\cdots \rightarrow \mathit{HF}^{*-2}_{\mathit{eq}}(M, \epsilon) \stackrel{u}{\longrightarrow} \mathit{HF}^*_{\mathit{eq}}(M, \epsilon) \longrightarrow \mathit{HF}^*(M, \epsilon) \longrightarrow \mathit{HF}^{*-1}_{\mathit{eq}}(M, \epsilon) \rightarrow \cdots
\end{equation}
We will often make use of the forgetful map (from equivariant to ordinary Floer cohomology) which is part of that sequence. Also of interest are the PSS maps, which are canonical maps
\begin{equation}
\label{eq:morse-floer} 
\xymatrix{
H^*(M;\Lambda[[u]]) \ar[d]^-{u = 0} \ar[rr]^-{B_{\mathit{eq}}} && \mathit{HF}^*_{\mathit{eq}}(M, \epsilon) \ar[d] \\
H^*(M;\Lambda) \ar[rr]^-{B} && \mathit{HF}^*(M, \epsilon).
}
\end{equation}
$B$ relates the quantum product with its cap product counterpart. Moreover, if we choose $\epsilon$ small, then both $B$ and $B_{\mathit{eq}}$ are isomorphisms. The $q$-connection can be described as follows:

\begin{theorem} \label{th:q}
There is a canonical additive endomorphism
\begin{equation} \label{eq:q-connection}
\begin{aligned}
& \Gamma_q: \mathit{HF}^*_{\mathit{eq}}(M, \epsilon) \longrightarrow \mathit{HF}^{*+2}_{\mathit{eq}}(M, \epsilon), \\
& \Gamma_q( f x) = f \Gamma_q(x) + u (\partial_q f) x \quad \text{for $f \in \Lambda[[u]]$,}
\end{aligned}
\end{equation}
which fits into a commutative diagram
\begin{equation} \label{eq:q-zero}
\xymatrix{ 
\ar[d] \mathit{HF}^*_{\mathit{eq}}(M, \epsilon) \ar[rr]^-{\Gamma_q} && \mathit{HF}^{*+2}_{\mathit{eq}}(M, \epsilon) \ar[d] \\
\mathit{HF}^*(M, \epsilon) \ar[rr]^-{q^{-1}[\Omega] \frown \cdot} && \mathit{HF}^{*+2}(M, \epsilon).
}
\end{equation}
Moreover, for small $\epsilon$, the isomorphism $B_{\mathit{eq}}$ identifies $\Gamma_q$ with $D_q$.
\end{theorem}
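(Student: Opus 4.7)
The plan is to construct $\Gamma_q$ at the cochain level in the form $\Gamma_q = u\partial_q + \Psi_q$, where $\partial_q$ differentiates Novikov coefficients formally and $\Psi_q$ is a $\Lambda[[u]]$-linear operation of degree $+2$ assembled from parametrized moduli spaces of Floer cylinders. Concretely, one equips the cylinder with an auxiliary interior marked point, weights the contribution of each solution by evaluating a cocycle representative of $q^{-1}[\Omega]$ at that marked point, and couples the Floer equation to the $S^1$-family of Hamiltonians pulled back from the universal bundle over $\bC P^\infty$ via the Morse model of Section \ref{sec:morse}. The formal piece $u\partial_q$ is added by hand on cochains.

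The Leibniz rule in \eqref{eq:q-connection} is then automatic, since $u\partial_q$ obeys it by construction and $\Psi_q$ is $\Lambda[[u]]$-linear. The essential analytic input is that $\Gamma_q$ descends to cohomology, i.e.\ commutes with $d$ up to homotopy. This is a codimension-one boundary analysis of the parametrized moduli space defining $\Psi_q$. Standard Gromov--Floer breakings contribute $d\Psi_q - \Psi_q d$. An additional boundary stratum arises as the interior marked point escapes to $\pm\infty$ along the cylinder; via the energy identity $E(u)=\int u^*\Omega$ and the integral lift $[\Omega]\in H^2(M;A)$, this stratum realizes $\pm[d,u\partial_q]$, because differentiating a Novikov coefficient $q^E$ in $q$ produces the factor $Eq^{E-1}$ that is geometrically captured by the $\Omega$-insertion (with the $u$ supplied by the equivariant Morse theory on $\bC P^\infty$). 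The two contributions cancel, yielding $[\Gamma_q,d]=0$.

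For the diagram \eqref{eq:q-zero}: pushing forward to the non-equivariant theory, the $u\partial_q$ term drops out (its image is $u$-divisible and hence killed by the forgetful map, by exactness of \eqref{eq:u-sequence}), and $\Psi_q$ degenerates to the standard TQFT operation on a cylinder with one interior insertion of $q^{-1}[\Omega]$, which is by definition the cap product $q^{-1}[\Omega]\frown\cdot$. For the identification with $D_q$ when $\epsilon$ is small: $B_{\mathit{eq}}$ intertwines the relevant TQFT operations with their quantum-cohomological counterparts, and a standard continuation family of moduli spaces connects the Floer-side $\Psi_q$ to the quantum-side operation $q^{-1}[\Omega]\ast$. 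Combined with the manifest compatibility of $u\partial_q$ on both sides, this gives $B_{\mathit{eq}}^{-1}\circ\Gamma_q\circ B_{\mathit{eq}}=u\partial_q+q^{-1}[\Omega]\ast=D_q$.

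The hard part is the boundary identification in the second paragraph: matching the ``marked point escapes to infinity'' stratum with $\pm[d,u\partial_q]$ requires a Stokes-type argument that exchanges the pointwise value of $[\Omega]$ for the topological energy of the cylinder, while the equivariant Morse theory on $\bC P^\infty$ must be arranged so that this trade produces exactly one factor of $u$ from the $S^1$-parameter rather than a higher power. Signs, the choice of cocycle representative of $[\Omega]$ (together with a well-behaved primitive on each cylinder), and the equivariant bookkeeping all have to be set up carefully from the outset; once that is done, the remaining verifications are of a standard TQFT-compatibility type.
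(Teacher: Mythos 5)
Your overall architecture matches the paper's: $\Gamma_q = u\partial_q + (\text{a TQFT correction})$ on the equivariant cochain level, with the Leibniz rule then automatic, the diagram \eqref{eq:q-zero} following from the $u=0$ part of the correction being the cap product $\iota$, and the comparison with $D_q$ being done via special time-independent $(H,J)$ for small $\epsilon$. But there is a genuine gap at the step you yourself flag as ``the hard part.''

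You treat the chain-map property as a single codimension-one boundary analysis in which a ``marked point escapes to $\pm\infty$'' stratum produces $\pm[d,u\partial_q]$. The paper instead splits the argument into two independent identities: the Cartan relation $d_{\mathit{eq}}\iota_{\mathit{eq}} - \iota_{\mathit{eq}} d_{\mathit{eq}} = u\lambda_{\mathit{eq}}$ \eqref{eq:cartan}, and the differentiation axiom $\partial_q d_{\mathit{eq}} - d_{\mathit{eq}}\partial_q = \lambda_{\mathit{eq}}$ \eqref{eq:diff-deq}. The second is elementary (it is \eqref{eq:concrete-lambda}: differentiating $q^{u\cdot\Omega}$ produces $(u\cdot\Omega)\,q^{u\cdot\Omega-1}$, and the moving-marked-point operation $\lambda_{\mathit{eq}}$ counts exactly those weights). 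The first is not elementary, and it is where the extra factor of $u$ is actually produced. The mechanism is \emph{not} the escape of a marked point along the Floer cylinder; rather, $\iota_{\mathit{eq},k}$ is parametrized by the spaces $R_k^b$ from \eqref{eq:break-3}, whose boundary contains $P_{k-1}^s$ — a drop of one Morse index, which is precisely the degree shift that converts $\lambda_{\mathit{eq},k-1}$ (appearing in $u^{k-1}$) into $u\lambda_{\mathit{eq}}$. The construction of $R_k^b$ (gluing the evaluation-constrained disc bundle $Q_k^b$ to the interpolating cobordism $R_k^s$ of Lemmas \ref{th:break-1}--\ref{th:break-2}) is a nontrivial piece of Morse theory on $\bC P^\infty$, and nothing in your sketch produces it or a substitute. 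As written, a naive marked-point-escape boundary in a fixed-$k$ moduli space would give a term in the same power of $u$, not one higher, so the factor of $u$ you need does not appear; you would obtain something closer to the non-equivariant chain-homotopy statement of Lemma \ref{th:iota-lambda}, $\lambda \htp \Delta\iota - \iota\Delta$, rather than the needed equivariant equality.

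A second, smaller point: in the small-$\epsilon$ identification, the useful statement is not that $B_{\mathit{eq}}$ ``intertwines TQFT operations'' in the abstract, but that for a suitably chosen time-independent $(H,J)$ (as in Lemma \ref{th:fhs-pss}) one can arrange the higher equivariant terms to vanish on the nose — $d_{\mathit{eq}}=d$, $\lambda_{\mathit{eq}}=0$, $\iota_{\mathit{eq}}=\iota$ — via index/dimension reasons, so that $\Gamma_q = u\partial_q + \iota$ literally equals $D_q$ under \eqref{eq:u-trivial}; one then checks that this identification agrees with the canonical $B_{\mathit{eq}}$. Merely invoking PSS-compatibility is not enough, since it must be verified that the equivariant correction terms to $B_{\mathit{eq}}$ do not interfere.

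So: right shape, right auxiliary ingredients (the $\Omega$-insertion and the divisor-axiom heuristic), but the central mechanism for the extra factor of $u$ — the hierarchy $R_k^b$ with the shift from $Q_k^s$ to $P_{k-1}^s$ — is missing, and the ``marked point escapes'' heuristic does not substitute for it.
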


Let's give at least a hint of the construction. Floer cochain complexes are, by definition, complexes of free modules over the Novikov ring, with a distinguished basis (up to signs). Using that basis, one can equip them with the naive operation of differentiation $\partial_q$ in the Novikov variable, but that operation does not commute with the Floer differential $d$. In our version of the definition, $d$ counts Floer trajectories with weights $\pm q^E$, where $E$ is the intersection number with a suitable cycle $\Omega$ representing the symplectic class. Clearly, the commutator $\partial_q d - d \partial_q$ counts those same trajectories with weights $\partial_q (\pm q^E) = \pm (q^{-1} E) q^E$. The idea is to interpret this new count as a kind of ``Lie action'' of the cohomology class $q^{-1}[\Omega]$. On the $S^1$-equivariant complex, the Lie action operation becomes nullhomotopic after multiplication with the equivariant formal parameter $u$. One uses the nullhomotopy to add a correction term to $u\partial_q$, turning it into a chain map, which induces $\Gamma_q$.

As one can see from this sketch, the $q$-connection is closely tied to the origin of Novikov rings as a way of keeping track of energy, hence to the non-exactness of the symplectic form. If $\omega$ is exact, one can take $\Omega = \emptyset$, in which case the coefficients of the Floer differential are $\pm 1$; then $\partial_q$ is already a chain map, and on cohomology, one has
\begin{equation} \label{eq:exact}
\Gamma_q = u\partial_q.
\end{equation}
Similarly, suppose that the periods are $\omega \cdot H_2(M;\bZ) = m\bZ$, for some integer $m \geq 2$ (and accordingly choose $[\Omega]$ to be the $m$-fold multiple of an integral class). In that case, $\Lambda = \bZ((q))$, but Floer cohomology can in fact be defined using only powers of $q^m$. As a consequence, if we consider the version of the theory with coefficients mod $m$, which we denote by $\mathit{HF}^*(M,\epsilon; \bZ/m)$ (even though it is actually defined using $\Lambda \otimes_{\bZ} \bZ/m = (\bZ/m\bZ)((q))$ as coefficient ring), then that version again carries a trivial $\partial_q$-operation. The same holds for the equivariant theory, and we have a commutative diagram which describes ``$\Gamma_q$ modulo $m$'':
\begin{equation} \label{eq:exact-mod-m}
\xymatrix{
\ar[d] \mathit{HF}^*_{\mathit{eq}}(M, \epsilon) \ar[rr]^-{\Gamma_q} && \mathit{HF}^{*+2}_{\mathit{eq}}(M, \epsilon) \ar[d] \\
\mathit{HF}^*_{\mathit{eq}}(M,\epsilon;\bZ/m) \ar[rr]^-{u\partial_q} && \mathit{HF}^{*+2}_{\mathit{eq}}(M,\epsilon; \bZ/m).
}
\end{equation}
 
If one wants a connection in $\partial_q$-direction, one can consider $u^{-1}\Gamma_q$, acting on 
\begin{equation} \label{eq:localise}
\mathit{HF}^*_{\mathit{eq}}(M, \epsilon) \otimes_{\bZ[[u]]} \bZ((u)) =
\mathit{HF}^*_{\mathit{eq}}(M, \epsilon) \otimes_{\Lambda[[u]]} \Lambda((u)). 
\end{equation}
In this context, we should mention how this fits in with the localisation theorem of \cite{zhao14, albers-cieliebak-frauenfelder16} (even though that will not be pursued further in the body of the paper). An appropriate generalization of that theorem shows that, after tensoring with $\bQ((u))$ instead of $\bZ((u))$ in \eqref{eq:localise}, the equivariant PSS map \eqref{eq:morse-floer} becomes an isomorphism for all $\epsilon$. Moreover, a generalization of the compatibility statement from Theorem \ref{th:q} (not proved here, but not tremendously hard) shows that this map still relates $D_q$ and $\Gamma_q$. Hence, the resulting version of $\Gamma_q$ can be recovered, up to isomorphism, from the standard Gromov-Witten theory of $M$. The ``up to isomorphism'' issue is not negligible, since it may not be easy to see what the localisation isomorphism does to geometrically relevant symplectic cohomology classes (see \cite[Section 3]{seidel16} for an example of this, involving Borman-Sheridan classes). Leaving that aside, note that tensoring with $\bQ((u))$ entails some loss of information ($\bZ$-torsion and $u$-torsion); it seems unlikely that $\Gamma_q$ itself has a description in terms of the Gromov-Witten theory of $M$.

We want to briefly mention some potential further developments. One could extend the construction to multivariable Novikov rings; this corresponds to the version of \eqref{eq:quantum-1} which uses the quantum product with all of $H^2(M;\Lambda)$. A genuinely new question that arises in the multivariable context is that of the (expected) flatness of the connection. It is also worth noting that the construction applies outside the context of Novikov completions as well. For instance, consider the case of an exact symplectic manifold. One can then define Floer cohomology with coefficients in the Laurent polynomial ring over $H_2(M;\bZ)/\mathit{torsion}$. For the $S^1$-equivariant version of that Floer cohomology theory, there is a connection which differentiates in all $H^2(M;\bZ)/\mathit{torsion}$ directions. An analogous idea may apply to string topology (where one studies the $S^1$-equivariant homology of a free loop space, with twisted coefficients).

\subsection{The $u$-connection\label{subsec:u}}
Let's replace \eqref{eq:cy} by the assumption that our symplectic manifold should be either exact or monotone, meaning that
\begin{equation} \label{eq:monotone}
[\omega] = \gamma\, c_1(M) \in H^2(M;\bR), \quad \text{for some $\gamma \geq 0$.}
\end{equation}
In this case, the quantum product can be defined without using the Novikov parameter, as a $\bZ/2$-graded product on $H^*(M;\bZ)$. We will use a different form of the quantum connection this time, namely the endomorphism of the $\bZ/2$-graded group $H^*(M;\bZ[[u]])$ given by
\begin{equation} \label{eq:quantum-2}
D_u x = 2u^2 \partial_u x - 2c_1(M) \ast x + u\mu(x),
\end{equation}
where
\begin{equation} \label{eq:mu}
\mu(x) = k x \quad \text{if $x \in H^k(M;\bZ) \otimes \bZ[[u]]$.}
\end{equation}

The assumption \eqref{eq:monotone} also allows us to define Floer cohomology and its equivariant cousin without using Novikov coefficients, as a finitely generated $\bZ/2$-graded abelian group and finitely generated $\bZ/2$-graded $\bZ[[u]]$-module, respectively (in spite of that difference in the formal setup, we will keep the same notation for them as before). There is also a $\bZ/2$-graded analogue of \eqref{eq:morse-floer}, involving $\bZ$ and $\bZ[[u]]$ as coefficient rings. The counterpart of Theorem \ref{th:q}, describing the basic properties of the $u$-connection, is:

\begin{theorem} \label{th:u}
There is a canonical $\bZ/2$-graded additive endomorphism
\begin{equation} \label{eq:u-connection}
\begin{aligned}
& \Gamma_u: \mathit{HF}^*_{\mathit{eq}}(M, \epsilon) \longrightarrow \mathit{HF}^*_{\mathit{eq}}(M, \epsilon), \\
& \Gamma_u( f x) = f \Gamma_u(x) + 2u^2 (\partial_u f) x \quad \text{for $f \in \bZ[[u]]$,}
\end{aligned}
\end{equation}
which fits into a commutative diagram
\begin{equation} \label{eq:u-zero}
\xymatrix{ 
\ar[d] \mathit{HF}^*_{\mathit{eq}}(M, \epsilon) \ar[rr]^-{\Gamma_u} && \mathit{HF}^*_{\mathit{eq}}(M, \epsilon) \ar[d] \\
\mathit{HF}^*(M, \epsilon) \ar[rr]^-{-2c_1(M) \frown \cdot} && \mathit{HF}^*(M, \epsilon).
}
\end{equation}
For small $\epsilon$, the isomorphism $B_{\mathit{eq}}$ identifies $\Gamma_u$ with $D_u$.
\end{theorem}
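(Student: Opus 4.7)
The strategy is to imitate the proof of Theorem \ref{th:q}, with the class $-2c_1(M)$ replacing $q^{-1}[\Omega]$, and with an additional grading-theoretic correction term that accounts for the $u\mu$ contribution in \eqref{eq:quantum-2}. In the monotone setting Floer cochains carry no explicit Novikov-parameter dependence, so $\partial_u$ takes the role formerly played by $\partial_q$. The key geometric input is the Riemann--Roch identity
\begin{equation*}
\mathrm{ind}(v) = \mathrm{CZ}(y) - \mathrm{CZ}(x) + 2 c_1(M) \cdot [v]
\end{equation*}
for a Floer cylinder $v$ from $x$ to $y$: Chern numbers of trajectories are recorded in index/grading data in much the same way that symplectic areas were recorded by Novikov exponents previously.

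Concretely, on $\mathit{CF}^*_{\mathit{eq}}(M,\epsilon)$ I would introduce the naive derivation $\mathcal{D} := 2u^2 \partial_u + uM$, where $M$ is a chain-level grading operator (multiplication of each generator by a suitable lift of its Conley--Zehnder index). Using the expansion $d_{\mathit{eq}} = d + u\delta_1 + u^2\delta_2 + \cdots$ from the Morse model of Section \ref{sec:morse}, a bookkeeping calculation should show that once the grading contributions are absorbed into the $uM$-term, the commutator $[d_{\mathit{eq}}, \mathcal{D}]$ becomes a weighted count of Floer trajectories with weights proportional to $2c_1(M) \cdot [v]$. This is the chain-level ``Lie action'' of $-2c_1(M)$; as in the $q$-connection case, multiplying by $u$ makes it nullhomotopic on the equivariant complex. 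One constructs an explicit nullhomotopy from a parametrized moduli space of equivariant Floer cylinders carrying an interior marked point constrained to a pseudo-cycle Poincar\'e dual to $-2c_1(M)$, parametrized over the Morse model of $ES^1 \to BS^1$. Adding this nullhomotopy to $\mathcal{D}$ yields a chain map whose induced operation on cohomology is $\Gamma_u$; the Leibniz identity in \eqref{eq:u-connection} is immediate from the derivation property of $\partial_u$ and the $\bZ[[u]]$-linearity of the correction, and canonicity is verified by the standard parametric moduli argument.

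The main technical obstacle I anticipate is handling the grading operator $M$ correctly: the $uM$-term has no analogue in the $q$-connection construction, and one must verify that it cancels the unwanted Conley--Zehnder contributions to $[d_{\mathit{eq}}, 2u^2\partial_u]$ precisely, so that only the $c_1$-weighted count remains. Getting the signs, combinatorics, and total-versus-internal grading conventions to align is where the bulk of the work lies, and this is also the step most sensitive to whether the construction extends cleanly beyond the monotone case. Once $\Gamma_u$ is in hand, the diagram \eqref{eq:u-zero} follows by specialising to $u = 0$, which kills $\mathcal{D}$ together with the higher-order part of the nullhomotopy and leaves only the Floer-theoretic operation descending to cap product with $-2c_1(M)$. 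Finally, agreement with $D_u$ under $B_{\mathit{eq}}$ for small $\epsilon$ is obtained by a parametrized PSS/continuation argument that compares the chain-level construction of $\Gamma_u$ against the analogous Morse-theoretic construction of $D_u$ on $H^*(M;\bZ[[u]])$; the same continuation-map input that exhibits $B_{\mathit{eq}}$ as an isomorphism in that range identifies the two operators.
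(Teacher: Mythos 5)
Your approach matches the paper's (naive differentiation $2u^2\partial_u$ plus a grading correction $u\mu$, with a homotopy-type correction by an equivariant incidence operation built over the Morse model of $BS^1$), but there is one substantive slip in the bookkeeping step. Once you carry out the commutator computation, using the index identity $i(x_-)-i(x_+) = 1 - 2(u\cdot C)$ for Floer cylinders contributing to $d$ and its analogue $\mathrm{index}(D_u) = -\mathrm{dim}(P_k) = 1-2k$ for cylinders contributing to $d_{\mathit{eq},k}$, you find
\begin{equation*}
(2u^2\partial_u + u\mu)\,d_{\mathit{eq}} - d_{\mathit{eq}}\,(2u^2\partial_u + u\mu) = u\,d_{\mathit{eq}} - 2u\lambda_{\mathit{eq}},
\end{equation*}
not just the weighted Floer-trajectory count $\pm 2u\lambda_{\mathit{eq}}$. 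The extra $u\,d_{\mathit{eq}}$ term comes from the $+1$ in the index formula (i.e.\ from the quotient by $s$-translation) and does not go away. Consequently, after adding the Cartan-type correction $-2\iota_{\mathit{eq}}$ to $\mathcal{D}$, the resulting operator $\Gamma_u(x) = 2u^2\partial_u x + u\mu(x) - 2\iota_{\mathit{eq}}(x)$ is \emph{not} a chain map: it satisfies $\Gamma_u d_{\mathit{eq}} - d_{\mathit{eq}}\Gamma_u = u\,d_{\mathit{eq}}$. Your claim that one obtains a chain map is therefore incorrect as stated. This does not, however, sink the construction: since the right-hand side $u\,d_{\mathit{eq}}$ annihilates cocycles and sends coboundaries to coboundaries, $\Gamma_u$ still descends to an additive endomorphism of $\mathit{HF}^*_{\mathit{eq}}(M,\epsilon)$, and the Leibniz rule and the $u=0$ reduction to $-2c_1(M)\frown\cdot$ go through exactly as you describe. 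You should emphasize this distinction: one is not producing a chain map, only an operator that induces a well-defined map on cohomology. The remaining steps (canonicity via the coupled/continuation formalism, and identification with $D_u$ via the equivariant PSS map for a small time-independent $(H,J)$) are in line with the paper's argument.
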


The $u$-connection is closely tied to the issue of gradings on Floer cohomology. If $c_1(M) = 0$ (which in our context implies that $[\omega]$ must vanish as well), one has $\bZ$-gradings as in Section \ref{subsec:the-q}. Let $\mathrm{deg}$ be the associated grading operator, which multiplies each element of $\mathit{HF}^*_{\mathit{eq}}(M, \epsilon)$ by its degree. Then, there is a disappointingly simple formula
\begin{equation} \label{eq:graded-gamma}
\Gamma_u(x) = u \,\mathrm{deg}(x).
\end{equation}
More generally, suppose that $c_1(M)$ is $m$ times some class in $H^2(M;\bZ)$, where $m \geq 1$ (of course, the $m = 1$ case always applies). A choice of such a class yields a $(\bZ/2m)$-grading, and one has a diagram analogous to \eqref{eq:exact-mod-m}:
\begin{equation} \label{eq:c1-mod-2m}
\xymatrix{
\ar[d] \mathit{HF}^*_{\mathit{eq}}(M, \epsilon) \ar[rr]^-{\Gamma_u} && \mathit{HF}^{*+2}_{\mathit{eq}}(M, \epsilon) \ar[d] \\
\mathit{HF}^*_{\mathit{eq}}(M,\epsilon;\bZ/2m) \ar[rr]^-{u\,\mathrm{deg}} && \mathit{HF}^{*+2}_{\mathit{eq}}(M,\epsilon; \bZ/2m).
}
\end{equation}
%
%
%
%
%
%
%

Let's assume that our symplectic manifold is monotone, which means \eqref{eq:monotone} with $\gamma>0$. In fact, let's normalize the symplectic form so that
\begin{equation} \label{eq:monotone-2}
[\omega] = c_1(M).
\end{equation}
One can then define a version of quantum cohomology which is $\bZ$-graded but periodic, by adding a formal variable $q$ of degree $2$. More precisely, we want to think of this as a ring structure on the graded $u$-adic completion of $H^*(M;\bZ[q,q^{-1},u])$, which we write as $H^*(M;\bZ[q,q^{-1}][[q^{-1}u]])$. This carries a (degree $2$) operation $D_q$ as in \eqref{eq:quantum-1}. Let $\mathrm{deg}_q$ be the grading operator on $H^*(M;\bZ[q,q^{-1}][[q^{-1}u]])$. Unlike \eqref{eq:mu} this takes the gradings $|q| = |u| = 2$ into account, so one can write it as
\begin{equation} \label{eq:deg-mu}
\mathrm{deg}_q = \mu + 2 u\partial_u + 2 q\partial_q.
\end{equation}
Using \eqref{eq:monotone-2}, one then has
\begin{equation} \label{eq:uq}
D_u = \big( u\, \mathrm{deg}_q - 2q D_q \big)_{q = 1}.
\end{equation}
What this means is: the expression in brackets is $\bZ[q,q^{-1}]$-linear, hence can be specialized to $q = 1$ (which simply means reducing the grading back to $\bZ/2$), and the result then agrees with the previously defined $D_u$. One can similarly define a version of Floer cohomology which is a $\bZ$-graded module over $\bZ[q,q^{-1}]$; and of equivariant Floer cohomology, over $\bZ[q,q^{-1}][[q^{-1}u]]$. The equivariant version carries a $q$-connection as in \eqref{eq:q-connection}. In parallel with \eqref{eq:uq}, this turns out to be related to the $u$-connection, 
\begin{equation} \label{eq:ueq2}
\Gamma_u = \big( u\, \mathrm{deg}_q - 2 q \Gamma_q \big)_{q = 1}.
\end{equation}

We want to make one more observation concerning the monotone case \eqref{eq:monotone-2}. In our original framework \eqref{eq:cy}, Floer cohomology was $\bZ$-graded, and gradings forced all $u$-series to be finite. A similar, but slightly more subtle, principle is at work in the monotone situation, allowing us (after making appropriately careful choices) to define a polynomial version of equivariant Floer cohomology, denoted by $\mathit{HF}^*_{\mathit{poly}}(M, \epsilon)$, which is a finitely generated $\bZ/2$-graded module over $\bZ[u]$, and from which the previous version is recovered by completion:
\begin{equation}
\mathit{HF}^*_{\mathit{eq}}(M, \epsilon) \iso \mathit{HF}^*_{\mathit{poly}}(M, \epsilon) \otimes_{\bZ[u]} \bZ[[u]].
\end{equation}
Similarly, one can define a $u$-connection on $\mathit{HF}^*_{\mathit{poly}}(M, \epsilon)$, of which our previously considered $\Gamma_u$ is the formal germ at $u = 0$. This is of interest because the polynomial (or indeed complex-analytic) theory of irregular connections is much richer than the formal theory (for applications of this theory to $D_u$, see e.g.\ \cite{galkin-golyshev-iritani16}). More immediately, the existence of the polynomial version of the $u$-connection has the following consequence:

\begin{corollary} \label{th:nonzero-u}
As a $\bZ[u]$-module, $\mathit{HF}^*_{\mathit{poly}}(M, \epsilon)$ cannot contain any direct summands  isomorphic to one of the following:
\begin{equation}
\begin{aligned}
& \bZ[u]/(u-\lambda)^d && \text{for $\lambda \neq 0$, and $d \geq 1$; or} \\
& (\bZ/p)[u]/(u-\lambda)^d && \text{for an odd prime $p$, and $d, \lambda$ both coprime to $p$.}
\end{aligned}
\end{equation}
\end{corollary}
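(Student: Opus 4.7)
The plan is to use the polynomial $u$-connection $\Gamma_u$ on $\mathit{HF}^*_{\mathit{poly}}(M,\epsilon)$, whose existence is asserted immediately before the corollary, and combine it with a short module-theoretic computation ruling out cyclic summands of the two specified types.

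First I would observe that the Leibniz rule $\Gamma_u(fx) = f\Gamma_u(x) + 2u^2(\partial_u f)x$ passes to direct summands. Indeed, if $\mathit{HF}^*_{\mathit{poly}}(M,\epsilon) = N \oplus N'$ as $\bZ/2$-graded $\bZ[u]$-modules, with canonical inclusion $\iota_N$ and projection $\pi_N$, then $\Gamma_u^N := \pi_N \circ \Gamma_u \circ \iota_N$ is an additive endomorphism of $N$ satisfying the same Leibniz rule: the $\bZ[u]$-linearity of $\iota_N, \pi_N$ and the identity $\pi_N \iota_N = \mathrm{id}_N$ make the transfer formal. It therefore suffices to show that the two kinds of cyclic $\bZ[u]$-modules named in the statement admit no such operator at all.

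Next I would let $x$ be the standard generator of $N$, so $(u-\lambda)^d x = 0$, and apply $\Gamma_u^N$ to this relation. The Leibniz rule yields
\[ 0 = (u-\lambda)^d \Gamma_u^N(x) + 2d\, u^2 (u-\lambda)^{d-1} x. \]
The first summand vanishes because $(u-\lambda)^d$ annihilates everything in $N$. Setting $v = u-\lambda$ and expanding $u^2 = (v+\lambda)^2 = \lambda^2 + 2\lambda v + v^2$, the second summand reduces modulo $v^d$ to $2d\lambda^2 v^{d-1} x$. Since $N$ is free over its coefficient ring ($\bZ$ or $\bZ/p$) with basis $1, v, \ldots, v^{d-1}$, the element $v^{d-1}x$ is nonzero, and the equation forces $2d\lambda^2$ to vanish in the coefficient ring. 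In $\bZ$ this contradicts $d \geq 1$ and $\lambda \neq 0$; in $\bZ/p$ with $p$ odd and $d, \lambda$ both coprime to $p$, the scalar $2d\lambda^2$ is a unit, again a contradiction.

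The decisive feature of the argument is the $u^2$ prefactor in the Leibniz rule, which produces the factor $\lambda^2$ and is precisely what forces $\lambda \neq 0$ to yield an obstruction (while summands at $\lambda = 0$ remain unobstructed, consistent with the formal nature of the irregular singularity), and what makes $p = 2$ genuinely exceptional. There is no real obstacle to executing the plan: once the polynomial refinement of $\Gamma_u$ asserted in the preceding discussion is available, the remainder is a one-line Leibniz rule computation, with only the trivial check that the generator of a cyclic graded module can be chosen homogeneous.
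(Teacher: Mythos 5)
Your proof is correct and follows the same approach as the paper: apply the Leibniz rule to $(u-\lambda)^d x = 0$, project to the summand, observe that the first term dies while the second survives. You simply make explicit what the paper leaves implicit (the expansion $u^2 = \lambda^2 + 2\lambda(u-\lambda) + (u-\lambda)^2$, which reduces the surviving term to $2d\lambda^2(u-\lambda)^{d-1}x$ modulo $(u-\lambda)^d$), and the formalization of the transfer to the summand via $\Gamma_u^N := \pi_N \circ \Gamma_u \circ \iota_N$, which is exactly what the paper means by ``projecting back.''
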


This may be a bit of a letdown, since such summands would yield extra information specific to the polynomial theory. However, for the $\bZ$-torsion part, not all such extra information is ruled out by Corollary \ref{th:nonzero-u} (and the remaining possibilites are known to occur in other contexts; see the example of $\bZ/2$-equivariant Lagrangian Floer cohomology in \cite[Section 7c]{seidel14c}). The proof is a one-liner: if $x$ were the generator of such a summand, then
\begin{equation}
0 = \Gamma_u ( (u-\lambda)^d x) = (u-\lambda)^d \Gamma_u x +  2d u^2 (u-\lambda)^{d-1} x.
\end{equation}
If one projects back to the relevant summand, the first term on the right hand side vanishes, while the second does not. For a more geometric view, let's replace $\bZ$ by $\bC$. Then, the idea is that, since the vector field $2u^2 \partial_u$ only vanishes at $u = 0$, a coherent sheaf that admits a connection in the direction of that vector field can't have torsion anywhere else (as in our discussion of $D_q$, it would be interesting to see how this relates to what one might get from localisation techniques).

\section{Morse-theoretic moduli spaces\label{sec:morse}}
Following a familiar strategy (compare e.g.\ \cite{bourgeois-oancea12, seidel-smith10, seidel14c}; in the last two references, the group involved is $\bZ/2$ rather than $S^1$), much of our discussion of $S^1$-equivariant Floer cohomology will be based on the Morse theory of $BS^1 = \bC P^\infty$. In this section, we use this Morse theory to produce various hierarchies of manifolds with corners (of course, one could also try to construct those manifolds directly in a combinatorial way, but that approach seems less natural).

\subsection{Setup}
The basic notation is:
\begin{align}
& 
\bC^\infty = \{w = (w_0,w_1,\dots)\; : \; \text{$w_j \in \bC$ vanishes for almost all $j$}\}, \\
&
B^\infty = \{w \in \bC^\infty\;:\; \|w\|^2 = \textstyle \sum_j |w_j|^2 \leq 1\}, \\
&
S^\infty = \partial B^\infty, \\
&
\bC P^\infty = S^\infty/S^1.
\end{align}
An important ingredient for us will be the shift self-embedding 
\begin{equation} \label{eq:shift-embedding}
\sigma(w_0,w_1,\dots) = (0,w_0,w_1,\dots)
\end{equation}
(we will allow a slight ambiguity in the notation here, using $\sigma$ for the shift acting on either of the spaces above). The quotient map will be denoted by
\begin{equation} \label{eq:q-bundle}
q: S^\infty \longrightarrow \bC P^\infty.
\end{equation}
Let $c_k \in \bC P^\infty$ be the $k$-th unit vector ($k \geq 0$). We will identify the fibre of \eqref{eq:q-bundle} over $c_k$ with $S^1$ in the obvious way. A notational remark is appropriate at his point. Following Floer theory conventions, we set $S^1 = \bR/\bZ$ throughout, so the identification is written as
\begin{equation} \label{eq:ck-identification}
\begin{aligned}
& S^1 \stackrel{\iso}{\longrightarrow} q^{-1}(c_k), \\
& r \longmapsto (0,\dots,e^{2\pi i r},0,\dots).
\end{aligned}
\end{equation}

We will use a specific complex hyperplane in $\bC P^\infty$, as well as a real hypersurface bounding its preimage in $S^\infty$. These are given by, respectively,
\begin{align}
\label{eq:h-hypersurface}
& \textstyle
H = \{\sum_j w_j = 0\} \subset \bC P^\infty, \\
& \textstyle
S = q^{-1}(H) \subset S^\infty, \\
\label{eq:b-bounding}
& \textstyle
B = \{\sum_j w_j \leq 0\} \subset S^\infty.
\end{align}
Clearly, $H \iso \bC P^\infty$ and $S \iso S^\infty$. One also has $B \iso B^\infty$, for instance by a suitable stereographic projection (away from $(1,0,\dots,0)$, to the linear subspace where the sum of all coordinates is zero):
\begin{equation} \label{eq:stereographic}
w \longmapsto 
\frac{1}{1-\sum_j w_j}(-\textstyle\sum_{j \neq 0} w_j,w_1,\dots). 
\end{equation}
The quotient map $q|B: B \rightarrow \bC P^\infty$ maps $B \setminus \partial B$ isomorphically to $\bC P^\infty \setminus H$, and collapses the boundary $\partial B = S$ onto $H$. 
(In the analogous finite-dimensional situation, $q|B$ describes how complex projective space is obtained from its hypersurface $H$ by attaching a cell.)

We will use the Morse function
\begin{equation} \label{eq:morse}
\begin{aligned}
& h: \bC P^\infty \longrightarrow \bR, \\
& h(w) = |w_1|^2 + 2|w_2|^2 + 3|w_3|^2 + \cdots
\end{aligned}
\end{equation}
and the standard (Fubini-Study) metric. The critical points are precisely the $c_k$, and they have Morse index $2k$. The negative gradient flow is the projectivization of the linear flow
\begin{equation} \label{eq:linear-flow}
s \cdot (w_0,w_1,w_2,\dots) = (w_0,e^{-2s}w_1,e^{-4s}w_2,\dots).
\end{equation}
The stable and unstable manifolds are
\begin{equation} \label{eq:stable-unstable}
\begin{aligned}
& W^s(c_k) = \{w_0 = \cdots = w_{k-1} = 0\}, \\
& W^u(c_k) = \{w_{k+1} = w_{k+2} = \cdots = 0\}.
\end{aligned}
\end{equation}
Those manifolds intersect transversally, making the flow Morse-Smale. Moreover, they are also transverse to \eqref{eq:h-hypersurface}.

We also want to fix a connection $A$ on the circle bundle \eqref{eq:q-bundle}. This must be invariant under the shift, and flat in a neighbourhood of each $c_k$. Every path joining two critical points yields a parallel transport map, which in view of \eqref{eq:ck-identification} can be thought of as an element of $S^1$. 

\subsection{Spaces of trajectories\label{subsec:trajectories}}
All our spaces are defined as standard compactifications (by broken trajectories) of suitable spaces of negative gradient trajectories for the function \eqref{eq:morse}. Concretely:

\begin{itemize}
\itemsep.5em
\item
For $k > 0$, consider the space of unparametrized trajectories going from $c_k$ to $c_0$ (using \eqref{eq:shift-embedding}, one can identify this with the space of trajectories from $c_{k+l}$ to $c_l$, for any $l$).  Denote the standard compactification of the trajectory space by $P_k$. This is a $(2k-1)$-dimensional smooth compact manifold with corners, and comes with a canonical identification (which describes its boundary as the union of codimension $1$ closed boundary faces)
\begin{equation} \label{eq:boundary-pk-d}
\partial P_k \; \iso \; \bigcup_{\!\!k_1+k_2 = k} P_{k_1} \times P_{k_2}.
\end{equation}
We will denote unparametrized trajectories by $[v]$, thinking of them as equivalence classes under the action of $\bR$. Points in the interior of a boundary face \eqref{eq:boundary-pk-d} correspond to two-component broken flow lines $([v_1],[v_2])$.

\item
A closely related space is the compactification of the space of parametrized trajectories, denoted by $P_k^p$ for $k \geq 0$. This has dimension $2k$, and satisfies
\begin{equation} \label{eq:boundary-pk-p}
\partial P_k^{p} \;=\; \Big( \bigcup_{k_1+k_2 = k} P_{k_1} \times P_{k_2}^{p} \Big)
\cup \Big( \bigcup_{k_1+k_2 = k} P_{k_1}^p \times P_{k_2} \Big).
\end{equation}

\item
Consider pairs $(w,v)$, where $v$ is a trajectory and $w \in S^\infty$ a point such that $q(w) = v(0)$. Such pairs form a circle bundle over the space of parametrized trajectories. There is also a compactification $P_k^s$, of dimension $2k+1$, which is a circle bundle over $P_k^b$, satisfying the obvious analogue of \eqref{eq:boundary-pk-p}:
\begin{equation} \label{eq:boundary-pk-s}
\partial P_k^s \;=\; \Big( \bigcup_{k_1+k_2 = k} P_{k_1} \times P_{k_2}^s \Big)
\cup \Big( \bigcup_{k_1+k_2 = k} P_{k_1}^s \times P_{k_2} \Big).
\end{equation}

\item
Finally, one could modify the most recent definition by allowing $w \in B^\infty$ to be a point lying on the line singled out by $v(0)$. This gives a disc bundle whose boundary is our previous circle bundle. The compactification $P_k^b$ has dimension $2k+2$, and satisfies 
\begin{equation} \label{eq:boundary-pk-b}
\partial P_k^b = P_k^s \cup \Big( \bigcup_{k_1+k_2 = k} P_{k_1} \times P_{k_2}^b \Big)
\cup \Big( \bigcup_{k_2+k_1 = k} P_{k_1}^b \times P_{k_2} \Big).
\end{equation}
\end{itemize}
%
The reader will have noticed that we have, without further ado, declared our compactified moduli spaces to be smooth manifolds with corners, in a way which is compatible with the product structure on boundary strata. Such smooth structures are constructed in \cite{latour94, burghelea-haller01, wehrheim12}, under the assumption that there is a local chart around each critical point, in which the Morse function and the metric are both standard. While our metric does not satisfy that condition, there are local charts around the critical points in which the gradient flow is linear, see \eqref{eq:linear-flow}; and that is sufficient to make the constructions go through. Alternatively, since our function and gradient flow are completely explicit, one could construct the necessary charts near the boundary strata by hand. 

Over each of our spaces of trajectories, there is a ``tautological family''
\begin{equation} \label{eq:tautological}
\begin{aligned}
& \scrP_k \longrightarrow P_k, \\
& \scrP_k^p \longrightarrow P_k^p, \\
& \scrP_k^s \longrightarrow P_k^s, \\
& \scrP_k^b \longrightarrow P_k^b.
\end{aligned}
\end{equation}
Let's consider the first case: 
\begin{itemize}
\item A point of $\scrP_k$ is represented by a (possibly broken) flow line with additional data:
\begin{equation} \label{eq:tauto-explicit}
(v_1,\dots,v_j,\scrl,\scrs) \quad \text{for some $j \geq 1$, $\scrl \in \{1,\dots,j\}$, and $\scrs \in \bR$.} 
\end{equation}
We identify two representatives iff they are related by the action of $(r_1,\dots,r_j) \in \bR^j$:
\begin{equation} \label{eq:tauto-explicit-2}
(v_1,\dots,v_j,\scrl,\scrs) \sim (v_1(\cdot+r_1),\dots,v_j(\cdot+r_j),\scrl,\scrs-r_\scrl).
\end{equation} 
\end{itemize}
$\scrP_k$ is a noncompact manifold with corners, carrying a free and proper $\bR$-action (by translation on $\scrs$); the map to $P_k$, which forgets $(\scrl,\scrs)$, is invariant under that action. In fact, the fibre of the map to $P_k$ over any broken trajectory with $j$ components can be identified with a disjoint union of $j$ copies of the real line (ordered in a preferred way), with $\bR$ acting by translation on each. This description is compatible with \eqref{eq:boundary-pk-d}, meaning that the restriction of $\scrP_k$ to $P_{k_1} \times P_{k_2} \subset \partial P_k$ is canonically identified with the disjoint union of the pullbacks of $\scrP_{k_1}$ and $\scrP_{k_2}$. Additionally, $\scrP_k$ comes with a smooth evaluation map to $\bC P^\infty$, which takes $(v_1,\dots,v_j,\scrl,\scrs)$ to $v_\scrl(\scrs)$; this intertwines the $\bR$-action with the negative gradient flow \eqref{eq:linear-flow}.
 
We also find it convenient to introduce a compactification $\bar\scrP_k$ (just as a topological space, without differentiable structure) by allowing the point on our flow line to degenerate. In the notation from \eqref{eq:tauto-explicit}, we now allow $\scrs = \pm\infty$, but additionally identify
\begin{equation}
(v_1,\dots,v_j,\scrl,+\infty) \htp (v_1,\dots,v_j,\scrl+1,-\infty).
\end{equation}
The map from \eqref{eq:tautological} extends to $\bar\scrP_k \rightarrow P_k$. The fibre of the extended map over a broken trajectory with $j$ components consists of $j$ copies of $\bar{\bR} = \bR \cup \{\pm\infty\}$, with the $+\infty$ point of each glued to the $-\infty$ point of the following one (so that overall, one gets a space homeomorphic to a closed interval). The $\bR$-action extends to a continuous action on $\bar\scrP_k$, which leaves $\bar\scrP_k \setminus \scrP_k$ fixed. The evaluation map to $\bC P^\infty$ extends continuously to $\bar\scrP_k$, taking $[v_1,\dots,v_j,\scrl,\pm\infty]$ to the critical point which is the $s \rightarrow \pm \infty$ limit of $v_\scrl$. Moreover, there are canonical continuous sections which single out the endpoints of the chain of $\bar{\bR}$'s:
\begin{equation} \label{eq:endpoint-sections}
\begin{aligned}
& y_{\pm}: P_k \longrightarrow \bar{\scrP}_k \setminus \scrP_k, \\
& y_-([v_1,\dots,v_j]) = [v_1,\dots,v_j,1,-\infty], \\
& y_+([v_1,\dots,v_j]) = [v_1,\dots,v_j,j,+\infty].
\end{aligned}
\end{equation}
The compactifications $\bar\scrP_k$ are compatible with \eqref{eq:boundary-pk-d}, in a sense which is similar to our previous statement of the same kind, and which we will therefore not spell out. 

The next case in \eqref{eq:tautological} is an appropriate modification of the previous construction:
\begin{itemize}
\item
A point of $\scrP_k^p$ is represented by
\begin{equation}
(v_1,\dots,v_j,i,\scrl,\scrs) \text{for some $j \geq 1$, $i,\scrl \in \{1,\dots,j\}$, and $\scrs \in \bR$.}
\end{equation}
Here, the component $v_i$ may be a constant flow line. We divide out by $\bR^{i-1} \times \{0\} \times \bR^{j-i} \subset \bR^j$, acting as in \eqref{eq:tauto-explicit-2}.
\end{itemize}
A fibre of the map $\scrP_k^p \rightarrow P_k^p$ over a trajectory with $j$ components again consists of $j$ copies of $\bR$, even if the parametrized component is constant. The space $\scrP_k^p$ comes with the same $\bR$-action as before. Additionally, there is a distinguished smooth section
\begin{equation} \label{eq:canonical-section}
\begin{aligned}
& y_*: P_k^p \longrightarrow \scrP_k^p, \\
& y_*([v_1,\dots,v_j,i]) = [v_1,\dots,v_j,i,i,0].
\end{aligned}
\end{equation}
There is also a compactification $\bar\scrP_k^p$, with additional sections as in \eqref{eq:endpoint-sections}. The other two cases in \eqref{eq:tautological} are parallel.

\begin{remark}
The reader may have noticed that $\bar\scrP_k$ is homeomorphic to $P_k^p$, hence after all does carry the structure of a smooth manifold with corners. The same is true for the other compactified moduli spaces, which can all be thought of as moduli spaces of broken trajectories with one marked point (which can lie on an additional constant component). However, those smooth structures will be irrelevant for our purpose.
\end{remark}

\subsection{Topological aspects\label{subsec:topology}}
In low-dimensional cases, the topology of the moduli spaces of trajectories is easy to determine: there are diffeomorphisms
\begin{equation} \label{eq:low}
\begin{aligned}
& && P_1 \iso S^1, && P_2 \iso S^1 \times D^2, \\
& P_0^p \iso \mathit{point}, && P_1^p \iso S^1 \times [0,1], \\
& P_0^s \iso S^1, && P_1^s \iso S^1 \times S^1 \times [0,1], \\
& P_0^b \iso D^2.
\end{aligned}
\end{equation}
In two of those cases, we want to fix choices of diffeomorphisms, which will be used in orientation arguments later on. For $P_0^s$, we take the obvious identification $S^1 \iso q^{-1}(c_0) = P_0^s$, which was spelled out in \eqref{eq:ck-identification}. For $P_1$, we choose 
\begin{equation} \label{eq:p1}
\begin{aligned}
& S^1 \stackrel{\iso}{\longrightarrow} P_1, \\
& r \longmapsto [s \mapsto (e^{2\pi i r}:e^{-2s}:0:\cdots)] = [s \mapsto (1:e^{-2s-2\pi i r}:0:\cdots)].
\end{aligned}
\end{equation}
We also need some more topological information about the boundary strata of the low-dimensional spaces \eqref{eq:low}.

\begin{lemma} \label{th:112}
Consider the map induced by \eqref{eq:boundary-pk-d} for $k = 2$,
\begin{equation} \label{eq:112}
H_1(P_1) \oplus H_1(P_1) = H_1(P_1 \times P_1) \longrightarrow H_1(P_2)
\end{equation}
(the domain and target are isomorphic to $\bZ^2$ and $\bZ$, respectively; and we know that the map is onto). This map is diagonal, meaning that it is invariant under switching the two $P_1$ factors.
\end{lemma}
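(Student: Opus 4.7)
The plan is to work directly in the explicit model $P_2 \iso S^1 \times D^2$ from \eqref{eq:low}. Since all flow lines from $c_2$ to $c_0$ lie in the $\bC P^2$ spanned by $w_0, w_1, w_2$ (because $W^u(c_2) = \{w_3 = w_4 = \cdots = 0\}$), one can gauge-fix $w_0 = 1$ (using projective rescaling) and $|w_2| = 1$ (using the $\bR$-flow), writing $w_2 = e^{i\alpha}$. The interior of $P_2$ is then identified with $\bC_{w_1} \times S^1_\alpha$, and the compactification at $|w_1| \to \infty$ contributes a boundary torus $S^1_\phi \times S^1_\alpha$ (with $\phi = \arg w_1$), so that $P_2 \iso D^2_{w_1} \times S^1_\alpha$ and $H_1(P_2) = \bZ$ is generated by the $\alpha$-loop. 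Note that $[S^1_\phi]$ bounds the transverse disk $D^2_{w_1} \times \{\alpha_0\}$, hence is nullhomologous in $P_2$.

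For each of the two boundary $P_1$'s I would next use \eqref{eq:p1} to fix the canonical generator of $H_1(P_1) \iso \bZ$: directly for the ``lower'' factor parametrizing $c_1 \to c_0$ trajectories, and via the shift $\sigma$ for the ``upper'' factor of $c_2 \to c_1$ trajectories. Analyzing the breaking limit $|w_1| \to \infty$ of a suitable one-parameter interior family, I would express each canonical generator as an explicit loop on $S^1_\phi \times S^1_\alpha$ and read off its $\alpha$-winding number. The computation should show that both canonical generators wind $\alpha$ once with the same sign (the $\phi$-windings may differ, but they are killed in $H_1(P_2)$), so that both map to the same class in $H_1(P_2)$, which is the asserted diagonality.

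The main obstacle --- essentially the entire content of the lemma --- is orientation bookkeeping. The ``naive'' loops in the parameters $\theta^{\mathit{up}} = \arg(w_1/w_2)$ and $\theta^{\mathit{down}} = \arg(w_1/w_0)$ give images in $H_1(P_2)$ of opposite signs, which would yield anti-diagonal rather than diagonal behavior. The point is that \eqref{eq:p1} and the shift $\sigma$ introduce exactly compensating signs: the identification of $c_2 \to c_1$ trajectories with $P_1$ via $\sigma$ reverses the natural complex orientation (as one sees by comparing $(e^{2\pi i r} : e^{-2s} : 0)$ with $(0 : e^{2\pi i r} : e^{-2s})$ and tracking the induced argument), and this sign precisely balances the difference between the two $\arg$-conventions above. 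Verifying this cancellation, and thereby confirming diagonal (rather than anti-diagonal) behavior, is the substantive part of the argument.
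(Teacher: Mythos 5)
Your overall plan --- identify the interior of $P_2$ with $\bC_{w_1} \times S^1_\alpha$, note that the $\phi$-loop bounds a disk while the $\alpha$-loop generates $H_1(P_2)$, and then compute the $\alpha$-winding of the two canonical generators coming from \eqref{eq:p1} --- is sound, and would yield the correct answer. However, the mechanism you identify as ``the substantive part of the argument'' is wrong. The shift $\sigma$ of \eqref{eq:shift-embedding} is a holomorphic embedding of $\bC P^\infty$ into itself, hence \emph{preserves} orientation; comparing $(e^{2\pi i r}:e^{-2s}:0)$ with $(0:e^{2\pi i r}:e^{-2s})$ does not produce any sign. What actually happens in your computation is that you chose the ratios inconsistently. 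Writing \eqref{eq:p1} in the form $[1:e^{-2s-2\pi i r}:0]$, the canonical parameter is $r_{\mathit{down}} = -\tfrac{1}{2\pi}\arg(w_1/w_0)$, and under $\sigma$ it becomes $r_{\mathit{up}} = -\tfrac{1}{2\pi}\arg(w_2/w_1)$. Your $\theta^{\mathit{down}} = \arg(w_1/w_0)$ is the \emph{negative} of this, while your $\theta^{\mathit{up}} = \arg(w_1/w_2)$ is the \emph{positive} of it --- the asymmetry is in your choice of which way to take the ratio, not in $\sigma$. Had you taken the matching conventions $\arg(w_1/w_0)$ and $\arg(w_2/w_1)$, both ``naive'' loops would already map to the same class in $H_1(P_2)$ (compute: $\phi \mapsto \phi+2\pi$ at fixed $\alpha-\phi$ gives $\alpha \mapsto \alpha + 2\pi$; and $\alpha - \phi \mapsto \alpha-\phi+2\pi$ at fixed $\phi$ also gives $\alpha \mapsto \alpha+2\pi$), so there is no cancellation to verify. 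If you run the calculation as you have set it up, you will find that $r_{\mathit{up}}$ and $r_{\mathit{down}}$ both wind $\alpha$ once negatively, but the sign flip you will see along the way sits in the $c_1 \to c_0$ factor (the one \emph{without} the shift), directly contradicting your stated explanation.

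For comparison, the paper's proof dispenses with explicit coordinates. It uses the $(S^1)^3$-action rotating $w_0,w_1,w_2$, observes that orbits of the subgroup $(1,e^{2\pi i r},1)$ are contractible in $P_2$ (some orbits are fixed points), and computes from the restriction of \eqref{eq:p1} and the shift that this subgroup acts with weights $+1$ and $-1$ on the two $P_1$ boundary factors. An orbit therefore represents the class $(1,-1) \in H_1(P_1 \times P_1)$ and lies in the kernel of \eqref{eq:112}. This single observation already forces the map to be diagonal, and it is inherently robust against the sign-convention pitfalls that your more explicit approach has to negotiate by hand.
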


\begin{proof}
Take the action of $(S^1)^3$ on $\bC P^\infty$ which rotates the first three coordinates. This induces an action on $P_2$, for which the diagonal subgroup acts trivially, and the orbits of the subgroup $(1,e^{2\pi i r}, 1)$ are contractible (since some of those orbits are fixed points, and any orbit can be deformed to one of them). If we let the same group act on the boundary \eqref{eq:112}, that action has weights $(0,1,-1)$ on the first $P_1$ factor, and weights $(1,-1,0)$ on the second $P_1$ factor (because those factors correspond to flow lines lying in $\{0\} \times \bC P^1 \times \{0,\dots,\}$ and $\bC P^1 \times \{0,\dots,\}$, respectively). In particular, the subgroup $(1,e^{2\pi i r},1)$ acts with weights $1$ and $-1$ on the two boundary factors. By taking an orbit of that subgroup, and moving it from the boundary to the interior, one sees that the element $(1,-1) \in H_1(P_1) \oplus H_1(P_1)$ lies in the kernel of \eqref{eq:112}.
\end{proof}

\begin{lemma} \label{th:01-s}
Consider the maps induced by \eqref{eq:boundary-pk-s} for $k = 1$,
\begin{equation} \label{eq:011-inclusions}
\begin{aligned}
& H_1(P_0^s) \oplus H_1(P_1) = H_1(P_0^s \times P_1) \longrightarrow H_1(P_1^s), \\
& H_1(P_1) \oplus H_1(P_0^s) = H_1(P_1 \times P_0^s) \longrightarrow H_1(P_1^s). 
\end{aligned}
\end{equation}
All groups involved are isomorphic to $\bZ^2$, and the maps are isomorphisms. Composing the first map in \eqref{eq:011-inclusions} with the inverse of the second map yields an element of $\mathit{GL}_2(\bZ)$ which, with respect to the bases determined by our fixed identifications, is given by 
\begin{equation} \label{eq:invmatrix}
\begin{pmatrix} 0 & 1 \\ 1 & 1 \end{pmatrix}.
\end{equation}
\end{lemma}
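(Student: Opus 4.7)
The plan is to explicitly trivialize $P_1^s$ as a circle bundle over $P_1^p$ using a global lift, then read off the matrix from the boundaries of two natural $2$-chains.

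Using \eqref{eq:p1}, I would parametrize $P_1^p$ by $(r,s) \in S^1 \times [-\infty, +\infty]$, where $v_r(s) = (e^{2\pi i r} : e^{-2s} : 0 : \cdots)$ is the parametrized trajectory. An explicit global section of $P_1^s \to P_1^p$, parametrized by the fiber coordinate $t \in S^1$, is given by
\[
w(r, s, t) = \frac{1}{\sqrt{1 + e^{-4s}}}\,\bigl( e^{2\pi i (r+t)},\ e^{-2s + 2 \pi i t},\ 0,\ \ldots \bigr),
\]
yielding a trivialization $P_1^s \cong S^1 \times [-\infty, +\infty] \times S^1$ in coordinates $(r,s,t)$. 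Taking limits and comparing with \eqref{eq:ck-identification}: as $s \to -\infty$, $w \to (0, e^{2\pi i t}, 0, \ldots) \in q^{-1}(c_1)$, identifying the left boundary $P_0^s \times P_1$ with coordinates $(t, r)$; as $s \to +\infty$, $w \to (e^{2\pi i (r + t)}, 0, \ldots) \in q^{-1}(c_0)$, identifying the right boundary $P_1 \times P_0^s$ with coordinates $(r, \tau)$ where $\tau \equiv r + t \pmod{1}$. The twist $\tau = r + t$ is the geometric expression of the Hopf bundle having Chern class $+1$ on the $\bC P^1$ swept out by $P_1$.

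Let $\alpha_L, \beta_L$ denote the generators of $H_1(P_0^s \times P_1)$ (from the $P_0^s$ and $P_1$ factors respectively) and $\beta_R, \alpha_R$ the generators of $H_1(P_1 \times P_0^s)$. Two natural $2$-chains in $P_1^s$ yield the required relations in $H_1(P_1^s)$: fixing $r = 0$ and varying $(t, s)$ identifies $\alpha_L$ with $\alpha_R$ (since $\tau = t$ when $r = 0$); fixing $t = 0$ and varying $(r, s)$ identifies $\beta_L$ with the diagonal loop $\{(r,r)\}_{r \in S^1} \subset P_1 \times P_0^s$, which represents $\alpha_R + \beta_R$. Hence the desired composition sends $\alpha_L \mapsto \alpha_R$ and $\beta_L \mapsto \alpha_R + \beta_R$, matching the matrix \eqref{eq:invmatrix}.

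The main obstacle is the bookkeeping around the twist: one must verify that \eqref{eq:ck-identification} is applied consistently at both $c_0$ and $c_1$, so that the $+1$ twist rather than $0$ or $-1$ appears in $\tau \equiv r + t$. Orientation signs on the $2$-chains themselves are not decisive here, since the target matrix lies in $\mathit{GL}_2(\bZ)$ rather than $\mathit{SL}_2(\bZ)$; only the twist coefficient controls the off-diagonal structure.
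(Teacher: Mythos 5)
Your proof is correct, and the approach is genuinely different from the paper's. The paper proves this lemma (and the companion Lemma~\ref{th:112}) uniformly via the $(S^1)^2$-action rotating the first two coordinates of $\bC^\infty$: since each orbit in $P_1^s$ is homotopy equivalent to the whole space, one computes the weight matrices of the action on the two boundary tori $P_0^s\times P_1$ and $P_1\times P_0^s$ (namely $\left(\begin{smallmatrix}0&1\\1&-1\end{smallmatrix}\right)$ and $\left(\begin{smallmatrix}1&-1\\1&0\end{smallmatrix}\right)$ with respect to the fixed identifications) and reads \eqref{eq:invmatrix} off from the resulting commutative triangle. You instead build an explicit global trivialization $w(r,s,t)$ of $P_1^s\to P_1^p$, push it to the two boundary faces, and extract the matrix from the boundaries of the $r=0$ and $t=0$ cylinders. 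Both give $\left(\begin{smallmatrix}0&1\\1&1\end{smallmatrix}\right)$; your computation (including the crucial $\tau=r+t$ at $s\to+\infty$, which I checked against \eqref{eq:ck-identification} and \eqref{eq:p1}) is right. The trade-off is that the paper's torus-action argument is organized so that the same scheme also handles Lemma~\ref{th:112}, whereas yours is more hands-on and self-contained for this particular $P_1^s$; it also avoids the need to know in advance that orbit inclusions are homotopy equivalences.

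One small quibble on the final paragraph: the remark that ``the target matrix lies in $\mathit{GL}_2(\bZ)$ rather than $\mathit{SL}_2(\bZ)$'' is not really what disposes of orientation worries. The matrix is uniquely determined and one does have to get the sign of the twist right. What actually makes the sign unambiguous in your argument is that each of the two cylinders has both boundary circles traced out by the \emph{same} chart coordinate ($t$ for the first, $r$ for the second), so flipping the orientation of the cylinder flips the sign of \emph{both} boundary circles simultaneously and the homology relation $\alpha_L=\alpha_R$ (resp.\ $\beta_L=\alpha_R+\beta_R$) is insensitive to it; the remaining sign content is carried by the explicit identifications at $c_0$ and $c_1$, which you check. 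I would state it that way rather than invoking $\mathit{GL}_2$ versus $\mathit{SL}_2$.
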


\begin{proof}
Take the action of $S^1 \times S^1$ on $\bC^\infty$ which rotates the first two coordinates. This induces an action on $P_1^s$, each orbit of which is homotopy equivalent to the whole space. If we restrict the action to $P_0^s \times P_1 \subset \partial P_1^s$, the action on $P_0^s$ has weights $(0,1)$, while that on $P_1$ has weights $(1,-1)$ (for the same reason as in Lemma \ref{th:112}). On the other boundary component $P_1 \times P_0^s$, we still get weights $(1,-1)$ on the $P_1$ factor, but weights $(1,0)$ on the $P_0^s$ factor. In other words, the maps \eqref{eq:011-inclusions} sit in a commutative diagram of isomorphisms (in which the desired map \eqref{eq:invmatrix} sits as the dashed arrow)
\begin{equation}
\xymatrix{
& H_1(S^1 \times S^1) \ar[dl]_{\left(\begin{smallmatrix} 0 & 1 \\ 1 & -1 \end{smallmatrix}\right)} 
\ar[dr]^{\left(\begin{smallmatrix} 1 & -1 \\ 1 & 0 \end{smallmatrix}\right)} &
\\
\ar@{-->}[rr]
\ar[dr] H_1(P_0^s \times P_1) && H_1(P_1 \times P_0^s) \ar[dl]
\\
& H_1(P_1^s).
}
\end{equation}
\end{proof}

The topology of the higher-dimensional spaces of trajectories is as follows:

\begin{lemma} \label{th:s-cobordism}
Up to homeomorphism, one has, for $k>0$,
\begin{equation}
\begin{aligned}
& P_k \iso S^1 \times D^{2k-2}, \\
& P_k^p \iso S^1 \times D^{2k-1}, \\
& P_k^s \iso S^1 \times S^1 \times D^{2k-1}, \\
& P_k^b \iso S^1 \times D^{2k+1}.
\end{aligned}
\end{equation}
\end{lemma}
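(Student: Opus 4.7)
The strategy is an induction on $k$ that proves all four homeomorphisms simultaneously, with the low-dimensional diffeomorphisms listed in \eqref{eq:low} as the base case. The central idea is that each of the moduli spaces carries a distinguished free $S^1$-action which accounts for one of the $S^1$-factors in the target homeomorphism type.

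The action I would use is rotation of the top coordinate, $w_k \mapsto e^{2\pi i r} w_k$, with all other $w_j$ fixed. Being diagonal on $\bC^\infty$, it commutes with the diagonal flow \eqref{eq:linear-flow} and preserves the stable/unstable manifolds \eqref{eq:stable-unstable}; hence it descends to each of $P_k$, $P_k^p$, $P_k^s$, $P_k^b$ and extends to their compactifications. Freeness must be checked stratum by stratum. In the interior, $w_k \neq 0$ makes the action manifestly free on parametrized trajectories, and this persists on the unparametrized quotient because the action commutes with the $\bR$-action. On a boundary stratum of the form $P_{k_1} \times P_{k_2}^{?}$ or $P_{k_1}^{?} \times P_{k_2}$ with $k_2 < k$, the factor involving only indices up to $k_2$ lies in the subspace $w_{k_2+1} = \cdots = 0$, in particular $w_k = 0$, so the action is trivial on that factor; on the complementary factor it coincides, via the shift identification \eqref{eq:shift-embedding}, with the analogous ``top coordinate'' rotation on a moduli space of strictly lower index, which is free by the inductive hypothesis.

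Because a principal $S^1$-bundle over a contractible base is canonically a product, the claims for $P_k$ and $P_k^p$ reduce to $P_k/S^1 \cong D^{2k-2}$ and $P_k^p/S^1 \cong D^{2k-1}$. The interior of each quotient is an affine space of the correct real dimension (coordinates obtained by normalising $|w_k|=1$ and then using the $w_k$-rotation, and in the parametrized case, no flow quotient), and the boundary, by the inductive hypothesis applied to each factor, decomposes as a union of pieces of the shape $D^{2k_1-2} \times S^1 \times D^{2k_2-2}$ (or analogous products for the other three spaces) glued along lower-dimensional corners built from disks and circles. I would then identify this assembled boundary with a topological sphere: for $k=2$ one sees directly $\partial(P_2/S^1) \cong S^1 = \partial D^2$, and for $k=3$ one recognises the classical decomposition $S^3 = (D^2 \times S^1) \cup (S^1 \times D^2)$, with the general case handled via Mayer--Vietoris (to compute homology) and van Kampen (for simple connectivity), finishing with the topological $h$-cobordism/$s$-cobordism theorem and the Poincaré conjecture in the relevant dimension to upgrade the homotopy-theoretic identification to a homeomorphism. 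The two remaining cases drop out formally: $P_k^s$ is by definition a circle bundle over $P_k^p \cong S^1 \times D^{2k-1}$, and $H^2(S^1 \times D^{2k-1}; \bZ) = 0$ forces triviality, giving $P_k^s \cong S^1 \times S^1 \times D^{2k-1}$; the associated disk bundle $P_k^b \to P_k^p$ is classified by the same cohomology, hence also trivial, giving $P_k^b \cong D^2 \times S^1 \times D^{2k-1} \cong S^1 \times D^{2k+1}$.

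The main obstacle is the explicit identification of $\partial(P_k/S^1)$ (and $\partial(P_k^p/S^1)$) with a sphere. The combinatorics of how the various boundary faces are glued along their common corners is prescribed by the stratified compactification, but verifying that the resulting CW complex is genuinely homeomorphic to $S^{2k-3}$ (resp.\ $S^{2k-2}$) requires some care; in low dimensions it is completely explicit, while in higher dimensions one is forced to combine a homological/simply-connectedness argument with the heavy machinery of the $h$-cobordism theorem, which also explains the label of the lemma.
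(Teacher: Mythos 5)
Your route is genuinely different from the paper's, though both ultimately rest on the $s$-cobordism/$h$-cobordism theorem. The paper does not pass to any $S^1$-quotient: it works directly with $P_k^p$, observing that the interior is $\bC^* \times \bC^{k-1} \cong S^1 \times \bR^{2k-1}$, then smooths the corners, removes a large closed piece from the interior, and applies the $s$-cobordism theorem — valid since $\pi_1 = \bZ$ has trivial Whitehead group — to the resulting cobordism from $S^1 \times S^{2k-2}$ to $\partial P_k^p$; the claims for $P_k^s$ and $P_k^b$ then follow exactly as you derive them, and $P_k$ is treated by the same cobordism argument (with $P_3$ by hand). You instead quotient first by the free rotation of $w_k$ — your freeness argument via the factorwise structure on boundary strata is sound — trading the $\pi_1 = \bZ$ cobordism for a simply connected one, but at the cost of showing that the compactified quotient (a compact contractible manifold with boundary, interior $\bR^{2k-2}$ or $\bR^{2k-1}$) has spherical boundary. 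As you flag, this is the main obstacle, and it is a genuine one: contractibility of a compact manifold with boundary does not by itself force the boundary to be a sphere (the Mazur manifold in dimension $4$ is the standard counterexample), so the simple-connectivity of the assembled boundary really must be established via van Kampen before Poincar\'e or $h$-cobordism can be brought to bear. The paper's route buys economy by sidestepping the boundary-sphere identification entirely; yours buys a more geometric picture, with the $S^1$-factor in the answer manifest as a free group action.
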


\begin{proof}
Let's start with the most basic situation, which is that of the $P_k^p$. The cases $k = 1,2$ can be dealt with by hand (the first is already in \eqref{eq:low}, and the second is easy, since we don't care about the differentiable structure here). We therefore assume that $\mathrm{dim}(P_k^p) \geq 6$. From \eqref{eq:stable-unstable} one sees that
\begin{equation}
P_k^p \setminus \partial P_k^p \iso \bC^* \times \bC^{k-1} \iso S^1 \times \bR^{2k-1}.
\end{equation}
Now consider $P_k^p$ itself, but with the corners smoothed, so that it is a compact manifold with boundary. After removing a large piece of the interior, one ends up with a cobordism between $S^1 \times S^{2k-1}$ and $\partial P_k^p$. The $s$-cobordism theorem (for manifolds with free abelian fundamental group) implies that this is trivial. Reattaching the piece we had removed yields the desired result.

By construction, $P_k^s$ is a circle bundle over $P_k^p$, and because of the topology of the latter space, that circle bundle is necessarily trivial. Similarly, $P_k^b$ is the disc bundle associated to that circle bundle, hence homeomorphic to $P_k^p \times D^2$. For the spaces $P_k$, one needs to repeat the previous $h$-cobordism argument (leaving one more case not in \eqref{eq:low}, namely $P_3$, to be dealt with by hand).
\end{proof}

The main way in which the topology of the moduli spaces enters into our discussion is through certain circle-valued maps. We will give two constructions of such maps: a direct geometric one, and another one by a topological argument.

\begin{lemma} \label{th:alpha}
There are smooth maps 
\begin{equation} \label{eq:alpha}
\alpha_k: P_k \longrightarrow S^1,
\end{equation}
such that: $\alpha_1$ has degree $1$; and the maps are compatible with \eqref{eq:boundary-pk-d}, in the sense that
\begin{equation} \label{eq:alpha-additivity}
\alpha_k\, |\, (P_{k_1} \times P_{k_2}) = \alpha_{k_1} + \alpha_{k_2}.
\end{equation}
\end{lemma}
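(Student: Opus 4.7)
The plan is to define $\alpha_k$ geometrically via parallel transport under the connection $A$. For an unbroken trajectory $v\colon \bR \to \bC P^\infty$ from $c_k$ to $c_0$, flatness of $A$ near the critical points makes parallel transport along $v$ into a well-defined $S^1$-equivariant bijection $q^{-1}(c_k) \to q^{-1}(c_0)$; via the identifications \eqref{eq:ck-identification}, this is translation by some element of $S^1$, which I would declare to be $\alpha_k([v])$. For a broken trajectory $([v_1],[v_2]) \in P_{k_1}\times P_{k_2} \subset \partial P_k$, the same recipe applied component by component (using flatness near the intermediate critical point $c_{k_2}$ to glue horizontal lifts) yields, by functoriality of parallel transport, the identity $\alpha_k([v_1],[v_2]) = \alpha_{k_1}([v_1]) + \alpha_{k_2}([v_2])$, which is exactly the additivity \eqref{eq:alpha-additivity}; deeper strata are handled in the same way.

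The main obstacle is verifying that $\alpha_k$ is smooth in the manifold-with-corners sense near $\partial P_k$. Away from the boundary, smooth dependence of horizontal lifts on the underlying trajectory is standard ODE theory with parameters. Near a broken limit, the gluing charts for $P_k$ parametrize near-broken trajectories whose middle arc spends an increasingly long time inside the flat neighborhood of the intermediate critical point; within that neighborhood $A$ admits a local trivialization in which horizontal lifts are constant, so the middle arc contributes no holonomy whatsoever, regardless of its length. Consequently, parallel transport along a glued trajectory is literally the composition of the parallel transports of its components, and combined with the exponential convergence rates furnished by the linear form \eqref{eq:linear-flow} of the flow near each critical point, this yields the required smoothness in the gluing coordinates.

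Finally, to evaluate $\deg(\alpha_1)$ I would use the parametrization \eqref{eq:p1} directly. The unit-norm lift $\tilde v_r(s) = (e^{2\pi i r}, e^{-2s}, 0, \ldots)/\sqrt{1+e^{-4s}}$ has limits $(0,1,0,\ldots) \in q^{-1}(c_1)$ and $(e^{2\pi i r}, 0, \ldots) \in q^{-1}(c_0)$, corresponding under \eqref{eq:ck-identification} to $r_1 = 0$ and $r_0 = r$ respectively. This lift is not in general horizontal, but the correction is given by $c(r) := \tfrac{1}{2\pi}\int_\bR A(\dot{\tilde v}_r(s))\,ds$, a smooth $\bR$-valued function of $r \in S^1$ (periodic because $\tilde v_{r+1} = \tilde v_r$). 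Thus $\alpha_1(v_r) = r - c(r)$; since $c$ takes values in $\bR$ rather than $\bR/\bZ$ and is bounded, it has degree $0$ as a map $S^1 \to S^1$, so $\deg(\alpha_1) = 1$.
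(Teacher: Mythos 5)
Your proposal is essentially identical to the paper's first proof: define $\alpha_k$ by parallel transport for the connection $A$, obtain the additivity \eqref{eq:alpha-additivity} from the concatenation property of parallel transport, and obtain smooth extension to $\partial P_k$ from the flatness of $A$ near the critical points. The only difference is the verification that $\deg(\alpha_1)=1$: the paper deforms $A$ to the standard round connection, for which parallel transport literally recovers \eqref{eq:p1}, while you do a direct computation showing that the holonomy is $r$ minus a periodic $\bR$-valued correction term; both arguments are correct.
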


\begin{proof}[First proof]
Given a gradient trajectory $v$ from $c_k$ to $c_0$, let's use parallel transport (for the connection $A$) to get a map $q^{-1}(c_k) \rightarrow q^{-1}(c_0)$. The parallel transport map is given by an element of $S^1$, and we set $\alpha_k([v])$ to be that element. The desired property for $k = 1$ can be shown, for instance, by deforming our connection $A$ to the standard round connection, for which the parallel transport maps exactly recover the identification \eqref{eq:p1}. Parallel transport maps extend smoothly to broken flow lines (this is easy to see since $A$ is flat near the critical points); and the equality \eqref{eq:alpha-additivity} is just their basic concatenation property.
\end{proof}

\begin{proof}[Second proof]
Choose an arbitrary $\alpha_1$ with the desired property, and consider the map 
\begin{equation}
\begin{aligned}
& \partial P_2 = P_1 \times P_1 \rightarrow S^1, \\
& ([v_1],[v_2]) \longrightarrow \alpha_1([v_1]) + \alpha_1([v_2]). 
\end{aligned}
\end{equation}
Lemma \ref{th:112} (or rather, the dual statement for cohomology) shows that this can be extended to $\alpha_2$. From now on, one proceeds inductively as follows. Suppose that, for some $l \geq 3$, we have already defined $\alpha_1,\dots,\alpha_{l-1}$ with the desired properties. The requirement \eqref{eq:alpha-additivity} then prescribed the value of $\alpha_l$ on $\partial P_l$. By Lemma \ref{th:s-cobordism}, the pair $(P_l, \partial P_l)$ is $2$-connected as soon as $l \geq 3$. Hence, any circle-valued map can be extended from $\partial P_l$ to $P_l$.
\end{proof}

\begin{lemma} \label{th:alphabeta-s}
There are smooth maps 
\begin{equation}
\alpha_k^s,\, \beta_k^s: P_k^s \longrightarrow S^1,
\end{equation}
such that: $\alpha_0^s$ is zero, while $\beta_0^s$ has degree $1$; and the restriction to \eqref{eq:boundary-pk-s} is given by
\begin{equation} \label{eq:left-alphabeta}
\left\{
\begin{aligned} 
& \alpha_k^s \,|\, (P_{k_1} \times P_{k_2}^s) = \alpha_{k_1} + \alpha_{k_2}^s, \\
& \beta_k^s \,|\, (P_{k_1} \times P_{k_2}^s) = -\alpha_{k_1} + \beta_{k_2}^s,
\end{aligned}
\right.
\end{equation}
and
\begin{equation}
\left\{
\begin{aligned} \label{eq:right-alphabeta}
& \alpha_k^s \,|\, (P_{k_1}^s \times P_{k_2}) = \alpha_{k_1}^s + \alpha_{k_2}, \\
& \beta_k^s \,|\, (P_{k_1} ^s \times P_{k_2}) = \beta_{k_1}^s.
\end{aligned}
\right.
\end{equation}
\end{lemma}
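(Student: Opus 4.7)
The plan is to produce, in parallel with the two proofs of Lemma \ref{th:alpha}, either a direct geometric construction via parallel transport or a topological induction. The geometric construction is the more transparent one, so I would present it first.

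\textbf{Geometric construction.} For a point of $P_k^s$, represented by a (possibly broken) parametrized trajectory $v$ together with $w \in q^{-1}(v(0)) \subset S^\infty$, define $\beta_k^s(w,v)$ to be the result of parallel-transporting $w$ backwards along $v$ into the starting critical fibre $q^{-1}(c_k)$, then identifying this with $S^1$ via \eqref{eq:ck-identification}. Since for $v$ the constant trajectory at $c_0$ the transport is trivial, $\beta_0^s$ is just the identification $q^{-1}(c_0) \iso S^1$, which has degree $1$. For $\alpha_k^s$, set $\alpha_0^s = 0$ and, for $k \geq 1$, define $\alpha_k^s$ to be the pullback of $\alpha_k$ under the forgetful map $P_k^s \to P_k^p \to P_k$ (or equivalently, the difference of the forward and backward transports of $w$, which is independent of $w$ and recovers the total parallel transport $\alpha_k$ along the underlying unparametrized trajectory).

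\textbf{Verification of boundary formulas.} The identities \eqref{eq:left-alphabeta} and \eqref{eq:right-alphabeta} then follow from the concatenation property of parallel transport, together with shift-invariance of $A$ and compatibility of \eqref{eq:ck-identification} with the shift. Concretely, on $P_{k_1} \times P_{k_2}^s$ with broken trajectory $(v_1, v_2)$ and $w$ over $v_2(0)$: backward transport of $w$ along $v_2$ gives $\beta_{k_2}^s(w, v_2) \in q^{-1}(c_{k_2}) \iso S^1$; the further backward transport along $v_1$ to $q^{-1}(c_k) \iso S^1$ differs from this by $-\alpha_{k_1}([v_1])$, because forward transport $q^{-1}(c_k) \to q^{-1}(c_{k_2})$ along $v_1$ is by $\alpha_{k_1}([v_1])$ under the shift-compatible identifications. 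This yields the first line of \eqref{eq:left-alphabeta}. On $P_{k_1}^s \times P_{k_2}$ the marked point $w$ already lies over a point on $v_1$, so backward transport sees only $v_1$ and the $v_2$-contribution is absent, giving the second line of \eqref{eq:right-alphabeta}. The statements for $\alpha_k^s$ reduce immediately to \eqref{eq:alpha-additivity} via the forgetful map, with the convention $\alpha_0^s = 0$ handling the degenerate strata where a $P_0^s$ factor appears.

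\textbf{Topological alternative.} One can instead construct $\alpha_k^s$ and $\beta_k^s$ inductively as in the second proof of Lemma \ref{th:alpha}. For $k=0$ one takes $\alpha_0^s = 0$ and $\beta_0^s$ the standard identification. For $k=1$ one extends from $\partial P_1^s = (P_0^s \times P_1) \cup (P_1 \times P_0^s)$ to $P_1^s \iso S^1 \times S^1 \times [0,1]$; here Lemma \ref{th:01-s} guarantees that the prescribed boundary values (which use the matrix \eqref{eq:invmatrix} to compare the two identifications) do extend, and this is the step that requires the most care. For $k \geq 2$, Lemma \ref{th:s-cobordism} combined with Poincaré–Lefschetz duality gives $H^1(P_k^s, \partial P_k^s) \iso H_{2k}(S^1 \times S^1 \times D^{2k-1}) = 0$, so any circle-valued map on $\partial P_k^s$ whose cohomology class on the boundary is the restriction of a class on the interior extends to $P_k^s$; the prescribed boundary data automatically satisfy this cocycle condition by the inductive hypothesis.

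The main obstacle is the bookkeeping of signs and identifications at the $k=1$ stage, where the two pieces $P_0^s \times P_1$ and $P_1 \times P_0^s$ of $\partial P_1^s$ meet the $\beta_0^s$-factor via genuinely different identifications; this is precisely why Lemma \ref{th:01-s} was proved beforehand, and it is the only place where the matrix \eqref{eq:invmatrix} matters for the construction.
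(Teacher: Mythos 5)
Your proposal reproduces both of the paper's proofs — the geometric one (via parallel transport for the connection $A$) and the topological inductive one (via Lemma \ref{th:01-s} at the base case and a connectivity argument for $k\geq 2$) — and the verification of the boundary formulas is correct, with the right signs. The geometric construction is essentially identical to the paper's first proof, and your expansion of the concatenation argument for \eqref{eq:left-alphabeta} and \eqref{eq:right-alphabeta}, which the paper dismisses as obvious, is accurate.

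One small slip in the topological alternative: the obstruction to extending a circle-valued map from $\partial P_k^s$ to $P_k^s$ lives in $H^2(P_k^s,\partial P_k^s)$, not in $H^1(P_k^s,\partial P_k^s)$ as you write. From the long exact sequence of the pair, $H^1(P_k^s)\to H^1(\partial P_k^s)$ is surjective precisely when $H^2(P_k^s,\partial P_k^s)$ vanishes; the group $H^1(P_k^s,\partial P_k^s)$ instead measures the ambiguity of an extension once one exists. The correct computation via Lemma \ref{th:s-cobordism} and Poincar\'e--Lefschetz duality is $H^2(P_k^s,\partial P_k^s)\iso H_{2k-1}(P_k^s)\iso H_{2k-1}(S^1\times S^1)=0$ for $k\geq 2$. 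This recovers what the paper states more compactly as $2$-connectedness of the pair $(P_k^s,\partial P_k^s)$, and your conclusion is unaffected.
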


\begin{proof}[First proof]
One can define $\alpha_k^s$ exactly as before, by parallel transport along $v$. Similarly, consider inverse parallel transport along $v|(-\infty,0]$, which yields a map $q^{-1}(v(0)) \rightarrow q^{-1}(c_k) \iso S^1$. We define $\beta_k^s(w,v)$ to be the image of $w$ under that map. The required properties are obvious. 
\end{proof}

\begin{proof}[Second proof]
Choose the maps first for $k = 0$. Then $(\alpha_1^s,\beta_1^s)$ is supposed to be a map whose restriction to the two boundary components of $P_1^s$ induces the following maps on homology:
\begin{equation}
\begin{pmatrix} 1 & 0 \\ -1 & 1 \end{pmatrix} \text{ for $P_1 \times P_0^s$,} \quad \text{ and } \quad
\begin{pmatrix} 0 & 1 \\ 1 & 0 \end{pmatrix} \text{ for $P_0^s \times P_1$.}
\end{equation}
Lemma \ref{th:01-s} shows that such maps exist. It is then easy to adjust them so that \eqref{eq:left-alphabeta} and \eqref{eq:right-alphabeta} are satisfied. As in Lemma \ref{th:alpha}, the rest of the construction is on autopilot: by Lemma \ref{th:s-cobordism}, the pair $(P_l^s,\partial P_l^s)$ is $2$-connected as soon as $l \geq 2$. Hence, any map $\partial P_l^s \rightarrow S^1 \times S^1$ extends to $P_l$.
\end{proof}

Finally, some orientation considerations will be needed. The interior of $P_k^p$ can be thought of as a locally closed complex submanifold of $\bC P^\infty$, and we orient it in the standard way. Again at an interior point $[v]$, the space $P_k$ comes with a short exact sequence
\begin{equation} \label{eq:r-quotient}
0 \rightarrow \bR \partial_s v\longrightarrow T_v P_k^p \longrightarrow T_{[v]} P_k \rightarrow 0.
\end{equation}
We choose our orientation of $P_k$ so that, for a splitting $T_v P_k^p \iso \bR  \oplus T_{[v]} P_k$ of \eqref{eq:r-quotient}, it is compatible with the orientation of $P_k^p$. For $P_k^s$ and $P_k^b$, we use a similar strategy, based on the long exact sequences
\begin{align}
\label{eq:circle-orientation-sequence}
& 
0 \rightarrow \bR (iw,0) \longrightarrow T_{(w,v)} P_k^s \longrightarrow T_v P_k^p \rightarrow 0, 
\\
&
0 \rightarrow \bC (v(0),0) \longrightarrow T_{(w,v)} P_k^b \longrightarrow T_v P_k^p \rightarrow 0.
\end{align}
As an example, consider $P_1$. In \eqref{eq:p1}, $(\partial_s v, \partial_r v)$ is a positively oriented basis of $T_v P_1^p$; hence, that parametrization is compatible with our overall choice of orientations. Likewise, the orientation coming from \eqref{eq:circle-orientation-sequence} is compatible with the identification $P_0^s \iso S^1$.

\begin{lemma} \label{th:orientations}
(i) The orientations of $P_k$ are compatible with \eqref{eq:boundary-pk-d}. This means that the boundary orientation induced by that of $P_k$ agrees with the product orientation of $P_{k_1} \times P_{k_2}$.
%
%
%

(ii) The orientations of $P_k^p$ are compatible with the product orientations of the boundary faces $P_{k_1} \times P_{k_2}^p$, while for faces of the form $P_{k_1}^p \times P_{k_2}$ the orientations are opposite.
%

(iii) The orientations of $P_k^s$ are compatible with \eqref{eq:boundary-pk-s}.
%

(iv) The orientations of $P_k^b$ are compatible with the product orientations of the boundary face $P_k^s$. The same holds for boundary faces $P_{k_1} \times P_{k_2}^b$, while for those of the form $P_{k_1}^b \times P_{k_2}$ the orientations are opposite.
%
\end{lemma}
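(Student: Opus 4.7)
The plan is to verify each orientation compatibility by explicit local computation in a gluing chart near the corresponding codimension-one boundary face, using the complex structure of $\bC P^\infty$ at the node critical point $c_{k_2}$. Statement (ii) is the foundational case; items (i), (iii), (iv) then follow by propagating signs through the defining exact sequences.

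For (ii), at a boundary point of $\partial P_k^p$, I set up a local affine chart on $\bC P^\infty$ centred on a point of the parametrized piece near $c_{k_2}$. The node-smoothing parameter is a single complex coordinate, providing a radial coordinate $\delta$ (with boundary at $\delta = 0$ and outward normal $-\partial_\delta$) and an angular coordinate $\chi$ (tangent to the boundary). The complex orientation of $P_k^p$ inherited from $\bC P^\infty$ takes the schematic form $\delta \, d\delta \wedge d\chi \wedge \omega_{\mathrm{rest}}$, so the induced boundary orientation is proportional to $-d\chi \wedge \omega_{\mathrm{rest}}$. For the face $P_{k_1} \times P_{k_2}^p$, the node-smoothing coordinate is a stable direction at $c_{k_2}$, and the convention \eqref{eq:p1} gives $dr_{k_1} = -d\chi/(2\pi)$ for the relevant first-piece parameter; the two minus signs cancel and the orientations agree. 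For the opposite face $P_{k_1}^p \times P_{k_2}$, the node-smoothing coordinate is instead an unstable direction at $c_{k_2}$, so \eqref{eq:p1} gives $dr_{k_2} = +d\chi/(2\pi)$ for the second-piece parameter, yielding the opposite overall sign as asserted. Low-dimensional cases from \eqref{eq:low} and \eqref{eq:p1} serve as the base of the computation.

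Items (i), (iii), (iv) are derived from (ii) by Koszul bookkeeping. For (i), the orientation of $P_k$ is obtained from $P_k^p$ by contraction with the reparametrization $\partial_s$: the two faces of $\partial P_k^p$ that map to a single face of $\partial P_k$ carry opposite orientations by (ii), but the distinct reparametrization directions $\partial_{s_1}, \partial_{s_2}$ used in the contractions contribute compensating Koszul signs (depending on the parity of $\dim P_{k_1}$), leaving uniformly compatible orientations on $\partial P_k$. For (iii), the added real fibre $\bR(iw, 0)$ of $P_k^s$ is one-dimensional, and the signs from moving $d\tau$ past this factor and subsequently rearranging $\omega_{\bR(iw, 0)}$ past $\omega_{P_{k_1}^{(p)}}$ multiply to cancel the sign discrepancy of (ii) on both faces. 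For (iv), the added complex fibre $\bC(v(0), 0)$ of $P_k^b$ is even-dimensional, so the corresponding Koszul signs are all $+1$ and the sign pattern of (ii) propagates unchanged; the $P_k^s \subset \partial P_k^b$ face is handled separately using the induced orientation of the disc fibre's boundary circle. The principal technical difficulty is the detailed Morse-theoretic gluing analysis at the node $c_{k_2}$ in local holomorphic coordinates, specifically verifying that the complex orientation of $P_k^p$ factorizes as claimed and that the angular coordinate $\chi$ matches the \eqref{eq:p1} parametrization with the correct sign (stable versus unstable direction) for each face.
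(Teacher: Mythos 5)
Your plan is a genuinely different route from the paper's, and broadly plausible. The paper anchors the induction at (i), introducing an auxiliary space $P_k^t$ of trajectories carrying two marked points: the boundary strata $P_{k_1}^p \times P_{k_2}^p$ of $P_k^t$ are products of complex manifolds, so compatibility of orientations there is immediate, and (i) is then deduced by contracting out the translation $\bR^2$-action. Statements (ii)--(iv) are obtained afterward by similar $\bR$-quotient and fibre arguments. You instead take (ii) as the anchor, proving it by an explicit local gluing-chart computation at the node $c_{k_2}$, and propagate to (i), (iii), (iv) by Koszul bookkeeping through the defining sequences \eqref{eq:r-quotient} and \eqref{eq:circle-orientation-sequence}. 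In effect you run the same $\bR$-quotient comparison between $P_k$ and $P_k^p$ in the opposite direction. Your bookkeeping in (iii) and (iv) does appear to account for the sign the paper singles out as subtle (that passing to the boundary and quotienting by an $\bR$-action do not commute on orientations), via moving the added fibre past the outward normal.

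The gap you flag yourself is, however, a real one: the base case (ii) is not actually established. Everything rests on identifying the angular coordinate $\chi$ of the complex node-smoothing parameter with a rotation parameter on $P_{k_1}$ or $P_{k_2}$, and on the assertion that the smoothing direction is stable at $c_{k_2}$ for one type of boundary face and unstable for the other. Neither is verified, and each of these signs is precisely the content of the lemma: a reversal in either would flip the conclusion. Note also that \eqref{eq:p1} only parametrizes $P_1$, so pinning down the sign of $d\chi$ relative to $dr_{k_1}$ or $dr_{k_2}$ for general $k_1,k_2$ requires a separate reduction (for instance, via the shift invariance \eqref{eq:shift-embedding}, reducing to the first few coordinates near $c_{k_2}$). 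The paper's choice to anchor at (i) via $P_k^t$ is designed to sidestep exactly this chart-level analysis: on $P_k^t$ the compatibility of orientations on $P_{k_1}^p \times P_{k_2}^p \subset \partial P_k^t$ follows from complex orientations alone, with no need to examine the gluing coordinate. As it stands, your proposal is a coherent plan with a different base case, but the foundational computation is still to be done.
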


\begin{proof}
We find it convenient to temporarily introduce another space $P_k^t$, which is the compactification of the space of flow lines $v$ equipped with two marked points $s_1 < s_2$. More precisely, we divide by the common $\bR$-action, so points in the interior of $P_k^t$ are equivalence classes $[s_1,s_2,v]$. Among the boundary strata of this space are
\begin{equation} \label{eq:t-boundary}
\bigcup_{k_1+k_2 = k} (P_{k_1}^p \setminus \partial P_{k_1}^p) \times (P_{k_2}^p \setminus \partial P_{k_2}^p) \subset \partial P_k^t,
\end{equation}
where one thinks of the boundary points as trajectories broken into two pieces, each of them carrying one marked point, which fixes the parametrization. Let's suppose that we have oriented $P_k^t$ by mapping (on the interior) $[s_1,s_2,v] \longmapsto (s_2-s_1,v(\cdot + s_1)) \in\bR \times P_k^p$, and using the complex orientation of (the interior of) $P_k^p$. Then, it is easy to see that \eqref{eq:t-boundary} is compatible with orientations.

Consider the $\bR^2$-action on $P_k^t$ by moving the two marked points. On the interior of the moduli space, this is given by 
$
(r_1,r_2) \cdot [0,s_2,v] = [r_1,s_2+r_2,v] = [0,s_2+r_2-r_1,v(\cdot+r_1)] 
$.
Assuming that $v$ is not constant, one gets a short exact sequence
\begin{equation} \label{eq:t-versus-nothing}
0 \rightarrow \bR \oplus \bR \longrightarrow T_{[0,s_1,v]}P_k^t \longrightarrow T_{[v]}P_k \rightarrow 0,
\end{equation}
where the first map takes the standard generators of $\bR^2$ to $(0,-1,\partial_s v)$ and $(0,1,0)$. In order for the resulting splitting
\begin{equation} \label{eq:t-splitting}
T_{[0,s_1,v]} P_k^t \iso \bR \oplus \bR \oplus T_{[v]}P_k
\end{equation}
to compatible with the chosen orientations, the two $\bR$ summands would have to appear in the opposite order; hence, \eqref{eq:t-versus-nothing} is incompatible with orientations. In the limit where $[s_1,s_2,v]$ degenerates to a point $(v_1,v_2)$ in a boundary stratum \eqref{eq:t-boundary}, the $\bR^2$-action becomes the reparametrization action on both factors. Again assuming that neither flow line is constant, we have another short exact sequence,
\begin{equation} \label{eq:prod-sequence}
0 \rightarrow \bR \oplus \bR \longrightarrow T_{v_1}P_{k_1}^p \oplus T_{v_2}P_{k_2}^p \longrightarrow T_{[v_1]}P_{k_1} \oplus T_{[v_2]}P_{k_2} \rightarrow 0,
\end{equation}
where the first map has image generated by $(\partial_{s_1} v_1,0)$ and $(0,\partial_{s_2}v_2)$. In order
for the resulting splitting
\begin{equation} \label{eq:t-boundary-splitting}
T_{v_1}P_{k_1}^p \oplus T_{v_2}P_{k_2}^p \iso \bR \oplus \bR \oplus T_{[v_1]}P_{k_1} \oplus T_{[v_2]}P_{k_2}
\end{equation}
to be compatible with the chosen orientation of $P_{k_1}$ and $P_{k_2}$, the second and third summands in \eqref{eq:t-boundary-splitting} would have to swap positions, leading to a Koszul sign
$(-1)^{\mathrm{dim}(P_{k_1})} = -1$, which agrees with that in \eqref{eq:t-splitting}. Using our previous observation about orientations in \eqref{eq:t-boundary}, we can now obtain (i).

We derive (ii) by a similar argument. Take a point $(v_1,[v_2]) \in P_{k_1}^p \times P_{k_2} \subset \partial P_k^p$, and suppose for simplicity that $v_1$ is not constant (to deal with the constant case, one would have to go back to the spaces $P_k^t$). The $\bR$-action by reparametrization yields an analogue of \eqref{eq:t-boundary-splitting},
\begin{equation}
\label{eq:p-boundary-splitting}
T_{v_1}P_{k_1}^p \oplus T_{[v_2]} P_{k_2} \iso \bR \oplus T_{[v_1]} P_{k_1} \oplus T_{[v_2]} P_{k_2},
\end{equation}
which is compatible with orientations. In the parallel case with $([v_1],v_2) \in P_{k_1} \times P_{k_2}^p$, one acquires a Koszul sign $(-1)^{\mathrm{dim}(P_{k_1})} = -1$. This explains the sign difference between the two kinds of boundary faces of $P_k^p$. To get the correct result, we need one more observation: the operations of dividing by an $\bR$-action, and passing to the boundary, don't commute in their effect on orientations (quotienting by $\bR$ and then passing to the boundary yields the opposite orientation of first passing to the boundary and then quotienting by $\bR$).

The proof of (iii) is similar to that of (i), and that of (iv) similar to that of (ii).
\end{proof}

In fact, only the spaces $P_k$ and $P_k^s$ will play a significant role in our application. We have included $P_k^p$ since it appears as an obvious intermediate step in the discussion; and $P_k^b$ because another, similarly defined, space will be important in the next section.

\subsection{Trajectories with evaluation constraints}
Spaces of trajectories going through a fixed submanifold are a well-known concept, usually arising in the definition of the cap product on Morse homology (see e.g.\ \cite[p.\ 177]{donaldson02b}). 
We will use the following specific instances.
\begin{itemize}
\itemsep.5em
\item
Let $Q_k^p$ be the compactification of the space consisting of trajectories $v$, with the usual limits, such that $v(0)$ lies on the hypersurface $H$ from \eqref{eq:h-hypersurface}. This is of dimension $2k-2$, and satisfies 
\begin{equation} \label{eq:boundary-pk-p2}
\partial Q_k^p = \Big( \bigcup_{k_1+k_2 = k} P_{k_1} \times Q_{k_2}^p \Big)
\cup \Big( \bigcup_{k_1+k_2 = k} Q_{k_1}^p \times P_{k_2} \Big).
\end{equation}
This description of the boundary strata relies on the compatibility of $H$ with the coordinate shift map \eqref{eq:shift-embedding}; more precisely, we use the fact that the intersection $H \cap \{w_0 = 0\}$ is the image of $H$ under the shift.

\item
Consider the space of pairs $(w,v)$, where $v$ is a trajectory and $w \in S = q^{-1}(H)$ a point such that $q(w) = v(0)$. This is a circle bundle over the previous space. Its compactification $Q_k^s$ has dimension $2k-1$, and satisfies
\begin{equation} \label{eq:boundary-pk-s2}
\partial Q_k^s = \Big( \bigcup_{k_1+k_2 = k} P_{k_1} \times Q_{k_2}^s \Big)
\cup \Big( \bigcup_{k_1+k_2 = k} Q_{k_1}^s \times P_{k_2} \Big).
\end{equation}

\item
One obtains spaces $Q_k^b$ by instead allowing any point $w \in B$, for $B$ as in \eqref{eq:b-bounding}, which lies on the complex line determined by $v(0)$. The resulting spaces are $2k$-dimensional, and satisfy
\begin{equation} \label{eq:boundary-pk-b2}
\partial Q_k^b = Q_k^s \cup \Big( \bigcup_{k_1+k_2 = k} P_{k_1} \times Q_{k_2}^b \Big)
\cup \Big( \bigcup_{k_1+k_2 = k} Q_{k_1}^b \times P_{k_2} \Big).
\end{equation}
\end{itemize}

In low-dimensional instances,
\begin{equation} \label{eq:low-dim-q}
\begin{aligned}
& && Q_1^p \iso \mathit{point}, \\
& && Q_1^s \iso S^1, \\
& Q_0^b \iso \mathit{point},
&& Q_1^b \iso \text{\it pair-of-pants}.
\end{aligned}
\end{equation}
Explicitly, the unique point of $Q_0^b$ consists of the constant trajectory $v \equiv [1:0:\cdots] \in \bC P^\infty$ together with the point $w = (-1,0,\dots) \in B$. The condition for trajectories in the interior of $Q_1^b$ is that $v(0)$ should lie in $\bC^* \subset \bC P^1$, and should come with a $w = (w_0,w_1,0,\dots) \in B \setminus \partial B$ such that $w_1/w_0 = v(0)$. There is one such $w$ for any $v(0)$, with the exception of $v(0) = -1$, where one would necessarily have $w \in \partial B$. Hence
\begin{equation} \label{eq:3-punctured}
Q_1^b \setminus \partial Q_1^b = \bC^* \setminus \{-1\}
\end{equation}
is a three-punctured sphere, which implies the statement about the compactification made in \eqref{eq:low-dim-q}.

\begin{lemma} \label{th:break-1}
There is a sequence of compact manifolds with corners $R_k^p$, $k>0$, satisfying
\begin{equation} \label{eq:break-1}
\partial R_k^p = Q_k^p \cup P_{k-1}^p \cup \Big( \bigcup_{k_1+k_2 = k} P_{k_1} \times R_{k_2}^p
\Big) \cup \Big( \bigcup_{k_1+k_2 = k} R_{k_1}^p \times P_{k_2} \Big),
\end{equation}
and these identifications of boundary strata are compatible with \eqref{eq:boundary-pk-p} and \eqref{eq:boundary-pk-p2}.
\end{lemma}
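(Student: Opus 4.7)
The plan is to construct $R_k^p$ by induction on $k$, in the spirit of the second proof of Lemma \ref{th:alpha}. For the base case $k = 1$, the product strata in \eqref{eq:break-1} are empty (they require $k_1, k_2 \geq 1$ with $k_1+k_2 = 1$), and \eqref{eq:low-dim-q} together with \eqref{eq:low} identifies both $Q_1^p$ and $P_0^p$ with a single point each; take $R_1^p = [0,1]$, with these as its endpoints.

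For the inductive step, suppose $R_{k'}^p$ has been constructed for all $1 \leq k' \leq k-1$, satisfying \eqref{eq:break-1} compatibly with \eqref{eq:boundary-pk-p} and \eqref{eq:boundary-pk-p2}. Form the candidate boundary
\[
X_k = Q_k^p \cup P_{k-1}^p \cup \bigcup_{\substack{k_1+k_2=k \\ k_1,k_2 \geq 1}} (P_{k_1} \times R_{k_2}^p) \cup \bigcup_{\substack{k_1+k_2=k \\ k_1,k_2 \geq 1}} (R_{k_1}^p \times P_{k_2})
\]
by gluing the pieces along their codimension-$1$ faces via the natural identifications from \eqref{eq:boundary-pk-d}, \eqref{eq:boundary-pk-p}, \eqref{eq:boundary-pk-p2}, and the inductive description of $\partial R_{k'}^p$. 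The first step is to verify that $X_k$ is a closed smooth $(2k-2)$-manifold with corners, which amounts to checking that every codimension-$2$ stratum appears as a codimension-$1$ face of exactly two pieces, with consistent identifications. For instance, a stratum $P_a \times Q_b^p$ with $a+b=k$ occurs both in $\partial Q_k^p$ (via \eqref{eq:boundary-pk-p2}) and as a face of $P_a \times R_b^p$ (via $Q_b^p \subset \partial R_b^p$); a stratum $P_a \times P_{b-1}^p$ with $a+b=k$ occurs both in $\partial P_{k-1}^p$ (via \eqref{eq:boundary-pk-p}) and as a face of $P_a \times R_b^p$ (via $P_{b-1}^p \subset \partial R_b^p$); and the mixed strata of the form $P_a \times P_b \times R_c^p$ are matched using $\partial P_{a+b}$ on one side and the $P_b \times R_c^p$ face of $\partial R_{b+c}^p$ on the other (and symmetrically for the right-hand product strata).

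The second step is to realize $X_k$ as the boundary of a compact $(2k-1)$-dimensional manifold with corners $R_k^p$. This can be done by a direct geometric construction in the style of the first proof of Lemma \ref{th:alpha}: take a $1$-parameter family of codimension-$2$ submanifolds $(H_T)_{T \in [0,\infty]}$ of $\bC P^\infty$ interpolating between $H_0 = H$ and a limit at $T = \infty$ that degenerates into the critical point $c_{k-1}$, and define $R_k^p$ as the compactification of the moduli space of pairs $(v, T)$ with $v \in P_k^p$, $T \in [0,\infty]$, and $v(0) \in H_T$; at $T = 0$ one recovers $Q_k^p$, while at $T = \infty$ the constraint forces the trajectory to break at $c_{k-1}$, yielding a stratum identified with $P_{k-1}^p$. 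Alternatively, one can proceed topologically: by Lemma \ref{th:s-cobordism}, the pair $(P_k^p, \partial P_k^p)$ is highly connected, and this enables an inductive filling of $X_k$ in parallel with the second proof of Lemma \ref{th:alpha}.

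The main obstacle is the combinatorial verification in the first step, which is routine but tedious, and which must also keep track of orientations in a manner consistent with Lemma \ref{th:orientations}. In the geometric construction, a secondary subtlety is identifying the $T \to \infty$ boundary stratum canonically with $P_{k-1}^p$: this requires a gluing analysis showing that the ``upper piece'' moduli (trajectories from $c_k$ to $c_{k-1}$, constrained by the degeneration of $H_T$) collapses to a single point in the limit, so that only the parametrized lower piece remains as a free parameter.
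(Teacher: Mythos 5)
The proposal's plan — verify that the boundary pieces glue to a closed stratified manifold $X_k$, then realize $X_k$ as a boundary — is in principle sound at the level of outline, but step two, the actual filling, is where both of your alternatives break down, and this is precisely where the paper's proof does the real work.

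The paper's construction is quite different from either of your fillings. It keeps the hypersurface $H$ completely fixed and instead works with pairs of half-flow-lines $(v^-,v^+)$ as in \eqref{eq:half-trajectories}, deforming the \emph{linear matching condition} at $s=0$ between the endpoint coordinates $x^-$ and $x^+$ through two explicit families \eqref{eq:match-2} and \eqref{eq:match-3} parametrized by $\theta$ and $\eta$. For every fixed parameter value, the resulting space $R_k^{p,\theta}$ (resp.\ $R_k^{p,\eta}$) is a bona fide compactified moduli space of broken matched half-trajectories, with only the product-type boundary strata \eqref{eq:boundary-theta}; $R_k^p$ is then the union over a path in the parameter, and the $Q_k^p$ and $P_{k-1}^p$ boundary strata are simply the $\theta=0$ and $\eta=0$ slices. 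The identification $R_k^{p,\eta=0}\iso P_{k-1}^p$ happens because the $\eta=0$ conditions force $x_0^-=0$ and identify the remaining coordinates up to index shift, so that $(v^-,v^+)$ assembles into a parametrized trajectory from $c_{k-1}$ after applying $\sigma^{-1}$; no dimension jump or degenerate constraint ever occurs.

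Your geometric alternative — letting a codimension-two cycle $H_T$ degenerate onto the point $c_{k-1}$ as $T\to\infty$ — does not give a clean compactification. As $H_T$ collapses to a point, the condition $v(0)\in H_T$ stops being a codimension-two constraint, so the expected dimension of the constrained moduli jumps at $T=\infty$ and the $T=\infty$ fibre is not $P_{k-1}^p$ on the nose. (A further issue you flag yourself: at the break, the upper piece ranges over $P_1\iso S^1$, not a point, and you would need to argue that the constraint kills exactly that $S^1$ of freedom; this is not automatic.) The paper avoids this entirely by never degenerating the incidence locus.

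Your topological alternative is not a valid filling argument. Lemma~\ref{th:s-cobordism} gives that $(P_k^p,\partial P_k^p)$ is highly connected; as in the second proof of Lemma~\ref{th:alpha}, this lets you \emph{extend maps} (say, circle-valued functions) from the boundary into the interior. It does not let you manufacture a compact manifold with corners whose boundary is a given closed stratified space $X_k$ — that would require an a priori reason why $X_k$ bounds in the appropriate cobordism sense, and no such reason is given. In fact, knowing that $R_k^p\iso [0,1]\times P_{k-1}^p$ (as in \eqref{eq:trivial-topology-2}) is a \emph{consequence} of the lemma, not something available to bootstrap it.

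So the gap is concrete: you have correctly assembled the candidate boundary $X_k$, but neither of your two proposed constructions of the filling manifold is correct, and the actual construction (deforming the matching conditions between the half-trajectories, not the hypersurface, through the explicit two-stage family \eqref{eq:match-2}–\eqref{eq:match-3}) is the essential content of the lemma.
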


\begin{proof} 
Consider pairs of half-flow-lines $(v^-,v^+)$:
\begin{equation} \label{eq:half-trajectories}
\left\{
\begin{aligned} 
& \textstyle v^-: (-\infty,0] \longrightarrow \bC P^\infty, \;\; \partial_s v^- + \nabla h = 0, &&
\textstyle\lim_{s \rightarrow -\infty} v^-(s) = c_k, \;\; v^-(0) = x^-,
\\ 
& \textstyle v^+: [0,\infty) \longrightarrow \bC P^\infty, \;\; \partial_s v^+ + \nabla h = 0, &&
\textstyle\lim_{s \rightarrow +\infty} v^+(s) = c_0, \quad v^+(0) = x^+.
\end{aligned}
\right.
\end{equation}
The endpoints $(x^-,x^+)$ can be any points in $\bC P^\infty$ satisfying
\begin{equation} \label{eq:added}
\left\{
\begin{aligned}
& x_k^- \neq 0, \\ & x_{k+1}^- = x_{k+2}^- = \cdots = 0, \\ & x_0^+ \neq 0.
\end{aligned}
\right.
\end{equation}
One obtains the interior of $Q_k^p$ by additionally imposing the coincidence conditions
\begin{equation} \label{eq:match-1}
\left\{
\begin{aligned}
& x^-_0 + x^-_1 + \cdots = 0, \\ 
& x^-_0 = x^+_0, \\
& x^-_1 = x^+_1, \\
& \dots
\end{aligned}
\right.
\end{equation}
Originally, these equations took place in $\bC P^\infty$ so we should say that $x_j^- = \lambda x_j^+$ for some $\lambda \in \bC^*$. However, it is notationally a bit simpler to ask for $\lambda = 1$, and correspondingly consider the $x^{\pm}$ up to rotation by a common factor.

Let's introduce a parameter $\theta \in \bC$, and deform the conditions in \eqref{eq:match-1} as follows:
\begin{equation} \label{eq:match-2}
\left\{
\begin{aligned}
& x^-_0 + x^-_1 + \cdots = 0, \\
& x^-_0 = x_0^+, \\
& x^-_1 = x_1^+ - \theta x_0^+, \\
& x^-_2 = x_2^+ - \theta x_1^+, \\
& \dots
\end{aligned}
\right.
\end{equation}
Note that if one sets $x_0^+ = 0$ in these equations, it follows that $x_0^- = 0$ as well, and then the equations for the remaining $x_j^{\pm}$ reproduce the original ones after an index shift $j \mapsto j-1$. There is a minor issue here, which becomes evident when combining \eqref{eq:match-2} with the convergence conditions \eqref{eq:added}: for general $\theta$, we have to allow the point $x^+$ to lie outside $\bC P^\infty$, since it satisfies $x_j^+ = \theta x_{j-1}^+$ for all $j>k$. This means that the solutions $v^+$ also lie outside $\bC P^\infty$. In practice, this is unproblematic: it is still possible to write the combined conditions in terms of finitely many variables $(x_0^{\pm},\dots,x_k^{\pm})$, as 
\begin{equation} \label{eq:k-equation}
\left\{
\begin{aligned}
& x_0^- + \cdots + x_k^- = 0, \\
& x^-_0 = x_0^+, \\
& x^-_1 = x_1^+ - \theta x_0^+, \\
& x^-_2 = x_2^+ - \theta x_1^+, \\
& \dots \\
& x^-_k = x_k^+ - \theta x_{k-1}^+, \\
& x_0^+ \neq 0, \\
& x^-_k \neq 0.
\end{aligned}
\right.
\end{equation}
Define $R_k^{p,\theta}$ to be the compactification of the space of solutions of \eqref{eq:half-trajectories} and \eqref{eq:match-2} by broken trajectories. This is smooth for any $\theta$, and satisfies
\begin{equation} \label{eq:boundary-theta}
\partial R_k^{p,\theta} = \Big( \bigcup_{k_1+k_2 = k} P_{k_1} \times R_{k_2}^{p,\theta} \Big)
\cup \Big( \bigcup_{k_1+k_2 = k} R_{k_1}^{p,\theta} \times P_{k_2} \Big).
\end{equation}
If we set $\theta = 0$, \eqref{eq:match-2} reduces to the original \eqref{eq:match-1}, and correspondingly $R_k^{p,\theta=0} = Q_k^p$. On the other hand, for $\theta = 1$ the sum of all equations in \eqref{eq:k-equation} says that $x_k^+ = 0$, hence $x_j^+ = 0$ for all $j > k$ and we land back in $\bC P^\infty$.

We want to introduce another parameter-dependent set of coincidence conditions:
\begin{equation} \label{eq:match-3}
\left\{
\begin{aligned}
& x_0^- + \eta x_1^- + \eta^2 x_2^- + \cdots = 0, \\
& x_1^-  + \eta x_2^- + \eta^2 x_3^- + \cdots = -x_0^+, \\
& x_2^- + \eta x_3^- + \eta^2 x_4^- + \cdots = -x_1^+, \\
& \dots
\end{aligned}
\right.
\end{equation}
This has the same property as before: if $x_0^+ = 0$, then a linear combination of the first two equations in \eqref{eq:match-3} shows that $x_0^- = 0$, and the remaining equations reproduce the original ones up to index shift. The analogue of \eqref{eq:k-equation} is
\begin{equation}
\left\{
\begin{aligned}
& x^-_0 + \eta x_1^- + \cdots + \eta^k x_k^- = 0, \\
& x_1^- + \eta x_2^- + \cdots + \eta^{k-1} x_k^- = -x_0^+, \\
& \dots \\
& 0 = -x_k^+, \\
& x^+_0 \neq 0, \\
& x^-_k \neq 0.
\end{aligned}
\right.
\end{equation}
(The remaining coordinates $x_j^+$, $j > k$, are also always zero.) One defines spaces $R_k^{p,\eta}$ as before, and those satisfy the analogue of \eqref{eq:boundary-theta}. If we set $\eta = 1$, then \eqref{eq:match-3} becomes equivalent to the $\theta = 1$ case of \eqref{eq:match-2}, so $R_k^{p,\eta=1} = R_k^{p,\theta=1}$. On the other hand, if we set $\eta = 0$, \eqref{eq:match-3} says that $x_0^- = 0$, and that the rest of $x^-$ agrees with $x^+$ up to index shift (and a $-1$ sign, which is of course irrelevant in projective space), so $R_k^{p,\eta = 0} = P_{k-1}^p$.

To define $R_k^p$, one takes the union of $R_k^{p,\theta}$ for $\theta$ lying on a path in the complex plane from $0$ to $1$; the same for $R_k^{p,\eta}$ and another path of the same kind; and the two pieces are then glued together by identifying $\theta = 1$ and $\eta = 1$ (to make the smooth structures match up, one chooses paths whose derivatives to all orders vanish as one approaches the endpoint $\theta = 1$ or $\eta = 1$).
\end{proof}

Taking circle bundles into account yields the following analogous statement:

\begin{lemma} \label{th:break-2}
There is a sequence of compact manifolds with corners $R_k^s$, $k>0$, satisfying
\begin{equation}
\partial R_k^s = Q_k^s \cup P_{k-1}^s \cup \Big( \bigcup_{k_1+k_2 = k} P_{k_1} \times R_{k_2}^s \Big) \cup \Big( \bigcup_{k_1+k_2=k} R_{k_1}^s \times P_{k_2} \Big),
\end{equation}
and these identifications are compatible with \eqref{eq:boundary-pk-s}, \eqref{eq:boundary-pk-s2}.
\end{lemma}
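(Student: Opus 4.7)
The plan is to build $R_k^s$ as a pullback circle bundle over $R_k^p$. Specifically, I will construct a natural evaluation map $e \colon R_k^p \to \bC P^\infty$ and then set $R_k^s := e^\ast S^\infty$, using the Hopf bundle $q \colon S^\infty \to \bC P^\infty$. Because $Q_k^s$, $P_{k-1}^s$, and the $P_l^s$ are all themselves defined as pullbacks of $q$ along corresponding point-evaluation maps, the boundary decomposition \eqref{eq:break-1} for $R_k^p$ will automatically lift to the desired one for $R_k^s$, provided $e$ restricts on each boundary stratum to the evaluation used to define that stratum's circle bundle.

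Building $e$ proceeds by mimicking the construction of $R_k^p$ itself. On the intermediate spaces $R_k^{p,\theta}$ and $R_k^{p,\eta}$, I set $e(v^-,v^+) = [x^-] \in \bC P^\infty$, where $x^- = v^-(0)$. This is well-defined throughout the family even though $x^+$ may leave $\bC^\infty$ for $\theta > 0$, since $x^-$ always has only finitely many nonzero coordinates. The map extends continuously to broken trajectories by recording the $x^-$-coordinate of the component carrying the half-trajectory data, and I glue the two one-parameter families along $\theta = 1 = \eta$ to produce $e$ on all of $R_k^p$. I then verify the required boundary restrictions of $e$:
\begin{itemize}
\item At the $\theta = 0$ stratum $Q_k^p$, the matching conditions \eqref{eq:match-1} force $v^-(0) = v^+(0) \in H$, so $e$ agrees with the evaluation $v \mapsto v(0)$ that defines $Q_k^s$; hence $e^\ast S^\infty|_{Q_k^p} = Q_k^s$.
\item At the $\eta = 0$ stratum, the analysis following \eqref{eq:match-3} shows that $[x^-] = \sigma([x^+])$, which under the shift identification of this stratum with $P_{k-1}^p$ is precisely the evaluation defining $P_{k-1}^s$ as a circle bundle over $P_{k-1}^p$.
\item On product strata $P_{k_1} \times R_{k_2}^p$ and $R_{k_1}^p \times P_{k_2}$, the map $e$ depends only on the factor retaining the half-trajectory data, so the pullback becomes $P_{k_1} \times R_{k_2}^s$ and $R_{k_1}^s \times P_{k_2}$ respectively.
\end{itemize}

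The main technical obstacle is showing that $e$ extends smoothly across the gluing locus $\theta = 1 = \eta$, where the two families $R_k^{p,\theta}$ and $R_k^{p,\eta}$ meet inside $R_k^p$. Since Lemma \ref{th:break-1} arranged the smooth structure on $R_k^p$ using interpolation paths whose derivatives vanish to all orders at that endpoint, the two local formulas for $e$ agree to infinite order there, so $e$ is smooth. Consequently $R_k^s = e^\ast S^\infty$ is a compact smooth manifold with corners, and the identifications of its boundary strata are automatically compatible with \eqref{eq:boundary-pk-s} and \eqref{eq:boundary-pk-s2}, since those are also realized as pullbacks of $q$ along the appropriate evaluation maps.
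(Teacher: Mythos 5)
Your construction is essentially the paper's: pulling back $S^\infty \to \bC P^\infty$ along the $[x^-]$-evaluation is exactly what the paper describes as ``additionally choos[ing] a preimage $w^- \in S^\infty$ of $[x^-]$,'' and both arguments use the shift $\sigma$ to identify the $\eta=0$ stratum with $P_{k-1}^s$. One small correction: across the gluing locus $\theta=1=\eta$ the two local formulas for $e$ literally coincide (both record $[v^-(0)]$ on what is, by construction, the same space $R_k^{p,\theta=1}=R_k^{p,\eta=1}$), so no infinite-order matching argument is needed -- the vanishing-derivative paths serve only to make the smooth structure on $R_k^p$ itself well-defined, after which $e$ is smooth simply because it is smooth on each fiber and constant in the gluing parameter.
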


\begin{proof}
The proof is as before, only requiring minimal clarifications. To define the counterpart of $R_k^{s,\theta}$, one starts with a point $R_k^{p,\theta}$ and additionally chooses a preimage $w^- \in S^\infty$ of $[x^-] \in \bC P^\infty$ (this choice avoids the problem of $x^+$ not lying in $\bC P^\infty$ for general $\theta$). The same applies to $R_k^{s,\eta}$, and for $\eta = 0$ one can shift coordinates to obtain a preimage of $[x^+]$, which is used in the identification of that space with $P_{k-1}^s$. 
\end{proof}

From the proofs of these two Lemmas, one sees that the interpolating spaces are in fact fibrations over the parameter space (consisting of $\theta$ or $\eta$). Using a version of Ehresmann's theorem for manifolds with corners (whose proof follows the same strategy as for closed manifolds), one concludes that
\begin{align} 
\label{eq:trivial-topology}
& Q_k^p \iso P_{k-1}^p, && Q_k^s \iso P_{k-1}^s, \\
\label{eq:trivial-topology-2}
& R_k^p \iso [0,1] \times P_{k-1}^p, && R_k^s \iso [0,1] \times P_{k-1}^s.
\end{align}
One could choose such diffeomorphisms for all $k$, so that they are compatible with the recursive nature of the boundary strata (however, they are still non-canonical). There can be no analogue of \eqref{eq:trivial-topology} relating $Q_k^b$ and $P_{k-1}^b$, since the topologies differ even in lowest nontrivial dimension (in fact, the combinatorial structures of the boundary are also different, hence one can't even have a sensible cobordism type statement). Instead, we will determine the topology of $Q_k^b$ directly:

\begin{lemma} \label{th:s-cobordism-3}
For $k>1$, $Q_k^b$ is homeomorphic to $S^1 \times S^1 \times D^{2k-2}$.
\end{lemma}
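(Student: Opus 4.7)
My plan is to determine the interior of $Q_k^b$ explicitly and then run the $s$-cobordism argument from Lemma \ref{th:s-cobordism}.

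Since the restriction $q|B: B \setminus \partial B \to \bC P^\infty \setminus H$ is a diffeomorphism, an interior point of $Q_k^b$ is uniquely determined by an unbroken parametrized trajectory $v$ with $v(0) \notin H$ (the unique lift $w \in B \setminus \partial B$ is then forced). Working in the affine chart $w_0 = 1$, the interior of $P_k^p$ was identified in the proof of Lemma \ref{th:s-cobordism} with $\{(w_1,\dots,w_k) \in \bC^k : w_k \neq 0\}$, and the extra condition $v(0) \notin H$ now reads $1 + w_1 + \cdots + w_k \neq 0$. Hence the interior of $Q_k^b$ is the complement of two affine hyperplanes in $\bC^k$ meeting transversally in codimension two. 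Taking linear coordinates $a = w_k$, $b = 1 + w_1 + \cdots + w_k$, $w_1, \dots, w_{k-2}$ on $\bC^k$, this complement becomes $\bC^* \times \bC^* \times \bC^{k-2} \iso S^1 \times S^1 \times \bR^{2k-2}$.

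For $k \geq 3$, so that $\dim Q_k^b = 2k \geq 6$, I then imitate the strategy of Lemma \ref{th:s-cobordism}: after smoothing the corners of $Q_k^b$, remove from the interior a large compact piece diffeomorphic to $S^1 \times S^1 \times D^{2k-2}$, so as to obtain a $2k$-dimensional cobordism between $S^1 \times S^1 \times S^{2k-3}$ and $\partial Q_k^b$ whose fundamental group is $\bZ^2$. Because $Wh(\bZ^2) = 0$ by Bass--Heller--Swan, the $s$-cobordism theorem trivializes the cobordism, and reattaching the removed piece produces the desired homeomorphism $Q_k^b \iso S^1 \times S^1 \times D^{2k-2}$. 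The case $k = 2$ (where $\dim Q_2^b = 4$) lies outside the range of the classical $s$-cobordism theorem and must be treated directly: here I would exploit the explicit low-dimensional pieces from \eqref{eq:low-dim-q} (noting in particular that $Q_2^s \iso P_1^s \iso S^1 \times S^1 \times [0,1]$ via \eqref{eq:trivial-topology}) together with the natural $T^2$-action on the interior by coordinatewise rotation of $(a,b)$, and assemble the boundary strata by hand.

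The main obstacle I anticipate is verifying that the cobordism produced in the $k\geq 3$ step is actually an $h$-cobordism, i.e.\ that the inclusion $\partial Q_k^b \hookrightarrow Q_k^b$ is a homotopy equivalence and in particular that $\partial Q_k^b \simeq S^1 \times S^1$. This requires an inductive analysis of the stratification $\partial Q_k^b = Q_k^s \cup \bigcup (P_{k_1} \times Q_{k_2}^b) \cup \bigcup (Q_{k_1}^b \times P_{k_2})$, using Lemma \ref{th:s-cobordism} for the topology of the $P_{k_i}$ factors, the identification $Q_k^s \iso P_{k-1}^s$ from \eqref{eq:trivial-topology}, and the inductive hypothesis applied to $Q_{k'}^b$ for $k' < k$.
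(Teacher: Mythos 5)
Your argument matches the paper's proof: both identify $Q_k^b \setminus \partial Q_k^b$ with $\bC^* \times \bC^* \times \bC^{k-2}$ (the paper presents this as $\bC P^k$ minus three hyperplanes in general position, i.e.\ $\{w_0=0\}$, $\{w_k=0\}$, $\{\sum_j w_j = 0\}$; you pass to the affine chart $w_0 = 1$ and remove two transverse affine hyperplanes, which is the same computation), and both then run the $s$-cobordism argument of Lemma \ref{th:s-cobordism} for $k > 2$, deferring $k=2$ to a hand check. The $h$-cobordism verification you flag as an anticipated obstacle is left implicit in the paper, both here and already in Lemma \ref{th:s-cobordism}, so while your caution is reasonable it does not indicate a gap relative to the paper's own level of detail.
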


\begin{proof}
By definition,
\begin{equation} \label{eq:int-qk-b}
Q_k^b \setminus \partial Q_k^b = \big\{ w = (w_0,\dots,w_k) \in S^{2k+1} \subset \bC^{k+1} \;:\;
w_0 \neq 0,\; w_k \neq 0, \; \textstyle \sum_j w_j < 0 \big\}.
\end{equation}
This is clearly a quotient of $\bC^{k+1} \setminus \{w_0 = 0 \text{ or } w_k = 0 \text{ or } \sum_j w_j = 0\}$ by the diagonal action of $\bC^*$, hence isomorphic to $\bC P^k \setminus \{\text{three hypersurfaces in general position}\} = \bC^* \times \bC^* \times \bC^{k-2}$. For $k>2$ one can use the $s$-cobordism theorem, as in Lemma \ref{th:s-cobordism}, to derive the result; we omit the case $k = 2$, which has to be settled by hand.
\end{proof}

\begin{lemma} \label{th:real-part}
The real part of $Q_2^b$ (the fixed part for the involution induced by complex conjugation on $\bC^\infty$) has four connected components. Their interiors are distinguished by having points $w$, as in \eqref{eq:int-qk-b} but with real coordinates, with signs 
\begin{equation} \label{eq:char-signs}
(\mathrm{sign}(w_0),\mathrm{sign}(w_2)) = (\pm,\pm).
\end{equation}
Each component is homeomorphic to a disc, and generates $H_2(Q_2^b,\partial Q_2^b) \iso \bZ$. The $(++)$ component is a triangle with
\begin{equation} \label{eq:triangle0}
\left\{
\begin{aligned}
& \text{one side lying in each of: $Q_2^s$, $P_1 \times Q_1^b$, $Q_1^b \times P_1$;} \\
& \text{one corner lying in each of: $P_1 \times Q_0^b \times P_1$, $P_1 \times Q_1^s$, $Q_1^s \times P_1$.}
\end{aligned}
\right.
\end{equation}
\end{lemma}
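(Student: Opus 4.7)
The plan is to begin with the explicit model of $Q_2^b \setminus \partial Q_2^b$ provided in the proof of Lemma \ref{th:s-cobordism-3}, namely $\{(w_0,w_1,w_2) \in S^5 \subset \bC^3 : w_0 \neq 0,\, w_2 \neq 0,\, w_0+w_1+w_2 < 0\}$. Complex conjugation on $\bC^\infty$ acts coordinate-wise, and its fixed set on $S^5$ is the real sphere $S^2 \subset \bR^3$; thus the real part of the interior is the open hemisphere $\{w \in S^2 : w_0+w_1+w_2 < 0\}$ with the two great-circle arcs $\{w_0=0\}$ and $\{w_2=0\}$ deleted. These arcs meet at the single interior point $(0,-1,0)$ (since $w_0=w_2=0$ forces $w_1 = \pm 1$, and $w_1<0$ is needed to lie in the hemisphere) and cut the hemisphere into four open discs labelled by \eqref{eq:char-signs}.

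Next I would identify the boundary arcs and corners of the $(+,+)$ closure with the strata appearing in \eqref{eq:boundary-pk-b2}, by examining which of the three defining inequalities degenerates. Setting $w_0 \to 0$ produces a broken trajectory with break at $c_1$; after the shift \eqref{eq:shift-embedding} the marked point sits on the second (non-constant) piece, placing the arc $\{w_0=0,\, w_2>0\}$ in $P_1 \times Q_1^b$. Symmetrically, $\{w_2=0,\, w_0>0\}$ lies in $Q_1^b \times P_1$, and the arc $\{w_0+w_1+w_2=0,\, w_0,w_2>0\}$, on which the marked point hits $\partial B = S$, lies in $Q_2^s$. The corner $(0,-1,0)$ is a doubly broken trajectory whose marked point sits on the constant middle component, hence lies in $P_1 \times Q_0^b \times P_1$; the other two corners, where one of $\{w_j=0\}$ meets $\{\sum w_j = 0\}$, give broken trajectories whose marked point lies on $S$, hence sit in $P_1 \times Q_1^s$ and $Q_1^s \times P_1$ respectively. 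This establishes \eqref{eq:triangle0}.

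For the four-components count, I would check that different sign regions limit to different points in each corner stratum: the outer $P_1 \iso S^1$ factors are parametrized by the phases \eqref{eq:p1}, and approach from $w_0 > 0$ versus $w_0 < 0$ (respectively $w_2 > 0$ versus $w_2 < 0$) produces real phases differing by $1/2$, hence distinct fixed points of the induced involution on $S^1$. For the homological claim, Lemma \ref{th:s-cobordism-3} gives $Q_2^b \iso S^1 \times S^1 \times D^2$, so Lefschetz duality yields $H_2(Q_2^b, \partial Q_2^b) \iso H^2(Q_2^b) \iso \bZ$, with the generator of $H_2(Q_2^b)$ realized by the compact torus $(S^1)^2$ visible in the identification $Q_2^b \setminus \partial Q_2^b \iso (\bC^*)^2$ from the proof of that lemma. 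Under this identification the four real components become the four quadrants $(\bR_{\pm})^2 \subset (\bC^*)^2$, and any such quadrant meets $(S^1)^2$ transversely in the single point $(\pm 1, \pm 1)$. The algebraic intersection number is therefore $\pm 1$, so each real disc generates $H_2(Q_2^b, \partial Q_2^b)$.

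The main obstacle will be the second paragraph: carefully tracking the shift embedding and the broken-trajectory compactification to verify that each arc and corner really lies in the advertised stratum, and in particular that the real sign data persists through the compactification so that the four closures are pairwise disjoint. Once this bookkeeping is in place, the topological statement is immediate from Lemma \ref{th:s-cobordism-3}, and the homological assertion reduces to a one-line transverse intersection count in the $(\bC^*)^2$ model.
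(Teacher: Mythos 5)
Your argument is correct and runs essentially parallel to the paper's: the explicit real model for the interior of $(Q_2^b)^{\bR}$, the transverse-intersection count against a torus generating $H_2(Q_2^b) \iso \bZ$, and the reading off of sides and corners from which defining inequality degenerates. Your hemisphere-minus-two-arcs picture is the same as the paper's $\bR P^2 \setminus \{\text{three lines}\}$ after passing to the $\bR^*$-quotient, and your $(S^1)^2$-intersection in the $(\bC^*)^2$ model is the paper's torus $\{|w_0|=\epsilon,\,|w_2|=\epsilon\}$.

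One bookkeeping slip, which happens not to affect the conclusion since \eqref{eq:triangle0} is symmetric in the two strata: you have $Q_1^b \times P_1$ and $P_1 \times Q_1^b$ (and correspondingly $Q_1^s \times P_1$ and $P_1 \times Q_1^s$) interchanged. Throughout Section \ref{sec:morse}, in a product $Q_{k_1}^b \times P_{k_2}$ the marked factor $Q_{k_1}^b$ is the broken piece adjacent to $c_k$, i.e.\ the $s \to -\infty$ end, whereas $P_{k_1} \times Q_{k_2}^b$ has the marked piece adjacent to $c_0$. This ordering is made explicit, for example, in the proof of Lemma \ref{th:alphabeta-p}, where the puncture at $\infty$ of $Q_1^b \setminus \partial Q_1^b \iso \bC^* \setminus \{-1\}$ (coordinate $w_1/w_0$, so the puncture at $\infty$ is $w_0 \to 0$) is identified with $Q_0^b \times P_1$, not $P_1 \times Q_0^b$. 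When $w_0 \to 0$ in $Q_2^b$, the marked point $v(0) = [0:w_1:w_2]$ sits on the $c_2 \to c_1$ piece, the first piece, so the side $\{w_0 = 0,\, w_2 > 0\}$ lies in $Q_1^b \times P_1$; it is $\{w_2 = 0,\, w_0 > 0\}$ that lies in $P_1 \times Q_1^b$, since there $v(0) = [w_0:w_1:0]$ is on the $c_1 \to c_0$ piece. The two outer corners are correspondingly swapped from what you wrote, with $\{w_0=0\}\cap\{\textstyle\sum w_j = 0\}$ in $Q_1^s \times P_1$ and $\{w_2=0\}\cap\{\textstyle\sum w_j = 0\}$ in $P_1 \times Q_1^s$.
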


\begin{proof}
By the same argument as in Lemma \ref{th:s-cobordism-3}, the interior of the real part, denoted by $(Q_2^b \setminus \partial Q_2^b)^{\bR}$, is diffeomorphic to $\bR P^2 \setminus \{\text{three real hypersurfaces in general position}\}$. This also immediately yields \eqref{eq:char-signs}. Inspection of the proof of Lemma \ref{th:s-cobordism-3} shows that $H_2(Q_2^b) \iso \bZ$ is generated by the homology class of the torus 
\begin{equation}
\{ |w_0| = \epsilon, |w_2| = \epsilon \} \subset Q_2^b \setminus \partial Q_2^b \quad \text{for sufficiently small $\epsilon>0$.}
\end{equation}
This intersects each component of $(Q_2^b)^{\bR}$ transversally in one point, which implies the desired homological statement. 
The boundary of the $(++)$ component of $(Q_2^b)^{\bR}$ contains exactly one interval which belongs to $(Q_2^s)^{\bR}$. Because of the way in which the boundary components intersect, it then also contains exactly one interval each in $(Q_1^b \times P_1)^{\bR}$ and $(P_1 \times Q_1^b)^{\bR}$. It contains no points belonging to the other codimension one boundary faces $Q_0^b \times P_2$ and $P_2 \times Q_0^b$ (because on those faces, $w_2=-1$ or $w_0=-1$).
\end{proof}
%

Our main application of moduli spaces with evaluation constraints is to construct certain other spaces, which bound our previous $P_{k-1}^s$. Namely, take $Q_k^b$ and $R_k^s$, and glue them together along their common boundary face $Q_k^s$. This yields a sequence of manifolds with corners, denoted by $R_k^b$, satisfying 
\begin{equation} \label{eq:break-3}
\partial R_k^b = P_{k-1}^s \cup \Big( \bigcup_{k_1+k_2 = k} P_{k_1} \times R_{k_2}^b \Big) \cup \Big(
\bigcup_{k_1+k_2 = k} R_{k_1}^b \times P_{k_2} \Big).
\end{equation}
In the lowest-dimensional cases,
\begin{equation}
\begin{aligned}
& R_0^b = Q_0^b = \mathit{point}, \\
& R_1^b = Q_1^b \cup_{Q_1^s} R_k^s = \text{\it pair-of-pants} \cup_{S^1} \mathit{annulus} \iso
\text{\it pair-of-pants}.
\end{aligned}
\end{equation}
In fact, from \eqref{eq:trivial-topology}, \eqref{eq:trivial-topology-2} it follows that $R_k^b \iso Q_k^b$ for all $k$. 

\begin{lemma} \label{th:alphabeta-p}
Suppose that $(\alpha_k)$ and $(\alpha_k^s,\beta_k^s)$ have been chosen, as in Lemmas \ref{th:alpha} and \ref{th:alphabeta-s}. Then, there are smooth maps 
\begin{equation}
\alpha_k^b,\, \beta_k^b: R_k^b \longrightarrow S^1,
\end{equation}
whose restriction to \eqref{eq:break-3} is given by
\begin{equation} \label{eq:middle-alphabeta-p}
\left\{
\begin{aligned}
& \alpha_k^b \,|\, P_{k-1}^s = \alpha_{k-1}^s, \\
&\beta_k^b \,|\, P_{k-1}^s = \beta_{k-1}^s
\end{aligned}
\right.
\end{equation}
as well as
\begin{equation} \label{eq:left-alphabeta-p}
\left\{
\begin{aligned} 
& \alpha_k^b \,|\, (P_{k_1} \times R_{k_2}^b) = \alpha_{k_1} + \alpha_{k_2}^b, \\
& \beta_k^b \,|\, (P_{k_1} \times R_{k_2}^b) = -\alpha_{k_1} + \beta_{k_2}^b,
\end{aligned}
\right.
\end{equation}
and
\begin{equation}
\left\{
\begin{aligned} \label{eq:right-alphabeta-p}
& \alpha_k^b \,|\, (R_{k_1}^b \times P_{k_2}) = \alpha_{k_1}^b + \alpha_{k_2}, \\
& \beta_k^b \,|\, (R_{k_1} ^b \times P_{k_2}) = \beta_{k_1}^b.
\end{aligned}
\right.
\end{equation}
\end{lemma}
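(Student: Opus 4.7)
I would follow the two-proof pattern of Lemma~\ref{th:alphabeta-s}, and present the inductive topological construction (the analogue of the second proof); the direct geometric approach via parallel transport is cumbersome because $R_k^b$ contains the interpolation piece $R_k^s$, where the trajectory has been broken into two half-pieces meeting at twisted endpoints. The base case $k=0$ is trivial since $R_0^b$ is a point, and one simply assigns constants $\alpha_0^b=0$, with $\beta_0^b$ left free for the moment. For $k=1$, $R_1^b$ is a pair of pants with boundary $P_0^s \cup (P_1 \times R_0^b) \cup (R_0^b \times P_1)$, and the prescribed data on these three circles extends to a map $R_1^b \to S^1 \times S^1$ iff the signed sum of degrees vanishes in each of the two components. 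This reduces to a degree count in the spirit of Lemma~\ref{th:01-s} (using the orientation data of Lemma~\ref{th:orientations}), which can be arranged by suitable choice of $\beta_0^b$.

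For the inductive step with $k \geq 2$, assume $\alpha_{k'}^b, \beta_{k'}^b$ have been constructed compatibly for all $k' < k$. Then \eqref{eq:middle-alphabeta-p}, \eqref{eq:left-alphabeta-p}, \eqref{eq:right-alphabeta-p} together with the fixed $\alpha_j, \alpha_j^s, \beta_j^s$ prescribe a smooth map $\partial R_k^b \to S^1 \times S^1$. By the remark following \eqref{eq:break-3} we have $R_k^b \iso Q_k^b$, and Lemma~\ref{th:s-cobordism-3} identifies this with $S^1 \times S^1 \times D^{2k-2}$ for $k > 1$. Hence $R_k^b$ has the homotopy type of $K(\bZ^2, 1)$, and by obstruction theory the extension exists provided the class in $H^1(\partial R_k^b; \bZ \oplus \bZ)$ determined by the boundary data lies in the image of $H^1(R_k^b; \bZ \oplus \bZ)$. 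For $k \geq 3$ the pair $(R_k^b, \partial R_k^b)$ is $2$-connected by the $s$-cobordism-type argument used in Lemmas~\ref{th:s-cobordism} and~\ref{th:s-cobordism-3}, so the extension is automatic.

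The hardest step is the verification at $k = 2$, where $\dim R_2^b = 4$ and the pair is not automatically $2$-connected. Here the homological condition must be checked by hand, using the explicit description of $Q_2^b$ given by Lemma~\ref{th:real-part} together with the fact that $R_2^s$ is homeomorphic to a cylinder over $P_1^s$ via \eqref{eq:trivial-topology-2}. Concretely, one must verify that the degrees of $\alpha_2^b, \beta_2^b$ prescribed on $\partial R_2^b$ agree with the pullback of the two generators of $H^1(R_2^b; \bZ) \iso \bZ^2$; this is a direct orientation computation parallel to the role played by Lemma~\ref{th:112} in the second proof of Lemma~\ref{th:alpha}, and I expect it to be mildly tedious but straightforward once the orientations from Lemma~\ref{th:orientations} are threaded through the gluing $R_2^b = Q_2^b \cup_{Q_2^s} R_2^s$.
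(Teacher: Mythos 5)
Your overall strategy matches the paper's: direct check at $k=1$, a hands-on verification at $k=2$ using the real locus of $Q_2^b$ and Lemma~\ref{th:real-part}, and an automatic extension for $k\geq 3$ via the $2$-connectivity of $(R_l^b,\partial R_l^b)$ which follows from $R_l^b \iso Q_l^b$ and Lemma~\ref{th:s-cobordism-3}. But there is a real gap at $k=2$, and a minor confusion at $k=1$.

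At $k=1$: you suggest that a ``suitable choice of $\beta_0^b$'' is what makes the degree count on $\partial R_1^b$ work out. But $R_0^b$ is a point, so $\beta_0^b$ is a constant and changing it does not affect any degree. The degree count on the three boundary circles of $R_1^b$ vanishes automatically from the prescriptions \eqref{eq:middle-alphabeta-p}--\eqref{eq:right-alphabeta-p} and the orientation comparison with $P_0^s$ and the two $P_1$ factors; no adjustment is needed at this stage.

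The genuine gap is at $k=2$. You correctly identify that one must check that the prescribed map $\partial R_2^b \to S^1 \times S^1$ extends, i.e.\ that its class vanishes against the generator of $H_2(R_2^b,\partial R_2^b)$ constructed from the $(++)$ component in Lemma~\ref{th:real-part}. But you treat this as a computation whose outcome is already determined (``mildly tedious but straightforward''), whereas in fact the obstruction is \emph{not} automatically zero for an arbitrary choice of $\alpha_1^b,\beta_1^b$ at the previous stage. The key point of the argument is that the two sides of the triangle lying in $P_1 \times R_1^b$ and $R_1^b \times P_1$ project to paths that together generate $H_1(R_1^b,\partial R_1^b) \iso \bZ^2$; consequently the residual freedom left over from the $k=1$ step --- modifying $\alpha_1^b,\beta_1^b$ by maps $R_1^b \to S^1$ that vanish on $\partial R_1^b$ --- acts transitively on the possible obstruction classes, and one can therefore always kill the obstruction by such a modification. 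Without invoking this freedom and this specific homological fact, the extension at $k=2$ is not guaranteed, so the proof does not close as written.
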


\begin{proof}
In view of the more complicated construction of the $R_k^b$, we will use an abstract topological argument, along the line of the second proofs of Lemmas \ref{th:alpha} and \ref{th:alphabeta-s}.

Let's start with $k = 1$. Equip the interior of $Q_1^b$, thought of as in \eqref{eq:3-punctured}, with its complex orientation. The boundary circle corresponding to the puncture at $-1$ is identified with $Q_1^s$, but its boundary orientation is the opposite of the natural orientation of $Q_1^s$ (by which we mean, the orientation $Q_1^s \iso S^1$ inherits from being a fibre of $S^\infty \rightarrow \bC P^\infty$). By definition, $R_1^b$ is obtained by attaching $R_1^s$ to that boundary circle of $R_1^b$. Now, $R_1^s$ is a circle bundle over the interval $R_1^p$. Hence, the previous observation carries over: the boundary orientation of $P_0^s \subset \partial R_1^b$ is opposite to its natural orientation. The same orientation behaviour appears at the boundary circle coming the puncture at $0$ in \eqref{eq:3-punctured}, which corresponds to $P_1 \times Q_0^b = P_1 \times R_0^b \subset \partial R_1^b$. In contrast, on the boundary circle coming from the puncture at $\infty$, which corresponds to $Q_0^b \times P_1 = R_0^b \times P_1 \subset \partial R_1^b$, the two orientations agree. With that taken into account, the condition \eqref{eq:middle-alphabeta-p} says that on $P_0^s \subset \partial R_1^b$ (equipped with its boundary orientation), $\alpha_1^p$ has degree $0$, while $\beta_1^p$ has degree $-1$. From \eqref{eq:left-alphabeta-p} one gets that on $P_1 \times R_0^b$ (again, equipped with its boundary orientation), $\alpha_1^p$ has degree $-1$, and $\beta_1^p$ has degree $1$. Similarly by \eqref{eq:right-alphabeta-p}, on $R_0^b \times P_1$, $\alpha_1^p$ has degree $1$, while $\beta_1^p$ has degree 0. Given functions on the boundary with these properties, one can therefore extend them to all of $R_1^b$. 

By the recursive conditions, these choices determine the values of our functions on $\partial R_2^b \iso (S^1)^3$. A generator of $H_2(R_2^b,\partial R_2^b)$ can be constructed as in Lemma \ref{th:real-part}. Namely, take the $(++)$ component of $(Q_2^b)^{\bR}$, and then attach to part of its boundary the corresponding component of $(R_2^s)^{\bR}$ (this makes sense provided that $R_2^s$ carries an appropriate real involution; this can be ensured by taking the parameters $\theta$ and $\eta$ from the proof of Lemma \ref{th:break-1}, or rather their counterparts in Lemma \ref{th:break-2}, to lie on the real axis). The outcome is a triangle in $R_2^b$ which, because of \eqref{eq:triangle0}, has:
\begin{equation} \label{eq:triangle1}
\left\{
\begin{aligned}
& \text{one side each lying in $P_1^s$, $P_1 \times R_1^b$ and $R_1^b \times P_1$;} \\
& \text{one corner each lying in $P_1 \times R_0^b \times P_1$, $P_1 \times P_0^s$, $P_0^s \times P_1$.}
\end{aligned}
\right.
\end{equation}
If we take the two last-mentioned sides in \eqref{eq:triangle1} and project them to the $R_1^b$ factors, we get two paths, one going from $R_0^b \times P_1$ to $P_0^s$, and the other from $P_1 \times R_0^b$ to $P_0^s$. Because all three boundary circles of $R_1^b$ appear in this way, our two paths generate $H_1(R_1^b,\partial R_1^b) \iso \bZ^2$. Now let's go back to the previous step: when defining $\alpha_1^b$ and $\beta_1^b$, we were free to add arbitrary functions $R_1^b \rightarrow S^1$ which vanish on the boundary. By modifying our choice in such a way, one can always achieve that $\alpha_2^b$ and $\beta_2^b$ have degree zero along the boundary of our triangle. This is a necessary and sufficient condition for extendibility to $R_2^b$.

Finally, suppose that, for some $l \geq 3$, we have defined $(\alpha_k^b, \beta_k^b)$ for all $k<l$, with the desired properties. This determines the values of $(\alpha_l^b,\beta_l^b)$ on $\partial R_l^b$. By construction, $R_l^b \iso Q_l^b$. From this and Lemma \ref{th:s-cobordism-3}, one sees that $(R_l^b,\partial R_l^b)$ is $2$-connected; hence, the extensions of our functions over $R_l^b$ is always possible.
\end{proof}

We will also we need to consider orientation issues for the higher-dimensional moduli spaces. Equip $Q_k^p$ with their complex orientations. For the circle bundles $Q_k^s \rightarrow Q_k^p$, we then choose orientations in the same way as in \eqref{eq:circle-orientation-sequence}. Choose orientations of $Q_k^b$ which are compatible with those of $Q_k^s \subset \partial P_k^s$, and extend them to orientations of $R_k^b$. Then, we have the following statement, whose proof we omit:

\begin{lemma} \label{th:r-orientation}
The orientations of $R_k^b$ are compatible with the orientations of the boundary faces $P_{k-1}^s$. The same holds for boundary faces $P_{k_1} \times R_{k_2}^b$, while for those of the form $R_{k_1}^b \times P_{k_2}$ the orientations are opposite.
\end{lemma}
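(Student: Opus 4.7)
The plan is to split $R_k^b$ as $Q_k^b \cup_{Q_k^s} R_k^s$ (following the construction just before the lemma), verify orientation compatibility separately on the boundary strata of $Q_k^b$ and of $R_k^s$, and then check that the gluing along $Q_k^s$ is orientation-consistent and produces the claimed signs on $\partial R_k^b$.

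For $Q_k^b$: since $Q_k^p$ is complex-oriented, $Q_k^s \to Q_k^p$ is oriented as a circle bundle via the obvious analogue of \eqref{eq:circle-orientation-sequence}, and $Q_k^b$ is oriented compatibly with $Q_k^s \subset \partial Q_k^b$, the same short-exact-sequence reordering argument used in the proof of Lemma \ref{th:orientations}(iv) applies verbatim. It yields: $Q_k^s$ inherits its chosen orientation as a boundary face of $Q_k^b$; $P_{k_1}\times Q_{k_2}^b$ inherits the product orientation; and $Q_{k_1}^b \times P_{k_2}$ inherits the \emph{opposite} of the product orientation, with the sign being the Koszul factor $(-1)^{\dim(P_{k_1})} = -1$.

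For $R_k^s$: orient $R_k^s$ by inserting the 1-dimensional parameter direction from the interpolation of Lemma \ref{th:break-2} as the first coordinate. Choose the parameter direction so that the induced boundary orientation on $Q_k^s \subset \partial R_k^s$ (at the $\theta = 0$ end) is \emph{opposite} to its orientation as a boundary face of $Q_k^b$; this is exactly the condition that makes the gluing define a coherent orientation on $R_k^b$. One then checks that the induced boundary orientation on $P_{k-1}^s \subset \partial R_k^s$ (at the $\eta = 0$ end) agrees with the natural orientation of $P_{k-1}^s$. The product strata $P_{k_1}\times R_{k_2}^s$ and $R_{k_1}^s \times P_{k_2}$ obey the same sign pattern as in Lemma \ref{th:orientations}(iv), with the Koszul factor $(-1)^{\dim(P_{k_1})} = -1$ appearing only on the right-factor strata.

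Combining the two steps, $R_k^b$ is coherently oriented and its boundary decomposition inherits compatible orientations on $P_{k-1}^s$ and on $P_{k_1}\times R_{k_2}^b$, with opposite orientation on $R_{k_1}^b \times P_{k_2}$, as claimed. The main obstacle is the parameter-direction bookkeeping for $R_k^s$: a single orientation convention on the $\theta$- and $\eta$-pieces of the parameter path must produce simultaneously the correct sign on both ends ($Q_k^s$ opposite to the $Q_k^b$ side, $P_{k-1}^s$ compatible) and be consistent across the gluing at $\theta = \eta = 1$. This reduces to a direct sign check at that gluing value, together with the observation that, at $\eta = 0$, $R_k^s$ degenerates onto $P_{k-1}^s$ with the parameter contributing precisely the outward-normal $\bR$-summand needed to match boundary orientations.
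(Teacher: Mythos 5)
Since the paper omits its own proof, there is no canonical argument to compare against. Your decomposition of $R_k^b$ into $Q_k^b \cup_{Q_k^s} R_k^s$ is the natural one, and the overall scheme is sound, but there are two issues worth flagging.

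The claim that the proof of Lemma \ref{th:orientations}(iv) ``applies verbatim'' to $Q_k^b$ is imprecise. In that lemma, the orientation of $P_k^b$ comes from the sequence $0 \to \bC \to TP_k^b \to TP_k^p \to 0$, i.e.\ from viewing $P_k^b$ as a disc bundle over the complex-oriented $P_k^p$, and compatibility of the boundary face $P_k^s$ is a \emph{conclusion} of the argument. Here $Q_k^b \to Q_k^p$ is not a disc bundle (its interior maps generically one-to-one onto an open subset of $P_k^p$), so there is no corresponding $\bC$-sequence, and the orientation of $Q_k^b$ is instead \emph{imposed} by the compatibility requirement with $Q_k^s$; in fact it is the opposite of the complex orientation, as the proof of Lemma \ref{th:alphabeta-p} records for $k=1$. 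Your sign claims for $P_{k_1}\times Q_{k_2}^b$ and $Q_{k_1}^b\times P_{k_2}$ are nonetheless correct, but to get them one should argue differently---for instance by $\partial^2 = 0$ coherence at the codimension-$2$ strata $P_{k_1}\times Q_{k_2}^s$ and $Q_{k_1}^s\times P_{k_2}$, which are shared between $Q_k^s$ and the respective product faces, together with the (iii)-analogue saying that $Q_k^s$ has both types of boundary faces compatibly oriented.

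The more substantial point is the assertion that the boundary orientation on $P_{k-1}^s$ at the $\eta = 0$ end of $R_k^s$ agrees with the natural one. This is where the real content of the lemma lies, and you flag it as ``the main obstacle'' without actually carrying it out. It requires verifying that the identification $Q_k^s \iso P_{k-1}^s$ produced by the cobordism $R_k^s$ of \eqref{eq:trivial-topology} matches the two natural circle-bundle orientations in the right way, which is not a purely formal matter since that isomorphism is non-canonical. The paper settles it implicitly for $k=1$ inside the proof of Lemma \ref{th:alphabeta-p}: there, working with the complex orientation of $Q_1^b$, the conclusion is that $P_0^s \subset \partial R_1^b$ inherits the opposite of its natural orientation, hence---after the global sign flip between the complex orientation and the one adopted in this lemma---the compatible one. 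For $k \ge 2$ your proposal leaves precisely this computation undone.
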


Finally, we need to discuss the analogues for our moduli spaces of the tautological families \eqref{eq:tautological}. Clearly, the spaces $Q_k^p$, $Q_k^s$ and $Q_k^b$ each carry such a family, with a distinguished section as in \eqref{eq:canonical-section}, and with the usual kind of compactification. Slightly less obviously, the same holds for the spaces $R_k^p$, $R_k^s$ and $R_k^b$, except that the total spaces of the tautological families no longer come with maps to $\bC P^\infty$. Take for instance one of the spaces $R_k^\theta$ appearing in the proof of Lemma \ref{th:break-1}, and a point in its interior, represented by a pair of half-flow-lines $(v^-,v^+)$. Over this point, the fibre of the tautological family can be a point on either half-flow line, which means either $s^- \in (-\infty,0]$ or $s^+ \in [0,\infty)$, with the convention that we identify $s^- = 0$ with $s^+ = 0$; but the evaluation maps $v^-(s^-)$ and $v^+(s^+)$ fail to respect that identification, for $\theta \neq 0$. The tautological family over $R_k^p$ restricts to that for $Q_k^p$ on the appropriate boundary face. One then defines the (most complicated) family over $R_k^b$ by gluing together those on $Q_k^b$ and $R_k^s$, just as in the definition of the space $R_k^b$ itself.

\section{Floer cohomology background\label{sec:floer}}

This section reviews Hamiltonian Floer cohomology and some of its properties, selected with a view to their usefulness later on. The technical choices made in presenting the construction largely follow classical models, specifically \cite{salamon-zehnder92, floer-hofer-salamon94, hofer-salamon95, piunikhin-salamon-schwarz94}.

\subsection{Geometric setup}
Let $(M,\omega)$ be a $2n$-dimensional compact symplectic manifold with boundary. We assume that \eqref{eq:cy} holds. We also fix oriented codimension two submanifolds $\Omega_1,\dots,\Omega_j \subset M$ (which are allowed to have boundary on $\partial M$), and multiplicities $m_1,\dots,m_j \in A$, such that the cycle $\Omega = m_1 \Omega_1 + \cdots + m_j \Omega_j$ satisfies 
\begin{equation} \label{eq:omega-cycle}
[\Omega] = m_1 [\Omega_1] + \cdots + m_j [\Omega_j] \in H^2(M;A) \longmapsto 
[\omega] \in H^2(M;\bR).
\end{equation}
On the complement of $\Omega$, there is a one-form $\theta$ satisfying $d\theta = \omega$, and with the following property. Whenever $S$ is a compact oriented surface with boundary, and $u: S \rightarrow M$ a map such that $u(\partial S) \cap \Omega = \emptyset$, then
\begin{equation} \label{eq:energy-omega}
\int_S u^*\omega = u \cdot \Omega + \int_{\partial S} u^*\theta.
\end{equation}
That concludes the topological part of our setup, and we now turn to Hamiltonian dynamics and holomorphic curve theory. We assume that $M$ comes with a function $\scrH$ such that:
\begin{equation} \label{eq:h}
\parbox{35em}{$\scrH$ is locally constant on $\partial M$, with the gradient pointing outwards; in particular, there are no critical points on $\partial M$.}
\end{equation}
Let $\scrX$ be the Hamiltonian vector field of $\scrH$. We also assume that $M$ comes with a compatible almost complex structure $\scrJ$, such that the following holds:
\begin{equation} \label{eq:convexity}
\parbox{35em}{
$\partial M$ is weakly Levi convex with respect to $\scrJ$. This means that $-d(d\scrH \circ \scrJ)$ is nonnegative on each $\scrJ$-complex line in $T(\partial M)$. Additionally, we assume that $L_{\scrX} (d\scrH \circ \scrJ)$ vanishes along the boundary.
}
\end{equation}
The use of this kind of convexity condition in pseudoholomorphic curve theory is classical, but for convenience, we will describe its implication in a basic form:

\begin{lemma} \label{th:levi}
Let $S$ be a connected Riemann surface, with complex structure $j$, equipped with a one-form $\beta \in \Omega^1(S,\bR)$ such that $d\beta \leq 0$. Consider maps $u: S \rightarrow M$ which satisfy
\begin{equation} \label{eq:dbar}
(Du - \scrX \otimes \beta)^{0,1} = \half (Du + \scrJ \circ Du \circ j - \scrX \otimes \beta -
\scrJ\scrX \otimes \beta \circ j) = 0.
\end{equation}
If such a map meets $\partial M$, it must be entirely contained in it.
\end{lemma}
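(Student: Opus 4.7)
The plan is to apply the strong maximum principle to $\phi := \scrH\circ u$ on $S$. By \eqref{eq:h}, $\partial M$ consists of components of level sets of $\scrH$, and $\scrH$ attains a local maximum there since $\nabla\scrH$ points outward; hence any $z_0\in S$ with $u(z_0)\in\partial M$ is a local maximum of $\phi$. It will suffice to show that $\phi$ is constant on a neighborhood of such a $z_0$, since then $u^{-1}(\partial M)$ is both open and closed in the connected space $S$. As a first step I would compute $d(d\phi\circ j)$: rearranging \eqref{eq:dbar} as $Du\circ j = \scrJ\circ Du - \scrJ\scrX\otimes\beta + \scrX\otimes(\beta\circ j)$, and using $d\scrH(\scrX)=0$ together with $d\scrH(\scrJ\scrX) = -|\scrX|^2$ (in the convention $\iota_\scrX\omega = -d\scrH$, so that $-\scrJ\scrX=\nabla\scrH$), one finds $d\phi\circ j = u^*(d\scrH\circ\scrJ) + |\scrX|^2\beta$, and therefore
\begin{equation*}
d(d\phi\circ j) \;=\; u^*d(d\scrH\circ\scrJ) \;+\; u^*d(|\scrX|^2)\wedge\beta \;+\; |\scrX|^2\,d\beta.
\end{equation*}

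Next I would analyze this $2$-form at the contact point $z_0$. Since $d\phi_{z_0}=0$, the image $Du_{z_0}(T_{z_0}S)$ lies in $\ker d\scrH = T(\partial M)$. Because \eqref{eq:dbar} says $Du - \scrX\otimes\beta$ is $(j,\scrJ)$-linear, and $\scrJ\scrX$ is transverse to $\partial M$, one obtains, for any oriented orthonormal basis $(e_0, je_0)$ of $T_{z_0}S$, a decomposition $Du_{z_0}(e_0) = \beta(e_0)\scrX + \xi$ and $Du_{z_0}(je_0) = \beta(je_0)\scrX + \scrJ\xi$, with $\xi$ in the maximal $\scrJ$-complex subspace of $T(\partial M)$. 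Cartan's formula yields
\begin{equation*}
L_\scrX(d\scrH\circ\scrJ) \;=\; \iota_\scrX d(d\scrH\circ\scrJ) + d\bigl(d\scrH(\scrJ\scrX)\bigr) \;=\; \iota_\scrX d(d\scrH\circ\scrJ) - d(|\scrX|^2),
\end{equation*}
so the hypothesis $L_\scrX(d\scrH\circ\scrJ)|_{\partial M}=0$ translates into $\iota_\scrX d(d\scrH\circ\scrJ) = d(|\scrX|^2)$ along $\partial M$. Substituting this into the bilinear expansion of $u^*d(d\scrH\circ\scrJ)(e_0, je_0)$, the mixed $\scrX$--$\xi$ cross terms cancel exactly against $u^*d(|\scrX|^2)\wedge\beta\,(e_0, je_0)$, leaving
\begin{equation*}
d(d\phi\circ j)(e_0, je_0)\big|_{z_0} \;=\; d(d\scrH\circ\scrJ)(\xi,\scrJ\xi) \;+\; |\scrX|^2\,d\beta(e_0,je_0) \;\leq\; 0,
\end{equation*}
by the weak Levi convexity (for the first summand) and $d\beta\leq 0$ (for the second). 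In local conformal coordinates this reads $\Delta\phi(z_0)\geq 0$.

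To conclude, I would promote this pointwise inequality at contact points into a linear differential inequality $\Delta\phi + b(z)\cdot d\phi \geq 0$ on a full neighborhood of $z_0$. The cancellation above is exact on $\partial M$; off $\partial M$, the failure of the $\iota_\scrX$-identity and the defect of $\mathrm{Im}(Du)$ from tangency to $\partial M$ are both controlled by $|d\phi|$, yielding the desired subsolution inequality with bounded coefficients. E.~Hopf's strong maximum principle then forces $\phi\equiv \scrH(u(z_0))$ on a neighborhood of $z_0$, so $u$ maps this neighborhood into $\partial M$, and the opening argument gives the conclusion. The main obstacle is precisely the exact cancellation of cross terms: \eqref{eq:dbar} only controls $Du$ modulo $\scrX\otimes\beta$, and the $\scrX$-contribution produces mixed terms that would destroy the naive pseudoconvexity estimate. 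Their elimination rests entirely on the compatibility condition $L_\scrX(d\scrH\circ\scrJ)|_{\partial M}=0$, which is exactly the role of the second clause of \eqref{eq:convexity}.
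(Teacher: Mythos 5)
Your approach is a ``hands-on'' unrolling of the paper's proof: where the paper turns \eqref{eq:dbar} into an honest $\tilde{\scrJ}$-holomorphic map into $\tilde{M} = S\times M$, computes the Levi form of $\tilde{\scrH} = \scrH\circ\pi_M$ at $\partial\tilde{M}$, and then cites \cite[Corollary 4.7]{diederich-sukhov08} to conclude, you work directly with $\phi = \scrH\circ u$ and aim for Hopf's strong maximum principle. The algebra is identical in substance: the identity $d\phi\circ j = u^*(d\scrH\circ\scrJ) + (|\scrX|^2\circ u)\,\beta$ you derive is exactly the pullback by $\tilde{u}$ of $d\tilde{\scrH}\circ\tilde{\scrJ}$, and your cancellation of the $\scrX$--$\xi$ cross terms via Cartan's formula and $L_\scrX(d\scrH\circ\scrJ)|_{\partial M}=0$ reproduces the paper's Levi form computation \eqref{eq:twisted-levi} after pullback. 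Your pointwise computation at the contact point $z_0$, including the sign bookkeeping, is correct.

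The gap is in the final ``promotion'' step. You assert that off $\partial M$ the failure of the $\iota_\scrX$-identity is controlled by $|d\phi|$, yielding $\Delta\phi + b\cdot d\phi \geq 0$ with no zeroth-order term. That is not right. The defect of $\mathrm{Im}(Du)$ from tangency to $\partial M$ is indeed $O(|d\phi|)$ (since $d\scrH(Du(\cdot)) = d\phi$). But the other error source --- the failure of $\iota_\scrX d(d\scrH\circ\scrJ) = d(|\scrX|^2)$ and of Levi nonnegativity away from $\partial M$ --- is a property of the ambient geometry of $M$, not of $u$, and is controlled by $\mathrm{dist}(u(z),\partial M) \sim \scrH(u(z_0)) - \phi(z)$, which is not $O(|d\phi|)$. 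So what one actually gets is an inequality of the form $\Delta\phi + b\cdot d\phi + c\,(\phi(z_0) - \phi) \geq 0$ with bounded $b$ and $c \geq 0$; equivalently, setting $v = \phi - \phi(z_0) \leq 0$, $\Delta v + b\cdot dv - cv \geq 0$ with the zeroth-order coefficient $-c \leq 0$. That is still admissible for the Hopf strong maximum principle, so the conclusion can be rescued, but the step as you wrote it is incorrect, and the sign of the zeroth-order term has to be checked rather than wished away. This is precisely the point where the paper prefers to cast the problem as a pseudoholomorphic curve meeting a weakly pseudoconvex hypersurface and delegate the maximum-principle bookkeeping to \cite{diederich-sukhov08}.
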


\begin{proof}
It is well-known that one can rewrite \eqref{eq:dbar} as the property of $\tilde{u}(z) = (z,u(z))$ to be a pseudo-holomorphic map into $\tilde{M} = S \times M$, with respect to the almost complex structure $\tilde{\scrJ}$ defined by
\begin{equation}
\left\{
\begin{aligned}
& \tilde{\scrJ} \xi = \scrJ \xi && \text{for $\xi \in TM$,} \\
& \tilde{\scrJ} (\eta + \scrX \beta(\eta)) = j\eta + \scrX \beta(j\eta) && \text{for $\xi \in TS$.}
\end{aligned}
\right.
\end{equation}
Let $\tilde{\scrH}$ be the pullback of $\scrH$ to $\tilde{M}$. The Levi form at a boundary point of $\tilde{M}$ is
\begin{equation} \label{eq:twisted-levi}
-d(d\tilde{\scrH} \circ \tilde{\scrJ})(\eta + \scrX \beta(\eta) + \xi, j\eta + \scrX \beta(j\eta) + \scrJ \xi) =
-\omega(\scrX, \scrJ \scrX) d\beta(\eta, j\eta) - d(d\scrH \circ \scrJ)(\xi, \scrJ \xi) 
\end{equation}
for $\eta \in TS$, $\xi \in T(\partial M) \cap \scrJ T(\partial M)$. By assumption, \eqref{eq:twisted-levi} is nonnegative. One now applies \cite[Corollary 4.7]{diederich-sukhov08} to show that if $\tilde{u}$ meets $\partial \tilde{M}$, it must be entirely contained in it.
\end{proof}

\begin{application} \label{th:contact}
The most commonly studied situation where \eqref{eq:h} and \eqref{eq:convexity} hold (and the one we adopted when stating our results in Section \ref{sec:results}) is that of a symplectic manifold with convex contact type boundary, where one takes the Hamiltonian flow to be an extension of the Reeb flow on the boundary (see e.g.\ \cite{viterbo}). Let $M$ be such a manifold, and $\scrZ$ a Liouville vector field, defined near $\partial M$. We then define $\scrH$ near $\partial M$ by asking that
\begin{equation} \label{eq:h-lambda}
\begin{aligned} 
& \scrH|\partial M = 1, \\ 
& \scrZ.\scrH = \scrH;
\end{aligned}
\end{equation}
the associated $\scrX$ restricts to the Reeb vector field on $\partial M$. One chooses $\scrJ$ so that 
\begin{equation}
-d \scrH \circ \scrJ = \omega(\scrZ,\cdot)
\end{equation}
is the primitive of the symplectic form near $\partial M$.
\end{application}

We will only really use the Taylor expansion to second order of $\scrH$ along the boundary, and the corresponding first order expansion of $\scrJ$, which are enough in order for \eqref{eq:h} and \eqref{eq:convexity} to make sense. Those data are considered to be part of the structure of $M$, and will be kept fixed. From now on, when we use Hamiltonian functions $H$ on $M$, these are always assumed to agree with some multiple $\epsilon\scrH$ to second order along $\partial M$. Here, $\epsilon>0$ is such that:
\begin{equation} \label{eq:epsilon}
\parbox{35em}{$\scrX|\partial M$ has no $\epsilon$-periodic orbits.}
\end{equation}
Similarly, all almost complex structures $J$ will be assumed to agree with $\scrJ$ to first order along $\partial M$. 

\subsection{Floer cohomology\label{subsec:floer}}
Choose a time-dependent Hamiltonian $H = (H_t)_{t \in S^1}$ (in the class defined above, for some $\epsilon$), and let $X = (X_t)$ be the associated vector field. Consider $1$-periodic orbits, which means solutions
\begin{equation} \label{eq:orbit}
\left\{
\begin{aligned}
& x: S^1 \longrightarrow M, \\
& dx/dt = X_t.
\end{aligned}
\right.
\end{equation}
All such orbits lie in the interior of $M$, by construction, and we additionally assume that they should be nondegenerate and disjoint from $\Omega$. From now on, we will only use $1$-periodic orbits which are nullhomologous, meaning that
\begin{equation} \label{eq:h1-null}
[x] = 0 \in H_1(M)
\end{equation}
(for a version involving non-nullhomologous orbits as well, see Remark \ref{th:non-contractible}). We also choose a time-dependent almost complex structure $J = (J_t)$. The construction of Floer cohomology is based on solutions of 
\begin{equation} \label{eq:floer}
\left\{
\begin{aligned}
& u: \bR \times S^1 \longrightarrow M, \\
& \partial_s u + J_t(\partial_t u - X_t) = 0, \\
& \textstyle \lim_{s \rightarrow \pm \infty} u(s, t) = x_\pm(t).
\end{aligned}
\right.
\end{equation}
Here, $x_{\pm}$ are orbits \eqref{eq:orbit} satisfying \eqref{eq:h1-null}. (There is a budding notational clash, between pseudo-holomorphic maps $u$ on one hand, and the formal variable $u$ in the equivariant theory on the other hand. Since the objects involved are so different, chances of confusion are hopefully minimal.) The operations on Floer groups that will appear use more general ``continuation map equations'', of the overall form
\begin{equation} \label{eq:floer-2}
\left\{
\begin{aligned}
& u: \bR \times S^1 \longrightarrow M, \\
& \partial_s u + J_{s,t}^\ast(\partial_t u - X_{s,t}^\ast) = 0, \\
& \textstyle \lim_{s \rightarrow -\infty} u(s,t) = x_-(t), \\
& \textstyle \lim_{s \rightarrow +\infty} u(s,t) = x_+(t+\tau). 
\end{aligned}
\right.
\end{equation}
Here, we have chosen $\tau \in S^1$, and a family of functions and almost complex structures $(H_{s,t}^\ast,J_{s,t}^\ast)$, with associated vector field $X_{s,t}^\ast$, such that:
\begin{equation} \label{eq:tau-hj}
(H_{s,t}^\ast,J_{s,t}^\ast) = \begin{cases} 
(H_t,J_t) & s \ll 0, \\
(H_{t+\tau},J_{t+\tau}) & s \gg 0.
\end{cases} 
\end{equation}
(Later, when many different choices of $(H_{s,t}^\ast,J_{s,t}^\ast)$ will occur, the superscript $\ast$ will be replaced by the name of the Floer-theoretic operation under construction.) The basic analytic aspects of  \eqref{eq:floer} and \eqref{eq:floer-2} are familiar:
\begin{itemize}
\itemsep.5em
\item No solution can reach $\partial M$. To see that, one follows the argument from Lemma \ref{th:levi}, for $S = \bR \times S^1$ and $\beta = \epsilon \mathit{dt}$. The almost complex structure on $\tilde{M}$ constructed from $(H_{s,t}^*, J_{s,t}^*)$ still satisfies \eqref{eq:twisted-levi}. Because the limits of $u$ lie in the interior, it is impossible for that map to be entirely contained in $\partial M$, and this concludes the argument. The same property holds for the nodal pseudo-holomorphic curves produced from sequences of solutions by sphere bubbling.

\item Let $i(x) \in \bZ$ be the Conley-Zehnder index of a $1$-periodic orbit, which is well-defined thanks to \eqref{eq:cy}. The linearization of our equation is a Fredholm operator $D_u$ with
\begin{equation} \label{eq:index}
\mathrm{index}(D_u) = i(x_-) - i(x_+).
\end{equation}

\item \parskip1em \parindent0em
Transversality issues can be dealt with by varying the auxiliary data, as in \cite{floer-hofer-salamon94, hofer-salamon95} (and the same applies to ``transversality of evaluation''). The fact that those data have to be kept fixed along $\partial M$ does not affect our argument, since solutions remain in the interior.

\item
One defines the action of a $1$-periodic orbit to be
\begin{equation}
A_H(x) = \int_{S^1} -x^*\theta + H_t(x(t)) \, \mathit{dt}.
\end{equation}
Then, the energy of any solution $u$ can be written as
\begin{equation} \label{eq:energy}
E(u) = \int_{\bR \times S^1} |\partial_s u|^2 = 
A_H(x_-) - A_H(x_+) + u \cdot \Omega + \int_{\bR \times S^1} (\partial_s H_{s,t}^*)(u(s,t)).
\end{equation}
The last term is bounded independently of $u$, because of \eqref{eq:tau-hj}. Hence, an upper bound on the intersection number $u \cdot \Omega \in A$ yields a bound on the energy. 
\end{itemize}

The Floer cochain complex is
\begin{equation}
\mathit{CF}^*(H) = \bigoplus_x \Lambda_x.
\end{equation}
Here, the sum is over all $1$-periodic orbits $x$, each of which contributes a one-dimensional summand $\Lambda_x$ (identified with $\Lambda$ in a way that's canonical up to a sign, and placed in degree $i(x)$; we will usually write $\pm x$ for the preferred generators of that summand). The differential
\begin{equation} \label{eq:d}
\begin{aligned} 
& d: \mathit{CF}^*(H) \longrightarrow \mathit{CF}^{*+1}(H), \\
& dx_+ = \sum_u \pm q^{u \cdot \Omega}\, x_-,
\end{aligned}
\end{equation}
is obtained by counting solutions of \eqref{eq:floer}. More precisely, one counts non-stationary solutions which are isolated up to $s$-translation, with signs (given more intrinsically by an isomorphism $\Lambda_{x_+} \rightarrow \Lambda_{x_-}$ for each $u$). Up to canonical isomorphism, the resulting Floer cohomology depends only on $\epsilon$; we denote it by $\mathit{HF}^*(M, \epsilon)$. 

\begin{example}
As a very simple instance of the well-definedness property, let's see how, $(H,J)$ being kept fixed, Floer cohomology is independent of $\Omega$. Suppose that we have two choices $\Omega_{\pm}$, with associated $\theta_{\pm}$. Take a cycle $\tilde{\Omega}$ in $\bR \times M$ with $A$-coefficients, which interpolates between the two, meaning that it equals $\bR^{\pm} \times \Omega_{\pm}$ at infinity. Then, the associated map between Floer cochain complexes simply rescales each generator by a suitable power of $q$:
\begin{equation} \label{eq:rescale}
r(x) = q^{\tilde{A}(x)} x,
\end{equation}
where
\begin{equation} \label{eq:tilde-a}
\tilde{A}(x) = (\bR \times x) \cdot \tilde{\Omega} = 
-\int_{S^1} x^*\theta_+ + \int_{S^1} x^*\theta_-.
\end{equation}
The two expressions for $\tilde{A}(x)$ show that it lies in the subgroup $A$, and also that it is independent of the choice of $\tilde{\Omega}$; the equivalence of those expressions is shown by capping off $x_{\pm}$ with surfaces in $M$ (hence uses the fact that $x_{\pm}$ is nullhomologous).
\end{example}

\subsection{Operations}
The general structure of operations on Floer cohomology is roughly as follows. Suppose that we have an equation \eqref{eq:floer-2}, where the data $(H^\ast_{s,t},J^\ast_{s,t})$ can depend on additional parameters. The simplest case is when the parameter space is a compact oriented manifold with boundary, denoted by $P$. Then (assuming suitably generic choices to ensure transversality), counting isolated points in the parametrized moduli space of solutions of \eqref{eq:floer-2}, in the same way as in \eqref{eq:d}, yields a map
\begin{equation}
\phi_P : \mathit{CF}^*(H) \longrightarrow \mathit{CF}^{*-\mathrm{dim}(P)}(H),
\end{equation}
which is related to its counterpart for the restriction of the parameters to $\partial P$ by 
\begin{equation} \label{eq:sign-1}
(-1)^{\mathrm{dim}(P)} d\phi_P - \phi_P d + \phi_{\partial P} = 0.
\end{equation}
If $P$ is closed, $\phi_P$ is a chain map of degree $-\mathrm{dim}(P)$. A standard generalization is where 
\begin{equation} \label{eq:boundary-product}
\partial P = P_1 \times P_2
\end{equation}
is a product, and the family of equations \eqref{eq:floer-2} does not smoothly extend to the boundary, but instead asymptotically decouples into two equations parametrized by $P_1$ and $P_2$, which are limits over parts of the cylinder that are separated by an increasingly long neck. In that case, the modified formula for the boundary contribution in \eqref{eq:sign-1} is
\begin{equation} \label{eq:added-koszul}
\phi_{\partial P} = (-1)^{\mathrm{dim}(P_1) \mathrm{dim}(P_2)} \phi_{P_1} \phi_{P_2}.
\end{equation}
There are further generalizations of those basic setups, involving parameter spaces that are manifolds with corners. These are all routinely used in Floer theory, and we will not spell out the details; the very short discussion here was intended merely as an indication of our notation and sign conventions.

\begin{remark}
The Koszul sign in \eqref{eq:added-koszul} may deserve some explanation. Points in a para\-me\-trized moduli space are pairs $(r,u)$ consisting of some $r \in P$ and a map $u$ satisfying the appropriate $r$-dependent equation \eqref{eq:floer-2}. Linearizing the equation (with variable $r$) yields an operator which is an extension of the ordinary linearized operator $D_u$, with the domain enlarged by $T_r P$. The top exterior power of the tangent space of the parametrized moduli space is the determinant line of this extended operator, which can be identified with
\begin{equation} \label{eq:det}
\lambda^{\mathit{top}}(T_r P) \otimes \mathit{det}(D_u).
\end{equation}
In the limit where $r$ degenerates to a point $(r_1,r_2) \in P_1 \times P_2$, and $u$ converges to a limit consisting of pieces $(u_1,u_2)$, the corresponding expression along the boundary would be
\begin{equation} \label{eq:det-2}
\lambda^{\mathit{top}}(T_{r_1} P_1) \otimes \mathit{det}(D_{u_1}) \otimes \lambda^{\mathit{top}}(T_{r_2} P_2) \otimes \mathit{det}(D_{u_2}).
\end{equation}
To compare \eqref{eq:det} and \eqref{eq:det-2}, one uses the isomorphism $\lambda^{\mathit{top}}(TP)|\partial P \iso \lambda^{\mathit{top}}(TP_1) \otimes \lambda^{\mathit{top}}(TP_2)$ induced by \eqref{eq:boundary-product}, as well as the gluing formula for determinant lines, $\mathit{det}(D_u) \iso \mathit{det}(D_1) \otimes \mathit{det}(D_2)$. When applying those two results to \eqref{eq:det-2}, one exchanges the middle two factors in the tensor product, and that comes with a Koszul sign $(-1)^{\mathrm{index}(D_{u_1}) \mathrm{dim}(P_2)}$, by the construction of determinant line bundles (see \cite{zinger16} for a comprehensive exposition). But since the relevant argument considers isolated solutions $(r_1,u_1)$ and $(r_2,u_2)$, the index of $D_{u_1}$ is minus the dimension of $P_1$.
\end{remark}

As a warmup for later considerations, we want to discuss certain specific operations. The simplest of these is the quantum cap product with the class $q^{-1}[\Omega] \in H^2(M;\Lambda)$. To define the underlying chain map
\begin{equation} \label{eq:iota}
\iota: \mathit{CF}^*(H) \longrightarrow \mathit{CF}^{*+2}(H),
\end{equation}
choose some $(H^\iota,J^\iota) = (H^\iota_{s,t}, J^\iota_{s,t})$ which satisfies \eqref{eq:tau-hj} with $\tau = 0$, meaning that it reduces to $(H_t,J_t)$ for $|s| \gg 0$. Then, consider solutions of the associated equation \eqref{eq:floer-2} which satisfy the incidence condition
\begin{equation} \label{eq:incidence}
u(0,0) \in q^{-1}\Omega.
\end{equation}
(There is nothing special about $(0,0)$: any other point on the cylinder could be used instead. Similarly, we could have used any fixed value of $\tau$.) The notation \eqref{eq:incidence} is shorthand for the following. For each component $\Omega_j$, we count solutions such that $u(0,0) \in \Omega_j$ as usual with $\pm q^{u \cdot \Omega}$, and then take the sum of those contributions with multiplicities $q^{-1}m_j$ taken from \eqref{eq:omega-cycle}. Obviously, one has to assume that the space of solutions satisfies suitable transverse intersections conditions with the $\Omega_j$, but that is easy to achieve, given the freedom to choose $(H_{s,t}^\iota, J_{s,t}^\iota)$. Instead, one could also opt for a more restricted choice, which is to just use the given $(H^\iota_{s,t},J^\iota_{s,t}) = (H_t,J_t)$. This would require an additional transversality argument for the original $(H_t,J_t)$, which is again within the scope of standard methods.

There is a similar operation where one allows the evaluation point to move,
\begin{equation} \label{eq:lambda}
\lambda: \mathit{CF}^*(H) \longrightarrow \mathit{CF}^{*+1}(H).
\end{equation}
For that, one introduces a parameter $r \in S^1$, and replaces \eqref{eq:incidence} with
\begin{equation} \label{eq:incidence-2}
u(0,-r) \in q^{-1}\Omega.
\end{equation}
As before, one implements this by choosing a family $(H^\lambda,J^\lambda) = (H^\lambda_{r,s,t}, J^\lambda_{r,s,t})$ which, for each value of $r$, satisfies \eqref{eq:tau-hj} with $\tau = 0$. Alternatively, the special choice $(H^\lambda_{r,s,t},J^\lambda_{r,s,t}) = (H_t,J_t)$ also still works, assuming suitable transversality properties. The advantage of adopting this special choice (which will be crucial later on) is that then, \eqref{eq:iota} can be viewed as a sum over the same solutions as in the Floer differential, but with modified multiplicities:
\begin{equation} \label{eq:concrete-lambda}
\lambda(x_+) = \sum_u \pm q^{u \cdot \Omega-1}\, (u \cdot \Omega) x_-.
\end{equation}
The idea is that, if $u(s,t) \in \Omega$, one can translate $u$ in $s$-direction so that \eqref{eq:incidence-2} holds, with $r =-t$. The sign in \eqref{eq:incidence-2} may seem puzzling in view of \eqref{eq:concrete-lambda}; we refer to \cite[Section 8a]{seidel16} for a detailed explanation.

The final operation we want to consider is the BV operator
\begin{equation} \label{eq:bv}
\Delta: \mathit{CF}^*(H) \longrightarrow \mathit{CF}^{*-1}(H).
\end{equation}
Again, this is based on a moduli problem with one parameter $r \in S^1$, but where that parameter now affects the rotation of the end $s \rightarrow \infty$. Concretely, this means that one chooses $(H^\Delta,J^\Delta) = (H^\Delta_{r,s,t}, J^\Delta_{r,s,t})$ satisfying \eqref{eq:tau-hj} for 
\begin{equation} \label{eq:tau-is-r}
\tau = r, 
\end{equation}
and uses the resulting parametrized space of solutions of \eqref{eq:floer-2} (more generally, one could let $\tau = \tau(r)$ be any degree $1$ function $S^1 \rightarrow S^1$). This time, there is no option to use the original $(H,J)$, because they are not time-independent (there are special cases where this is possible, leading to vanishing of $\Delta$; see Section \ref{subsec:morse-floer} below).

\begin{lemma} \label{th:iota-lambda}
There is a chain homotopy
\begin{equation} \label{eq:iota-lambda}
\lambda \htp \Delta \iota - \iota \Delta.
\end{equation}
\end{lemma}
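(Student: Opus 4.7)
The plan is to construct the chain homotopy from a two-dimensional parametrized moduli problem. Since $\lambda$, $\Delta\iota$ and $\iota\Delta$ all have degree $+1$, the homotopy $K$ has degree $0$, which is realized by a $2$-dimensional parameter space combined with the codimension $2$ incidence constraint coming from $\Omega$. I would take the parameter space $P$ to be a compact oriented genus zero surface with three boundary circles (a pair of pants), with boundary components $C_\lambda$, $C_{\Delta\iota}$, $C_{\iota\Delta}$, each a copy of $S^1$ carrying a parameter $r$.

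Over $P$ one chooses a smoothly varying family of Floer data $(H^K_{p,s,t}, J^K_{p,s,t})$ satisfying \eqref{eq:tau-hj} with a shift $\tau(p) \in S^1$, together with a smoothly varying marked point $z(p) \in \bR \times S^1$; the moduli problem at $p \in P$ consists of solutions of \eqref{eq:floer-2} with the added incidence constraint $u(z(p)) \in q^{-1}\Omega$. The data near each boundary circle is chosen to match the corresponding operation. Near $C_\lambda$ with parameter $r$, set $\tau \equiv 0$, $z = (0,-r)$, and $(H^K, J^K) = (H^\lambda, J^\lambda)$. Near $C_{\Delta\iota}$, arrange for the neck of the cylinder to stretch to infinity as $p \to C_{\Delta\iota}$, so that the degeneration produces the broken configuration defining $\Delta\iota$: right piece the standard $\iota$-configuration (no rotation, $z = (0,0)$), left piece the standard $\Delta$-configuration ($\tau = r$, no insertion). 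Near $C_{\iota\Delta}$ one does the analogous construction, with right piece $\Delta$ and left piece $\iota$; the fact that the right-hand $\Delta$-piece applies a time-shift by $r$ to its left asymptote means that, when translated into the ambient cylinder coordinates, the left-hand $\iota$-piece carries its insertion at $(0,-r)$, consistently interpolating with the $C_\lambda$ behaviour. Between the three boundary circles one interpolates the data smoothly over the pair of pants; topologically there is no obstruction.

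Counting isolated solutions $(p,u)$ with signs and weights $q^{u \cdot \Omega - 1}$ (the $-1$ reflecting the coefficient of $q^{-1}[\Omega]$) then defines a degree $0$, $\Lambda$-linear operator $K: \mathit{CF}^*(H) \to \mathit{CF}^*(H)$. The standard boundary analysis \eqref{eq:sign-1}, combined with the Koszul rule \eqref{eq:added-koszul} at the two broken strata $C_{\Delta\iota}$ and $C_{\iota\Delta}$, yields an identity of the shape $dK - Kd = \lambda - \Delta\iota + \iota\Delta$ (up to the standard sign conventions), which is the stated chain homotopy.

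The main obstacle is the sign bookkeeping and the verification that the interpolation over the pair of pants realizes precisely the prescribed boundary behaviour. In particular, the three boundary circles have different topological type---smooth parameter boundary for $C_\lambda$ versus broken-configuration boundary for $C_{\Delta\iota}$ and $C_{\iota\Delta}$, the latter contributing the Koszul sign from \eqref{eq:added-koszul}---and the coherent matching of the rotations $\tau(p)$ and marked points $z(p)$ across the interior of $P$ requires some care but no new ideas beyond the standard Cartan-type argument familiar from cyclic homology.
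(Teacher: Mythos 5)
Your proposal coincides with the ``single combined parametrized moduli problem'' over a (modified) pair of pants that the paper's proof explicitly notes as an equivalent alternative to its three-step construction, in which one first builds intermediate operations $\lambda_+ \htp \Delta\iota$ and $\lambda_- \htp \iota\Delta$ by neck-stretching, and then shows $\lambda_+ - \lambda_- \htp \lambda$ over a pair of pants. One small remark: the Koszul sign from \eqref{eq:added-koszul} that you invoke at the two broken strata is actually trivial here, since the parameter space of the $\iota$-factor is a point, so $(-1)^{\dim(P_1)\dim(P_2)} = +1$.
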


\begin{proof}
We begin by rewriting the two terms on the right hand side in a more compact way, up to chain homotopy. Namely, consider a setup which still has a parameter $r \in S^1$, with $\tau = r$, and additionally the incidence condition \eqref{eq:incidence-2}. This gives rise to an operation
\begin{equation}
\label{eq:lambda-plus}
\lambda_+ \htp \Delta \iota.
\end{equation}
To get the homotopy in \eqref{eq:lambda-plus}, one uses a neck-stretching argument in which our family degenerates to that for $\Delta$, glued together with the surface underlying $\iota$ (one can also think of this argument as moving the marked point towards $s \rightarrow +\infty$). This works because (to put it in the simplest terms) after a coordinate change $\tilde{u}(s,t) = u(s,t-r)$, part of the original conditions looks like this:
\begin{equation}
\begin{aligned}
& \tilde{u}(0,0) \in q^{-1}\Omega, \\
& \textstyle \lim_{s \rightarrow +\infty} \tilde{u}(s,t) = x_+(t).
\end{aligned}
\end{equation}
On the other hand, one can consider another parametrized moduli problem, where still $\tau = r$, but the incidence condition is the $r$-independent one \eqref{eq:incidence}. By a similar argument, this gives rise to an operation
\begin{equation}
\label{eq:lambda-minus}
\lambda_- \htp \iota \Delta.
\end{equation}
There is another family of equations \eqref{eq:floer-2} parametrized by the compact pair-of-pants, whose restrictions to the three boundary circles are: the family underlying $\lambda_+$; that underlying $\lambda_-$, with the orientation of the circle reversed; and the family underlying $\lambda$, again with reversed orientation. From that, one gets a homotopy
\begin{equation} \label{eq:lambda-plus-minus}
\lambda_+ - \lambda_- \htp \lambda.
\end{equation}
By combining \eqref{eq:lambda-plus}, \eqref{eq:lambda-minus} and \eqref{eq:lambda-plus-minus}, one obtains \eqref{eq:iota-lambda}. We have divided the construction of \eqref{eq:iota-lambda} into three parts for ease of exposition. However, one can also implement it as a single homotopy given by a combined parametrized moduli problem, where the parameter space is a modified pair-of-pants (with one boundary circle and two ends; equivalently, a closed disc with two interior points removed).
\end{proof}

\begin{lemma}
There is a nullhomotopy
\begin{equation} \label{eq:delta-lambda}
\lambda \Delta + \Delta \lambda \htp 0.
\end{equation}
\end{lemma}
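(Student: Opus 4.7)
The plan is to prove~\eqref{eq:delta-lambda} by directly constructing a chain nullhomotopy, in the spirit of Lemma~\ref{th:iota-lambda}. Specifically, I consider a $3$-parameter family of continuation equations~\eqref{eq:floer-2} on the cylinder, parametrized by $(\rho,r_1,r_2) \in [0,1] \times S^1 \times S^1$, whose two boundary strata at $\rho = 0$ and $\rho = 1$ correspond to the broken configurations producing $\Delta\lambda$ and $\lambda\Delta$ respectively.

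The continuation data $(H^\ast_{\rho,r_1,r_2,s,t}, J^\ast_{\rho,r_1,r_2,s,t})$ should combine the defining features of $\Delta$ and $\lambda$: it implements the rotation $\tau = r_2$ at $s \to +\infty$, and one imposes an incidence condition $u(s(\rho), -r_1) \in q^{-1}\Omega$ at a marked point whose $s$-coordinate $s(\rho)$ slides from $s(0) \ll 0$ to $s(1) \gg 0$. I would arrange the data so that as $\rho$ approaches $0$ or $1$ a long neck develops between the marked point and the rotating end, with the neck on opposite sides in the two limits. By the neck-stretching argument used in the proof of Lemma~\ref{th:iota-lambda} (compare~\eqref{eq:lambda-plus} and~\eqref{eq:lambda-minus}), these limits identify each boundary stratum with a two-level broken cylinder whose pieces, after the appropriate rotation of the intermediate orbit is absorbed into the rotated Hamiltonian, match the compositions $\Delta\lambda$ and $\lambda\Delta$; which composition appears on which side is dictated by which piece of the broken configuration carries the marked point relative to the rotating end.

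Combining the general boundary identity~\eqref{eq:sign-1} with the Koszul prescription~\eqref{eq:added-koszul} applied to each of the two boundary strata $S^1 \times S^1$ then yields a chain-level identity of the form $d\phi + \phi d = \pm(\lambda\Delta + \Delta\lambda)$, which is the required nullhomotopy. The hard part is the sign bookkeeping: one must confirm that the opposite boundary orientations at $\rho = 0$ and $\rho = 1$, combined with the Koszul sign $(-1)^{1 \cdot 1} = -1$ arising from each $S^1 \times S^1$ factor, conspire to produce the graded \emph{anti}-commutator $\lambda\Delta + \Delta\lambda$ rather than the commutator. As a sanity check, the result also follows algebraically: composing Lemma~\ref{th:iota-lambda} (in the form $\lambda \htp \Delta\iota - \iota\Delta$) with $\Delta$ on either side gives $\lambda\Delta + \Delta\lambda \htp \Delta^2\iota - \iota\Delta^2$, which vanishes up to homotopy granted the standard BV-square identity $\Delta^2 \htp 0$.
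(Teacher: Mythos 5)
Your approach is correct and is essentially the variant the paper alludes to in its last sentence of the proof: ``one could also encode the entire argument in a single parametrized moduli problem, with parameter space $\bR \times S^1 \times S^1$.'' Your $[0,1]\times S^1\times S^1$ is the compactification of that parameter space. The paper's \emph{main} presentation is slightly different: it introduces two separate families over $S^1\times S^1$, one with conditions $\tau=r_1^+,\ u(0,-r_1^+-r_2^+)\in q^{-1}\Omega$ giving (up to a Koszul sign) $\Delta\lambda$ after a neck-stretch, and one with $\tau=r_2^-,\ u(0,-r_1^-)\in q^{-1}\Omega$ giving $\lambda\Delta$; it then observes that the two families coincide under the orientation-reversing parameter change $(r_1^+,r_2^+)=(r_2^-,r_1^--r_2^-)$, so the two chain homotopies combine to give the result. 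Your single $3$-parameter family packages the two neck-stretches and the orientation-reversing reparametrization into one gadget, which is cleaner to state but pushes all the sign/orientation bookkeeping into the two ends of the $\rho$-interval; the paper's version makes the orientation-reversal visible as an explicit $\mathrm{GL}_2(\bZ)$ element.

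Two small remarks. First, your assignment of which end of the $\rho$-interval produces which composition appears to be swapped: as the marked point slides towards $s\to+\infty$, the broken cylinder's top piece (which acts first, being the $s\gg0$ side) carries the marked point, while the bottom piece inherits the rotation of the intermediate orbit, so $\rho=1$ gives $\Delta\circ\lambda$ and $\rho=0$ gives $\lambda\circ\Delta$. This is immaterial for the symmetric statement $\lambda\Delta+\Delta\lambda\htp 0$, but worth getting right if one actually tracks signs. Second, your closing ``sanity check'' — deducing \eqref{eq:delta-lambda} from \eqref{eq:iota-lambda} and $\Delta^2\htp 0$ — is precisely the observation the paper itself makes immediately after these three lemmas (``in view of \eqref{eq:delta-squared}, \eqref{eq:iota-lambda} clearly implies \eqref{eq:delta-lambda}''); the paper still proves each separately because each argument is the toy model for a later construction.
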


\begin{proof}
Each of the two terms in the equation is chain homotopic to what one would get from a moduli problem with parameters in $S^1 \times S^1$. For $\Delta \lambda$, we denote the parameters by $(r_1^+,r_2^+)$, and the conditions are 
\begin{equation} \label{eq:plus-conditions}
\tau = r_1^+, \quad u(0,-r_1^+ - r_2^+) \in q^{-1}\Omega.
\end{equation}
For $\lambda \Delta$, we denote the parameters by $(r_1^-,r_2^-)$ and the counterpart of \eqref{eq:plus-conditions} is
\begin{equation} \label{eq:minus-conditions}
\tau = r_2^-, \quad u(0,-r_1^-) \in q^{-1}\Omega.
\end{equation}
(More precisely, these families give operations homotopic to $-\Delta\lambda$ and $-\lambda\Delta$, because of the sign in \eqref{eq:added-koszul}, but that ultimately makes no difference to our argument.) The conditions \eqref{eq:plus-conditions} and \eqref{eq:minus-conditions} are related by an orientation-reversing parameter change
\begin{equation} \label{eq:two-dim-parameter-space}
(r_1^+, r_2^+) = (r_2^-, r_1^- - r_2^-).
\end{equation}
One can therefore combine the two chain homotopies to get \eqref{eq:delta-lambda}.
As before, one could also encode the entire argument in a single parametrized moduli problem, with parameter space $\bR \times S^1 \times S^1$.
\end{proof}

\begin{lemma} \label{th:delta-squared}
There is a nullhomotopy
\begin{equation} \label{eq:delta-squared}
\Delta^2 \htp 0.
\end{equation}
\end{lemma}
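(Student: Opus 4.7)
The plan is to build an explicit nullhomotopy $\Delta_2 : \mathit{CF}^*(H) \to \mathit{CF}^{*-3}(H)$ by counting solutions of a parametrized Floer equation \eqref{eq:floer-2} whose parameter space is $P_2$. This is the natural next step in the pattern begun by $\Delta$, whose parameter space is $P_1 \iso S^1$; the Morse-theoretic framework of Section \ref{sec:morse} supplies exactly what is needed. Since $\dim P_2 = 3$, the resulting operator will have the correct degree to yield a chain homotopy between $\Delta^2$ (degree $-2$) and $0$.

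First I would choose a family of Floer data $(H^{\Delta_2}_{p,s,t}, J^{\Delta_2}_{p,s,t})$ depending on $p \in P_2$, satisfying \eqref{eq:tau-hj} with total rotation parameter $\tau = \alpha_2(p)$, where $\alpha_2 : P_2 \to S^1$ is the map from Lemma \ref{th:alpha}. As $p$ approaches a point of the unique boundary stratum $P_1 \times P_1 \subset \partial P_2$ (see \eqref{eq:boundary-pk-d}), the family is arranged, via a standard neck-stretching deformation, to degenerate into a concatenation of two copies of the $\Delta$-equation with rotation parameters $r_i = \alpha_1(v_i)$; the crucial compatibility $\alpha_2\,|\,(P_1 \times P_1) = \alpha_1 + \alpha_1$ ensures that the total rotations match across the degeneration. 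After a generic perturbation of $(H^{\Delta_2}, J^{\Delta_2})$ on the interior of $P_2$ to secure transversality (of the linearized operator and of evaluation at $\partial M$), I define $\Delta_2$ by the usual count $\Delta_2(x_+) = \sum_{p,u} \pm q^{u \cdot \Omega} x_-$ over isolated pairs $(p,u)$.

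The boundary formulas \eqref{eq:sign-1} and \eqref{eq:added-koszul}, specialized to $P = P_2$ and $\partial P = P_1 \times P_1$, then yield
\begin{equation*}
-\, d\Delta_2 - \Delta_2 d + (-1)^{\dim P_1 \cdot \dim P_1}\, \Delta \circ \Delta \;=\; 0,
\end{equation*}
which rearranges to the desired $\Delta^2 \htp 0$ with explicit nullhomotopy (up to an overall sign) given by $\Delta_2$. The main obstacle is pinning down these signs: the key inputs are Lemma \ref{th:orientations}(i), which states that the orientation of $P_2$ is compatible with the product orientation on $P_1 \times P_1$, and the Koszul convention for determinant lines recalled in the remark following \eqref{eq:added-koszul}. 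The analytic ingredients — exclusion of escape to $\partial M$ via Lemma \ref{th:levi}, energy bounds via \eqref{eq:energy}, and transversality for the parametrized problem — are all inherited directly from Section \ref{subsec:floer}, since the family is constant equal to $(H_t,J_t)$ near $\partial M$ and the parameter space $P_2$ is compact.
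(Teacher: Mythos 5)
Your proposal is correct, but it takes a different route from the paper. The paper's own argument is short and self-contained: one first observes (by the same neck-stretching step as in the preceding two lemmas) that $\Delta^2$ is chain homotopic to the operation $\phi_{S^1\times S^1}$ coming from a family with rotation parameter $\tau = r_1+r_2$, and then notes that because the family factors through the sum $r_1+r_2$, it extends over the solid torus $D^2 \times S^1$, which yields the nullhomotopy directly. You instead package exactly the same topological observation into the Morse-theoretic framework of Section~\ref{sec:morse}: you use $P_2 \iso S^1 \times D^2$ (which is the solid torus), with $\partial P_2 = P_1 \times P_1 \iso S^1\times S^1$, and the compatibility $\alpha_2\,|\,(P_1\times P_1) = \alpha_1 + \alpha_1$ from Lemma~\ref{th:alpha} is precisely the statement that the ``sum'' function on the boundary torus extends over the interior. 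In other words, your $\Delta_2$ is exactly $d_{\mathit{eq},2}$, and the sign computation you do is the $k=2$ case of \eqref{eq:pre-dsquared}. This is correct and requires no new ideas beyond the construction already carried out for $d_{\mathit{eq}}$ in Section~\ref{sec:q}; what it costs is that it invokes the heavier Morse-theoretic apparatus (the spaces $P_k$, Lemma~\ref{th:alpha}, Lemma~\ref{th:orientations}), whereas the paper's version is deliberately elementary precisely because it is meant to serve as a hand-checkable toy model for the full hierarchical construction that comes afterward. Your proof is, in effect, a spoiler: it confirms that once $d_{\mathit{eq}}$ is built, \eqref{eq:delta-squared} is automatic from \eqref{eq:pre-dsquared}, which the paper also points out when it says the three warm-up lemmas are not independent.
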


\begin{proof}
This is the most familiar among our relations. As before, $\Delta^2$ is chain homotopic to what one gets from a moduli problem with parameters $(r_1,r_2) \in S^1 \times S^1$, and which has
\begin{equation}
\tau = r_1 + r_2.
\end{equation}
Since only $r_1+r_2$ appears, one can extend the relevant family over the solid torus, and that yields the nullhomotopy.
\end{proof}

The preceding three Lemmas are not independent; in view of \eqref{eq:delta-squared}, \eqref{eq:iota-lambda} clearly implies \eqref{eq:delta-lambda}. Nevertheless, we have explained them separately, since each argument forms the toy model for one of the constructions that follow.

\subsection{Relation with Morse theory\label{subsec:morse-floer}}
Our next task is to review the isomorphism between ordinary cohomology and Floer cohomology, which holds when the Hamiltonian is sufficiently small. Ordinary cohomology will be realized through Morse theory. Let $f$ be a Morse function which is locally constant on $\partial M$, with the gradient pointing outwards. After choosing a metric $g$ which makes $\nabla f$ Morse-Smale, one can associate to it the Morse complex $\mathit{CM}^*(f)$ (with $\Lambda$-coefficients), whose cohomology is canonically isomorphic to $H^*(M;\Lambda)$.

{\em First approach (direct isomorphism).}
There is a classical argument \cite{floer-hofer-salamon94} which allows one to identify the Morse complex and Floer complex on the nose, assuming precise coordination of the choices involved in defining each of them. The main technical result from \cite{floer-hofer-salamon94} (with minor adaptations to our context) says that there is a function $H$ and compatible almost complex structure $J$ (not depending on any additional parameters), with the following properties:
\begin{itemize} \itemsep.5em
\item Along the boundary, $H = \epsilon\scrH$ to second order, for some small $\epsilon>0$, and $J = \scrJ$ to first order.

\item $H$ is Morse, and its gradient flow with respect to the metric $\omega(\cdot, J\cdot)$ is Morse-Smale. 

\item All $1$-periodic orbits of the Hamiltonian vector field $X$ of $H$ are constant.

\item Any non-stationary solution of Floer's equation \eqref{eq:floer} (with the given $t$-independent choice of $H$ and $J$) has nonnegative expected dimension, meaning that $\mathrm{index}(D_u) > 0$. Moreover, the solutions with $\mathrm{index}(D_u) = 1$ are all $t$-independent, and regular (this means that they are negative gradient flow lines of $H$; since $H$ is small, they will also be regular in the Morse-theoretic sense).

\item All (non-constant) $J$-holomophic spheres avoid the critical points of $H$, as well as its isolated gradient flow lines.
\end{itemize}
Note that we are not claiming that all $u$ are regular (it might be possible to get such a stronger statement using more sophisticated techniques \cite{wendl16}, and that would simplify our argument a little; but it is not necessary). In spite of that, one can define $\mathit{HF}^*(M,\epsilon)$ using the given $(H,J)$, and it will be canonically isomorphic to the standard definition, by a continuation map argument. Obviously, for this special choice, we have an identification of chain complexes
\begin{equation} \label{eq:chain-iso}
\mathit{CM}^*(H) = \mathit{CF}^*(H),
\end{equation}
and hence $H^*(M;\Lambda) \iso \mathit{HF}^*(M,\epsilon)$. A weakness of this approach is that it is not a priori clear whether this isomorphism is canonical; but as we will see, one can work around this issue, at the price of imposing additional conditions on $(H,J)$.

{\em Second approach (PSS map).}
In the construction of the PSS map \cite{piunikhin-salamon-schwarz94}, the Morse and Floer sides are a priori unrelated. We work with some choice of $(f,g)$ to define Morse cohomology; and some $\epsilon$, which can be arbitrary except for \eqref{eq:epsilon}, and $(H_t,J_t)$ to define Floer cohomology.  Consider solutions of the following equation:
\begin{equation} \label{eq:pss-equation}
\left\{
\begin{aligned}
& u: (\bR \times S^1) \cup \{+\infty\} \longrightarrow M, \\
& z: [0,\infty) \longrightarrow M, \\
& u(+\infty) = z(0), \\
& \partial_s u + J_{s,t}^{B}(\partial_t u - X_{s,t}^{B}) = 0, \\
& \partial_s z + \nabla_{g_s^B} f_s^{B} = 0, \\
& \textstyle\lim_{s \rightarrow -\infty} u(s,\cdot) = x, \\
& \textstyle\lim_{s \rightarrow +\infty} z(s) = y.
\end{aligned}
\right.
\end{equation} 
Here, $(\bR \times S^1) \cup \{+\infty\}$ is a partially compactified cylinder, which is a Riemann surface isomorphic to the complex plane. The limit $x$ is a $1$-periodic orbit of $H$, while $y$ is a critical point of the Morse function $f$. The auxiliary data appearing in \eqref{eq:pss-equation} have the following form:
\begin{itemize} \itemsep.5em
\item 
$(H_{s,t}^{B}, J_{s,t}^{B}) = (H_t, J_t)$ for $s \ll 0$. For $s \gg 0$, the family $J_{s,t}^{B}$ extends smoothly over $+\infty$, and $H_{s,t}^{B}$ vanishes. The boundary behaviour of the almost complex structures is as usual. For the functions, we require that (to second order) $H_{s,t}^{B} = \chi(s)\scrH$, where $\chi(s)$ is a nonincreasing function, equal to $\epsilon$ for $s \ll 0$ and to $0$ for $s \gg 0$.

\item The function $f_s^B$ equals $f$ for $s \gg 0$, and also agrees with $f$ near $\partial M$. Similarly, the metrics satisfy $g_s^B = g$ for $s \gg 0$.
\end{itemize}
By arguing as in Lemma \ref{th:levi} (with $S = (\bR \times S^1) \cup \{+\infty\}$ and  $\beta = \chi(s) \mathit{dt}$), one sees that any solution $u$ remains in the interior of $M$. Therefore, the point $z(0)$ lies in the interior, which implies that the same holds for all of $z$. By counting (for generic choices of all the auxiliary data) isolated solutions of \eqref{eq:pss-equation}, with the usual signs and powers of the Novikov variable $q$, one defines a chain map
\begin{equation} \label{eq:chain-b}
B: \mathit{CM}^*(f) \longrightarrow \mathit{CF}^*(H).
\end{equation}
A similar construction, with an added parameter, shows that \eqref{eq:chain-b} is independent of all choices up to chain homotopy. In the same sense, it is compatible with the continuation maps that relate different choices of $(f,g)$ and $(H,J)$. Hence, the induced cohomology level map is canonical. Obviously, in this generality, it is not an isomorphism. 

\begin{lemma} \label{th:fhs-pss}
For sufficiently small $\epsilon>0$, there is a choice of time-independent $(H,J)$ to which \eqref{eq:chain-iso} applies, and for which the composition of that isomorphism and the Morse-theoretic continuation map $\mathit{CM}^*(f) \rightarrow \mathit{CM}^*(H)$ recovers \eqref{eq:chain-b} up to chain homotopy.
\end{lemma}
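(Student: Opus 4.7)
The idea is to compute the PSS map $B$ using a specific, highly symmetric choice of auxiliary data adapted to the FHS setup, and show that with this choice the PSS moduli problem reduces to a Morse continuation problem.

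\emph{Step 1 (compatible choices).} Fix $(H, J)$ as in the first approach above (FHS), so that \eqref{eq:chain-iso} applies. Because $(H, J)$ is $t$-independent and $(H_{s,t}^B, J_{s,t}^B)$ is only required to match $(H_t, J_t)$ as $s \to -\infty$, I will choose the PSS auxiliary data on the cylinder to be $t$-independent as well: $H_s^B = \chi(s)\scrH$ to second order with $\chi$ nonincreasing and equal to $\epsilon$ resp.\ $0$ at the two ends, and $J_s^B$ a smooth $t$-independent homotopy from $J$ at $s \ll 0$ to some reference almost complex structure at $+\infty$ (extended smoothly across the compactification point). The Morse half-line data $(f_s^B, g_s^B)$ is taken as prescribed.

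\emph{Step 2 (reduction of $t$-independent solutions to a Morse continuation).} If $(u, z)$ solves \eqref{eq:pss-equation} with $u(s, t) = \tilde u(s)$ independent of $t$, then the Floer equation on the cylinder degenerates to the negative gradient flow $\partial_s \tilde u + \nabla_{g^B_s} H^B_s = 0$ (using $J^B_s$-compatibility of the metric), and the compatibility $\tilde u(+\infty) = z(0)$ together with $z$ splices this into a single solution of a standard Morse continuation problem on all of $\bR$, for a family of Morse data interpolating from $(H, g_J)$ at $-\infty$ to $(f, g)$ at $+\infty$. With generic perturbations of $(H_s^B, J_s^B, f_s^B, g_s^B)$ in the interior this interpolation can be made Morse--Smale, and the count of these $t$-independent solutions (with the correct signs) is precisely the matrix coefficient of the Morse continuation map $\mathit{CM}^*(f) \to \mathit{CM}^*(H)$.

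\emph{Step 3 (the analytical heart: all rigid solutions are $t$-independent for small $\epsilon$).} This is the main obstacle. I would adapt the FHS argument underpinning the fifth and sixth bullets on p.~\pageref{subsec:morse-floer}. The inputs are: (i) an a priori energy bound coming from \eqref{eq:energy} applied to the PSS surface, which is controlled in terms of $\epsilon$ and the action defect between $x$ and $y$; (ii) a $C^0$/$C^1$ compactness statement for solutions of small energy, using the Levi-convexity bound of Lemma \ref{th:levi} (extended to the PSS surface with one-form $\beta = \chi(s)\, dt$, for which $d\beta = \chi'(s)\, ds \wedge dt \leq 0$) to keep solutions away from $\partial M$; and (iii) a bubbling-off argument that, as in \cite{floer-hofer-salamon94, hofer-salamon95}, shows that a sequence of non-$t$-independent rigid PSS solutions with $\epsilon \to 0$ would subconverge, after reparametrization, either to a nontrivial $J$-holomorphic sphere or to a $t$-dependent Floer trajectory of $H$ of index $\leq 1$ appearing as a bubble; both possibilities are ruled out for the FHS choice of $(H, J)$ by the last two bullets of the FHS characterization. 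The regularity of the $t$-independent solutions as Morse trajectories then follows as in \cite{floer-hofer-salamon94}.

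\emph{Step 4 (conclusion).} Steps 2 and 3 together show that, for small $\epsilon$ and this choice of PSS data, the chain map \eqref{eq:chain-b} literally coincides with the composition of the Morse continuation map $\mathit{CM}^*(f) \to \mathit{CM}^*(H)$ and the identification \eqref{eq:chain-iso}. Since both $B$ and the Morse continuation map are independent of all auxiliary choices up to chain homotopy, the stated equivalence holds for arbitrary data. I expect Step 3 to be the only genuinely delicate point; Steps 1, 2 and 4 are essentially bookkeeping.
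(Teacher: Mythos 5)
Your proposal follows essentially the same route as the paper's sketch: choose the PSS auxiliary data $(H^B, J^B)$ to be $t$-independent, invoke FHS-type transversality/bubbling arguments to show that for small $\epsilon$ all rigid solutions of \eqref{eq:pss-equation} are $t$-independent and regular (and $J$-holomorphic spheres miss them), and identify the resulting count with the Morse-theoretic continuation map composed with \eqref{eq:chain-iso}. The paper is a bit more careful at one point: the $t$-independent rigid solutions are a priori two-piece broken gradient objects as in \eqref{eq:morse-continuation} (an $s$-dependent gradient flow for $H^B_s$ joined to a half-flow-line for $f$), which is not literally the standard Morse continuation map but only chain homotopic to it, so your Step 2 claim that the count ``is precisely the matrix coefficient of the Morse continuation map'' is a slight overstatement—though your Step 4 appeal to ``up to chain homotopy'' absorbs it.
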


\begin{proof}[Sketch of proof]
The argument is essentially a retread of \cite{floer-hofer-salamon94}, hence will only be outlined.
We consider time-independent $(H,J)$, and similarly choose $(H^B,J^B)$ in \eqref{eq:pss-equation} to be $t$-independent, while not imposing any constraints on the Morse theory side. One can achieve that:
\begin{itemize} \itemsep.5em
\item $(H,J)$ has all the conditions required for \eqref{eq:chain-iso};

\item Any solution of \eqref{eq:pss-equation} has nonnegative expected dimension. Moreover, the solutions with expected dimension zero are all $t$-independent, and regular.

\item For any $s \in \bR \cup \{\infty\}$, all $J_s^B$-holomorphic spheres avoid the points $u(s,t)$, where $u$ is a solution of \eqref{eq:pss-equation} with expected dimension zero.
\end{itemize}
If one adopts such a choice, isolated solutions of \eqref{eq:pss-equation} reduce to broken flow lines, of the form
\begin{equation} \label{eq:morse-continuation}
\left\{
\begin{aligned}
& u: \bR \cup \{+\infty\} \longrightarrow M, \\
& \partial_s u = 0 \quad \text{for $s \gg 0$,} \\
& z: [0,\infty) \longrightarrow M, \\
& u(+\infty) = z(0), \\
& \textstyle\lim_{s \rightarrow -\infty} u(s) = x, \\
& \textstyle\lim_{s \rightarrow +\infty} z(s) = y.
\end{aligned}
\right.
\end{equation}
We have omitted the ODE which $u$ and $z$ satisfy (both are $s$-dependent gradient equations). While \eqref{eq:morse-continuation} may not be the standard definition of a Morse-theoretic continuation map, it is chain homotopic to that map. 
\end{proof}

As a consequence of Lemma \ref{th:fhs-pss}, the PSS map in that particular instance is an isomorphism on cohomology; on the other hand, it follows that then, \eqref{eq:chain-iso} agrees with the PSS map on cohomology, hence fits into the general framework of canonical isomorphisms.

\section{The $q$-connection\label{sec:q}}

This section is the core of the paper. We introduce operations on $S^1$-equivariant Hamiltonian Floer cohomology, which constitute a rudimentary Cartan homotopy formalism. Just like in the classical definition of the Gauss-Manin connection, or in Getzler's noncommutative geometry version, the $q$-connection arises by combining that formalism with ``naive'' differentiation.

\subsection{Structure of the equivariant theory\label{subsec:formal}}
We continue in the geometric setup of the previous section. Define 
\begin{equation} \label{eq:cf-eq}
\mathit{CF}^*_{\mathit{eq}}(H) = \mathit{CF}^*(H)[[u]], 
\end{equation}
where $u$ is a formal variable of degree $2$. We will introduce $\Lambda[[u]]$-linear endomorphisms of this space, of the form
\begin{align}
\label{eq:d-eq}
& d_{\mathit{eq}} = d + u\Delta + O(u^2) : \mathit{CF}^*_{\mathit{eq}}(H) \longrightarrow \mathit{CF}^{*+1}_{\mathit{eq}}(H), \\
\label{eq:lambda-eq}
& \lambda_{\mathit{eq}} = \lambda + O(u) : \mathit{CF}^*_{\mathit{eq}}(H) \longrightarrow \mathit{CF}^{*+1}_{\mathit{eq}}(H), \\
\label{eq:iota-eq}
& \iota_{\mathit{eq}} = \iota + O(u): \mathit{CF}^*_{\mathit{eq}}(H) \longrightarrow \mathit{CF}^{*+2}_{\mathit{eq}}(H).
\end{align}
They will satisfy a kind of Cartan homotopy formalism:
\begin{align}
\label{eq:deq2}
& d_{\mathit{eq}}^2 = 0, \\
\label{eq:lambdadeq}
& d_{\mathit{eq}} \lambda_{\mathit{eq}} + \lambda_{\mathit{eq}} d_{\mathit{eq}} = 0, \\
\label{eq:cartan}
& d_{\mathit{eq}} \iota_{\mathit{eq}} - \iota_{\mathit{eq}} d_{\mathit{eq}} = u\lambda_{\mathit{eq}}.
\end{align}
These equations are higher-order extensions of \eqref{eq:delta-squared}, \eqref{eq:delta-lambda}, and \eqref{eq:iota-lambda}, respectively; the higher-order correction terms include our original chain homotopies, which is why we get equalities. 

Before discussing their construction, let's note some consequences. Define $\mathit{HF}^*_{\mathit{eq}}(M,\epsilon)$ to be the cohomology of $d_{\mathit{eq}}$. This clearly sits in a long exact sequence \eqref{eq:u-sequence}. By \eqref{eq:lambdadeq}, $\lambda_{\mathit{eq}}$ induces an endomorphism of $\mathit{HF}^*_{\mathit{eq}}(M,\epsilon)$, and \eqref{eq:cartan} shows that this endomorphism vanishes after multiplication with $u$. In fact, \eqref{eq:cartan} implies that for an equivariant cocycle $x = x_0 + ux_1 + \cdots$,
\begin{equation}
\lambda_{\mathit{eq}}(x) = u^{-1} d_{\mathit{eq}} \iota_{\mathit{eq}}(x_0) + d_{\mathit{eq}} \iota_{\mathit{eq}}(x_1 + ux_2 + \cdots).
\end{equation}
This shows that the cohomology level map induced by $\lambda_{\mathit{eq}}$ is the composition of two maps from \eqref{eq:u-sequence} and the map induced by $\iota$, in the following order:
\begin{equation}
\xymatrix{ 
\ar@/^1.5pc/[rrr]^-{\lambda_{\mathit{eq}}} 
\mathit{HF}^*_{\mathit{eq}}(M,\epsilon) \ar[r] & \mathit{HF}^*(M,\epsilon) \ar[r]_-{\iota} & \mathit{HF}^{*+2}(M,\epsilon) \ar[r] & \mathit{HF}^{*+1}_{\mathit{eq}}(M,\epsilon).
}
\end{equation}
In a sense, this is disappointing, since it means that (on the cohomology level) $\lambda_{\mathit{eq}}$ is not a genuinely new operation in the equivariant theory, but rather derived from its relation with ordinary Floer cohomology.

\begin{remark} \label{th:all-classes}
For the arguments so far, we could have used any cohomology class on $M$ instead of $q^{-1}[\Omega]$. More systematically, one can generalize \eqref{eq:lambda-eq} and \eqref{eq:iota-eq} to operations
\begin{align} 
\label{eq:extended-lambda}
& C^* \otimes \mathit{CF}^*_{\mathit{eq}}(H) \longrightarrow \mathit{CF}^{*-1}_{\mathit{eq}}(H), \\
\label{eq:extended-iota}
& C^* \otimes \mathit{CF}^*_{\mathit{eq}}(H) \longrightarrow \mathit{CF}^{*}_{\mathit{eq}}(H),\end{align}
where $C^*$ is a suitable chain complex underlying $H^*(M;\Lambda)$ (to strictly generalize our approach, this complex should admit submanifolds as cycles; however, other choices, such as Morse homology, may be technically easier). The first of these is a chain map, and the second satisfies an analogue of \eqref{eq:cartan}. 

A short digression may be permitted at this point. Let's place ourselves in the context of Application \ref{th:contact}. Consider symplectic cohomology $\mathit{SH}^*(M)$, and its underlying chain complex $\mathit{SC}^*(M)$, as well as the equivariant versions $\mathit{SH}^*_{\mathit{eq}}(M)$ and $\mathit{SC}^*_{\mathit{eq}}(M)$. Then, one can construct operations
\begin{align} 
\label{eq:extended-lambda-2}
& [\cdot,\cdot]_{\mathit{eq}}: \mathit{SC}^*(M) \otimes \mathit{SC}^*_{\mathit{eq}}(M) \longrightarrow \mathit{SC}^{*-1}_{\mathit{eq}}(M), \\
\label{eq:extended-iota-2}
& \bullet_{\mathit{eq}}: \mathit{SC}^*(M) \otimes \mathit{SC}^*_{\mathit{eq}}(M) \longrightarrow \mathit{SC}^{*}_{\mathit{eq}}(M),
\end{align}
which satisfy
\begin{align}
& d_{\mathit{eq}}[x_1,x_2]_{\mathit{eq}} + [dx_1,x_2]_{\mathit{eq}} + (-1)^{|x_1|} [x_1,d_{\mathit{eq}}x_2]_{\mathit{eq}} = 0,  \\
& u[x_1,x_2]_{\mathit{eq}} - d_{\mathit{eq}}(x_1 \bullet_{\mathit{eq}}x_2) + dx_1 \bullet_{\mathit{eq}} x_2 + (-1)^{|x_1|} x_1 \bullet_{\mathit{eq}} d_{\mathit{eq}}x_2 = 0.
\end{align}
One recovers \eqref{eq:extended-lambda} and \eqref{eq:extended-iota} (in a suitable chain homotopy sense) from these by composing with PSS maps $C^* \rightarrow \mathit{SC}^*(M)$ in the first entry. On the cohomology level, the outcome is that $\mathit{SH}^{*+1}(M)$ acts on $\mathit{SH}^*_{\mathit{eq}}(M)$ via \eqref{eq:extended-lambda-2}; and that action becomes trivial after multiplying with $u$. This parallels the situation in noncommutative geometry, involving Hochschild cohomology acting on (negative) cyclic homology.
\end{remark}

\subsection{The equivariant differential}
Each construction in equivariant Floer cohomology amounts to setting up an infinite hierarchy of parametrized moduli spaces with suitable recursive properties. Our basic organizing principle will be to use the spaces of Morse trajectories from Section \ref{sec:morse} as parameter spaces. At least in the case of the equivariant differential, the construction is not new, but we reproduce it here since it serves as the model for all subsequent arguments. We will give two versions of the definition, where the difference is mainly one of the language used.

{\em First definition.} (This is close to the approach in \cite{bourgeois-oancea12}.)
At each point $w \in S^\infty$, choose a Hamiltonian and almost complex structure
\begin{equation} \label{eq:jeq}
(H^{\mathit{eq}}_w, J^{\mathit{eq}}_w),
\end{equation}
smoothly depending on $w$, and subject to the following conditions:
\begin{itemize}
\itemsep.5em
\item The choice should be invariant under shift: $(H_{\sigma(w)}^{\mathit{eq}},J_{\sigma(w)}^{\mathit{eq}}) = (H_w^{\mathit{eq}},J_w^{\mathit{eq}})$.

\item With respect to $q^{-1}(c_0) \iso S^1$, the restriction of $(H^{\mathit{eq}},J^{\mathit{eq}})$ to that fibre should agree with the previously chosen $(H_t,J_t)$.

\item In a neighbourhood of each $c_k$, the family $(H_w^{\mathit{eq}},J_w^{\mathit{eq}})$ should be preserved by parallel transport for our chosen connection $A$ (this makes sense because the connection is assumed to be flat locally near $c_k$).
\end{itemize}

Let $v$ be a negative gradient flow line, going from $c_k$ ($k>0$) to $c_0$. By parallel transport for the connection $A$, we get a trivialization of \eqref{eq:q-bundle} over $v$. This trivialization is a map $\tilde{v}$ fitting into a commutative diagram
\begin{equation} \label{eq:tilde-v}
\xymatrix{
\ar[d] \bR \times S^1 \ar[rr]^-{\tilde{v}} && S^\infty \ar[d]^-{q} \\
\bR \ar[rr]^-{v} && \bC P^\infty.
}
\end{equation}
Then,
\begin{equation} \label{eq:deq-data}
(H_{v,s,t}^{d_{\mathit{eq}}}, J_{v,s,t}^{d_{\mathit{eq}}}) = (H^{\mathit{eq}}_{\tilde{v}(s,t)}, J^{\mathit{eq}}_{\tilde{v}(s,t)})
\end{equation}
satisfies \eqref{eq:tau-hj} for $\tau = \alpha_k(v)$, where $\alpha_k$ are the functions defined in the first proof of Lemma \ref{th:alpha}. We consider the moduli space of pairs $(v,u)$, where $v$ is a flow line and $u$ a solution of the equation \eqref{eq:floer-2} associated to the data \eqref{eq:deq-data}, divided by common translation (which reparametrizes $v$ and $u$ simultaneously). Counting points in this space (with signs and powers of $q$, as in the definition of the Floer differential) yields an operation
\begin{equation} \label{eq:deq-k}
d_{\mathit{eq},k}: \mathit{CF}^*(H) \longrightarrow \mathit{CF}^{*+1-2k}(H).
\end{equation}
By the first part of Lemma \ref{th:alpha}, $d_{\mathit{eq},1}$ is indeed (a valid choice for) the BV operator. Because of the inductive structure of the boundary \eqref{eq:boundary-pk-d}, the maps \eqref{eq:deq-k} satisfy the equations
\begin{equation} \label{eq:pre-dsquared}
-d d_{\mathit{eq},k} - d_{\mathit{eq},k} d\,\, - \!\!\! \sum_{\substack{k_1+k_2 = k \\ k_1,k_2>0}} d_{\mathit{eq},k_1} d_{\mathit{eq},k_2} = 0.
\end{equation}
Concerning the signs in \eqref{eq:pre-dsquared}, the first two come directly from \eqref{eq:sign-1}, and the last one from \eqref{eq:added-koszul} together with Proposition \ref{th:orientations}(i). It is natural to extend the definition to $k = 0$ by setting $d_{\mathit{eq},0} = d$. Then, \eqref{eq:pre-dsquared} just says that the following expression satisfies \eqref{eq:deq2}:
\begin{equation} \label{eq:taylor}
d_{\mathit{eq}} = d_{\mathit{eq},0} + u d_{\mathit{eq},1} + \cdots
\end{equation}

{\em Second definition.}
The second version of the construction is a little more detached from the specific Morse-theoretic construction of the parameter spaces $P_k$. Instead of starting with $S^\infty$ and then pulling back data from there to the moduli spaces of flow lines, we use the fact that the $P_k$ carry tautological families \eqref{eq:tautological}, and make our choices directly on the total spaces of those families.
Fix arbitrary functions $\alpha_k$ which satisfy the properties from Lemma \ref{th:alpha}. Suppose that for each  $[v] \in P_k$ and preimage $y \in \scrP_k$, we have chosen a family 
\begin{equation} \label{eq:x-family}
(H_{y,t}^{d_\mathit{eq}}, J_{y,t}^{d_\mathit{eq}})_{t \in S^1}
\end{equation}
with the following properties:
\begin{itemize}
\itemsep.5em
\item
If $y$ is sufficiently close to some point $z \in \bar\scrP_k \setminus \scrP_k$, \eqref{eq:x-family} should agree with $(H_t,J_t)$ up to a rotation in $S^1$-direction, which means that $(H_{y,t}^{d_{\mathit{eq}}}, J_{y,t}^{d_{\mathit{eq}}}) = (H_{t+\tau(y)}, J_{t+\tau(y)})$. Moreover, the amount of rotation $\tau(y) \in S^1$ should be locally constant under the $\bR$-action on $\scrP_k$. In the case $z = y_-([v])$, this amount of rotation should be zero, while for $z = y_+([v])$ it should be equal to $\alpha_k([v])$.

\item
If we consider a point $([v_1],[v_2]) \in (P_{k_1} \setminus \partial P_{k_1}) \times (P_{k_2} \setminus \partial P_{k_2}) \subset \partial P_k$, and a preimage $y \in \scrP_k$ in the $\bR$-component belonging to $[v_1]$, then \eqref{eq:x-family} agrees with the corresponding family associated to $y$ as a point of $\scrP_{k_1}$. In the same situation, if $y$ lies in the $\bR$-component belonging to $[v_2]$, \eqref{eq:x-family} agrees with the corresponding family over $\scrP_{k_2}$ up to rotation by $\alpha_{k_1}([v_1])$ in $S^1$-direction. The analogous property holds for broken trajectories with more than two components.
\end{itemize}
These conditions can easily be met by a recursive construction. In a nutshell, because of the second condition, the restriction of \eqref{eq:x-family} to $\partial P_l$ is completely determined by the choices made for $P_k$, $k<l$; and one then extends that to the interior, using the fact that the space of overall choices is contractible (of course, additional care must be exercised near $\bar\scrP_l \setminus \scrP_l$). 

Fix $[v] \in P_k \setminus \partial P_k$, and choose a representative $v$. This is the same as fixing a parametrization of the preimage of $[v]$ in $\scrP_k$, which we write as $s \mapsto y(s)$. From \eqref{eq:x-family}, we then obtain an analogue of \eqref{eq:deq-data}, this time defined by
\begin{equation} \label{eq:new-h-v}
(H_{v,s,t}^{d_{\mathit{eq}}}, J_{v,s,t}^{d_{\mathit{eq}}}) = (H_{y(s),t}^{d_{\mathit{eq}}}, J_{y(s),t}^{d_{\mathit{eq}}}),
\end{equation}
which again satisfies \eqref{eq:tau-hj} with $\tau = \alpha_k([v])$. Suppose that we have a sequence $[v^i]$ of such flow lines, which converges to a broken flow line $([v_1],[v_2]) \in (P_{k_1} \setminus \partial P_{k_1}) \times (P_{k_2} \setminus \partial P_{k_2})$. Choose representatives $v^i$ and $v_1,v_2$ as well as $\sigma^i_1,\sigma^i_2 \in \bR$ with $\sigma^i_2 - \sigma^i_1 \rightarrow \infty$, such that
\begin{equation}
\left\{
\begin{aligned}
& v^i(s+ \sigma^i_1) \longrightarrow v_1(s), \\
& v^i(s + \sigma^i_2) \longrightarrow v_2(s)
\end{aligned}
\right.
\end{equation}
(in the sense of uniform $\smooth$-convergence on compact subsets). The corresponding convergence statement for \eqref{eq:new-h-v} says that (in the same sense as before)
\begin{equation} \label{eq:convergence-1}
\left\{
\begin{aligned}
& (H_{v^i,s+\sigma^i_1,t}^{d_{\mathit{eq}}}, J_{v^i,s+\sigma^i_1,t}^{d_{\mathit{eq}}}) \longrightarrow (H_{v_1,s,t}^{d_{\mathit{eq}}}, J_{v_1,s,t}^{d_{\mathit{eq}}}), \\
& (H_{v^i,s+\sigma^i_2,t}^{d_{\mathit{eq}}}, J_{v^i,s+\sigma^i_2,t}^{d_{\mathit{eq}}}) \longrightarrow (H_{v_2,s,t+\alpha_{k_1}([v_1])}^{d_{\mathit{eq}}}, J_{v_2,s,t+\alpha_{k_1}([v_1])}^{d_{\mathit{eq}}} ).
\end{aligned}
\right.
\end{equation}
If one fixes a sufficiently large $L$ and restricts attention to $i \gg 0$, then 
\begin{equation} \label{eq:convergence-2}
(H_{v^i,s,t}^{d_{\mathit{eq}}}, J_{v^i,s,t}^{d_{\mathit{eq}}}) = 
\begin{cases}
(H_t,J_t) & s \in (-\infty, \sigma^i_1 - L], \\
(H_{t+\tau^i}, J_{t+\tau^i}) & s \in [\sigma^i_1 + L, \sigma^i_2 - L], \\
(H_{t+\alpha_k([v^i])}, J_{t+\alpha_k([v^i])}) & s \in [\sigma^i_2 + L,\infty),
\end{cases}
\end{equation}
where the $\tau^i \in S^1$ themselves converge to $\alpha_{k_1}([v_1])$. Each case in \eqref{eq:convergence-2} corresponds to a region where $y^i(s)$ is close to a point of $\bar\scrP_k \setminus \scrP_k$; this point is $y_-([v^i])$ in the first case and $y_+([v^i])$ in the third case, explaining the particularly simple nature of the formulae given there. The combination of \eqref{eq:convergence-1} and \eqref{eq:convergence-2} describes the limiting behaviour of $(H_{v^i,s,t}^{d_{\mathit{eq}}}, J_{v^i,s,t}^{d_{\mathit{eq}}})$ uniformly on all of $\bR \times S^1$. This kind of description generalizes to limits which are broken flow lines with an arbitrary number of components.

To define $d_{\mathit{eq}}$, we again consider pairs $(v,u)$, where $[v] \in P_k \setminus \partial P_k$, and $u$ is a solution of the equation \eqref{eq:floer-2} associated to \eqref{eq:new-h-v}. Given our previous discussion of the limiting behaviour of \eqref{eq:new-h-v} as one approaches $\partial P_k$, it is clear how to implement the necessary compactness argument; a similar strategy applies to gluing issues.

We end by comparing the two versions of the definition: the first one is  a special case of the second. Namely, take $\alpha_k$ defined by parallel transport for the connection $A$. Supposing that \eqref{eq:jeq} have been chosen, we define \eqref{eq:x-family} by
\begin{equation}
(H_{y,t}^{d_\mathit{eq}}, J_{y,t}^{d_\mathit{eq}}) = (H^{\mathit{eq}}_{e^{2\pi i t}\tilde{v}(y)}, J^{\mathit{eq}}_{e^{2\pi i t}\tilde{v}(y)}),
\end{equation}
where $\tilde{v}(y) \in S^\infty$ is defined as follows: first use the evaluation map $\scrP_k \rightarrow \bC P^\infty$ to associate to $y$ a point $v(y) \in \bC P^\infty$, and then use parallel transport along the flow line from $c_k$ to $v(y)$ to determine a preferred lift $\tilde{v}(y) \in S^\infty$. Because the evaluation map is smooth, this indeed yields a smooth family, which satisfies all our desired properties; and if one then considers the associated data \eqref{eq:new-h-v}, those agree with \eqref{eq:deq-data}. 

\begin{remark}
One can also ask the converse question, namely whether the choice \eqref{eq:deq-data} is indeed less general than \eqref{eq:x-family} (making the second definition genuinely more flexible). The answer is yes, but a precise understanding of the amount of additional flexibility hinges on tricky technicalities. Roughly speaking, our first approach was to choose a (time-dependent) almost complex structure and Hamiltonian for each point of $\bC P^\infty$; and our second approach was to choose one such structure for each point on a nonconstant gradient flow line. Ignoring critical points (where we have extra constraints anyway), any point of $\bC P^\infty$ lies on a unique such flow line. However, the notions of smoothness used in the two versions are not the same: in the first one the smooth structure of $\bC P^\infty$ is used, while the second one involves the smooth structures on compactified trajectory spaces. 
\end{remark}

\subsection{The operation $\lambda_{\mathit{eq}}$}
The construction of this operation is entirely parallel to that of $d_{\mathit{eq}}$, but using $P_k^s$ instead of $P_k$. The additional information provided by having $P_k^s$ as a parameter space is used to implement an incidence condition. 

{\em First definition.}
Let's suppose that the first construction of the equivariant differential has been adopted, with data \eqref{eq:jeq}. Given a point $(v,w) \in P_k^s \setminus \partial P_k^s$, we use the same associated data \eqref{eq:deq-data} as before, but (for consistency) change notation to
\begin{equation} \label{eq:lambdaeq-data}
(H_{v,w,s,t}^{\lambda_{\mathit{eq}}}, J_{v,w,s,t}^{\lambda_{\mathit{eq}}}) = 
(H_{\tilde{v}(s,t)}^{\mathit{eq}}, J_{\tilde{v}(s,t)}^{\mathit{eq}}).
\end{equation}
One considers solutions of the associated equation \eqref{eq:floer-2}, with the condition
\begin{equation} \label{eq:beta-evaluation}
u(0,\beta_k^s(v,w)) \in q^{-1}\Omega.
\end{equation}
Here, $\beta_k^s$ are the maps from the first proof of Lemma \ref{th:alphabeta-s}. For $k = 0$ the flow line is constant, \eqref{eq:lambdaeq-data} reduces to $(H_t,J_t)$, and \eqref{eq:beta-evaluation} to \eqref{eq:incidence-2}, except for orientations: in \eqref{eq:incidence-2}, the point of evaluation on $\bR \times S^1$ goes in negative direction around the circle, whereas in \eqref{eq:beta-evaluation} it proceeds positively, assuming we have oriented $P_0^s$ as in Section \ref{subsec:topology}. To account for that discrepancy, we will define $\lambda_{\mathit{eq}}$ using {\em the opposite orientation of $P_k^s$ for all $k$.} In parallel to our previous discussion of the differential, this yields maps
\begin{equation} \label{eq:lambda-k}
\lambda_{\mathit{eq},k}: \mathit{CF}^*(H) \longrightarrow \mathit{CF}^{*+1-2k}(H),
\end{equation}
of which the simplest ($k = 0$) one agrees with $\lambda$. These maps satisfy
\begin{equation} \label{eq:pre-lambda}
\sum_{k_1 + k_2 = k} d_{\mathit{eq},k_1} \lambda_{\mathit{eq},k_2} +
\sum_{k_1 + k_2 = k} \lambda_{\mathit{eq},k_1} d_{\mathit{eq},k_2} = 0.
\end{equation}
The geometry underlying that relation is the description of the boundary \eqref{eq:boundary-pk-s}, with signs coming from our general conventions and Lemma \ref{th:orientations}(iii). One combines the $\lambda_{\mathit{eq},k}$ into a $u$-Taylor series to define $\lambda_{\mathit{eq}}$.

{\em Second definition.} Take functions $(\alpha_k^s,\beta_k^s)$ as in Lemma \ref{th:alphabeta-s}. Suppose that the equivariant differential has been defined using some choice \eqref{eq:x-family}. We then similarly proceed to choose, for each $(v,w) \in P_k^s$ and preimage $y \in \scrP_k^s$, data
\begin{equation}
\label{eq:x-family-2}
(H_{y,t}^{\lambda_\mathit{eq}}, J_{y,t}^{\lambda_\mathit{eq}})_{t \in S^1},
\end{equation}
subject to conditions that are entirely analogous to the previous ones. The only (fairly obvious) difference is that, on the preimage of a point of $\partial P_k^s$, the behaviour of \eqref{eq:x-family-2} is governed by the previous choices of the same kind (for lower values of $k$) on exactly one connected component (corresponding to the part of the broken flow line which contains the marked point), and by \eqref{eq:x-family} on the other components. For $(v,w) \in P_k^s \setminus \partial P_k^s$ one then defines, in analogy with \eqref{eq:new-h-v},
\begin{equation} \label{eq:lambdaeq-h}
(H_{v,w,s,t}^{\lambda_{\mathit{eq}}}, J_{v,w,s,t}^{\lambda_{\mathit{eq}}}) =
(H_{y(s),t}^{\lambda_\mathit{eq}}, J_{y(s),t}^{\lambda_\mathit{eq}}),
\end{equation}
where $y(s) \in \scrP_k^s$ is the preimage corresponding to the point $v(s)$ on the gradient flow line $v$. Equivalently but in slightly more abstract terms, the choice of $y(s)$ is normalized so that for $s = 0$, it gives back the canonical section $y_*: P_k^s \rightarrow \scrP_k^s$. The evaluation condition is again of the form \eqref{eq:beta-evaluation}, and the rest of the construction proceeds as before. In parallel with the situation of the differential, the first construction is a special case of the second one (this time, a lot more flexibility is allowed in the second approach; this was already true for the two proofs of Lemma \ref{th:alphabeta-s}).

\subsection{The operation $\iota_{\mathit{eq}}$}
This follows exactly the same method as for $\lambda_{\mathit{eq}}$, but using the spaces $R_k^b$ from \eqref{eq:break-3}. Because of the more abstract nature of those spaces, the second approach works better at this point, so we will stick to that. Namely, we choose functions as in Lemma \ref{th:alphabeta-p}, and consider the tautological family $\scrR_k^b$. At each point $y \in \scrR_k^b$, we choose
\begin{equation}
\label{eq:x-family-3}
(H_{y,t}^{\iota_\mathit{eq}}, J_{y,t}^{\iota_\mathit{eq}})_{t \in S^1},
\end{equation}
subject to the same conditions as before, and which of course must restrict to the corresponding family on $P_{k-1}^s$ over that boundary face. The definition of the associated maps \eqref{eq:floer-2} proceeds exactly as in \eqref{eq:lambdaeq-h}, and we use the same evaluation condition. This leads to operations
\begin{equation}
\iota_{\mathit{eq},k}: \mathit{CF}^*(H) \longrightarrow \mathit{CF}^{*+2-2k}(H),
\end{equation}
which reduce to $\iota$ for $k = 0$. Because of the boundary structure \eqref{eq:break-3}, these satisfy
\begin{equation}
\sum_{k_1+k_2=k} d_{\mathit{eq},k_1} \iota_{\mathit{eq},k_2} - \sum_{k_1+k_2=k} \iota_{\mathit{eq},k_1} d_{\mathit{eq},k_2} - \lambda_{\mathit{eq},k-1} = 0.
\end{equation}
The Koszul signs \eqref{eq:added-koszul} disappear, because one of the two parameter spaces involved is always even-dimensional. What remains is a single $-1$ sign in front of the $\iota_{\mathit{eq},k_1} d_{\mathit{eq},k_2}$ term, for $k_2>0$, which comes from Lemma \ref{th:r-orientation}. Finally, the sign in front of $\lambda_{\mathit{eq},k-1}$ arises from the orientation-reversal convention we adopted when defining that operation. 

\subsection{The differentiation property}
At this point, we pick up the thread initiated in \eqref{eq:concrete-lambda}, which justifies the special role afforded to the class $q^{-1}[\Omega]$ in our setup. Namely, let $\partial_q$ be the operation of differentiation in $q$-direction, acting on $\mathit{CF}^*(H)$. This makes sense because, as a free $\Lambda$-module, $\mathit{CF}^*(H)$ carries a canonical basis (up to signs). Differentiation does not commute with the boundary operator: instead, we have
\begin{equation} \label{eq:diff-d}
\partial_q d - d \partial_q = \lambda.
\end{equation}
To be precise, this holds exactly provided that $\lambda$ has been defined by counting solutions of Floer's equation with the additional condition \eqref{eq:incidence-2}; in fact, the left hand side is precisely \eqref{eq:concrete-lambda}. The property \eqref{eq:diff-d} is an instance of a general idea which, in \cite{seidel16}, was called the ``differentiation axiom'' (in Gromov-Witten theory, a corresponding property is implied by the divisor axiom). There is also an equivariant refinement:
\begin{equation} \label{eq:diff-deq}
\partial_q d_{\mathit{eq}} - d_{\mathit{eq}} \partial_q = \lambda_{\mathit{eq}}.
\end{equation}
This assumes that the first version of the definition of $d_{\mathit{eq}}$ and $\lambda_{\mathit{eq}}$ has been used. Given that, the proof is exactly the same as for \eqref{eq:diff-d}. As an immediate consequence of \eqref{eq:diff-deq} and \eqref{eq:cartan}, the homomorphism
\begin{equation} \label{eq:define-gm}
\begin{aligned}
& \Gamma_q: \mathit{CF}^*_{\mathit{eq}}(H) \longrightarrow \mathit{CF}^{*+2}_{\mathit{eq}}(H), \\
& \Gamma_q(x) = u \partial_q x + \iota_{\mathit{eq}}(x)
\end{aligned}
\end{equation}
is a chain map. We define the $q$-connection to be the induced map on cohomology, using the same notation for it. This definition clearly satisfies the property from \eqref{eq:q-connection}, already on the chain level. The commutativity of \eqref{eq:q-zero} is also obvious, since $\Gamma_q(x) = \iota(x) + O(u)$. We can also immediately address the remaining properties mentioned in Section \ref{subsec:u}. Namely, if $\omega$ is exact, one can choose the cycle $\Omega$ to be empty, in which case both $\lambda_{\mathit{eq}}$ and $\iota_{\mathit{eq}}$ vanish, yielding \eqref{eq:exact}. Similarly, if $\omega \cdot H_2(M;\bZ) = m\bZ$ for some integer $m \geq 2$, one can choose $\Omega$ so that all its components have multiplicities in $m\bZ$. Then, $\lambda_{\mathit{eq}}$ and $\iota_{\mathit{eq}}$ vanish modulo $m$, leading to \eqref{eq:exact-mod-m}.

\begin{remark} \label{th:equi-connection}
Let's briefly explain the expected situation for symplectic cohomology, continuing the discussion from Remark \ref{th:all-classes} (this is just an outline; the details, which would require a combination of the techniques from here and \cite{seidel16}, remain to be carried out). One has a chain homotopy
\begin{equation} \label{eq:rhotopy}
\lambda_{\mathit{eq}} \htp [k,\cdot]_{\mathit{eq}},
\end{equation}
where $k \in \mathit{SC}^2(M)$ is a cocycle representing the image of $q^{-1}[\Omega]$ under the PSS map. Denoting the chain homotopy in \eqref{eq:rhotopy} by $\rho_{\mathit{eq}}$, one would then reformulate the definition of the $q$-connection on $\mathit{SH}^*_{\mathit{eq}}(M)$ in the following equivalent (up to chain homotopy) way:
\begin{equation} \label{eq:gamma-q-with-bullet} 
\Gamma_q(x) \htp u \partial_q x + u \rho_{\mathit{eq}}(x) + k \bullet_{\mathit{eq}} x.
\end{equation}
To tie that discussion to \cite[Section 3]{seidel16}, suppose that $k$ is in fact nullhomologous, say $k = d\theta$. One can then further rewrite \eqref{eq:gamma-q-with-bullet} as
\begin{equation} \label{eq:connection-times-u}
\Gamma_q(x) \htp u(\partial_q x + \rho_{\mathit{eq}}(x) - [\theta,x]_{\mathit{eq}}).
\end{equation}
At this point, one can divide the entire right hand side by $u$, which produces a (degree 0) endomorphism $\nabla_{\mathit{eq}}$ of $\mathit{SH}^*_{\mathit{eq}}$ satisfying 
\begin{equation}
\begin{aligned}
& \nabla_{\mathit{eq}}(fx) = f \nabla_{\mathit{eq}} x + (\partial_q f)x, \\
& \Gamma_q = u\nabla_{\mathit{eq}}.
\end{aligned}
\end{equation}
Furthermore, setting $u = 0$ in the definition of $\nabla_{\mathit{eq}}$ reproduces the connection $\nabla^{-1}$ from \cite{seidel16}. 
\end{remark}

\subsection{Well-definedness}
In our discussion of \eqref{eq:exact} and \eqref{eq:exact-mod-m}, we have implicitly made use of a property which requires justification, namely, independence of the $q$-connection from the choice of representative $\Omega$, within a fixed class $[\Omega] \in H^2(M;A)$. Generally speaking, the same issue arises with respect to all the other choices made in the construction. Luckily, the necessary well-definedness statements can all be proved in a uniform and fairly routine way; we will not give the details, but we will explain how the construction is set up.

Suppose that we are given two choices $(H_{\pm}, J_{\pm}, \Omega_{\pm})$ of data underlying the definition of Floer cohomology. On $\bR$, choose a Morse function $f = f(p)$ with exactly three critical points, namely local minima at $p = \pm 1$ and a maximum at $p = 0$. Instead of \eqref{eq:floer}, we now consider coupled equations involving a gradient flow line of $f$:
\begin{equation} \label{eq:floer-3}
\left\{
\begin{aligned}
& u: \bR \times S^1 \longrightarrow M, \\
& z: \bR \longrightarrow \bR, \\
& \partial_s u + \tilde{J}_{z(s),t}(\partial_t u - \tilde{X}_{z(s),t}) = 0, \\
& \partial_s z + f'(z) = 0,
\end{aligned}
\right.
\end{equation}
with the obvious asymptotics. The main point is that the almost complex structure $\tilde{J}$ and Hamiltonian $\tilde{H}$ depend on the additional parameter $p \in \bR$. We ask for $(\tilde{H}_{p,t}, \tilde{J}_{p,t})$ to agree with $(H_{-,t}, J_{-, t})$ if $p < \epsilon$ (for some small $\epsilon>0$), and with $(H_{+,t},J_{+,t})$ for $p > 1-\epsilon$. We also assume that a cycle $\tilde{\Omega} \subset \bR \times M$ is given, which equals $\bR \times \Omega_-$ on $\{p<\epsilon\}$ and $\bR \times \Omega_+$ on $\{p > 1-\epsilon\}$. We then consider a Floer cochain space whose generators are pairs consisting of a critical point of $f$ and a one-periodic orbit of the Hamiltonian associated to that critical point. The differential counts solutions of \eqref{eq:floer-3} with weights given by the intersection number of the cylinder 
\begin{equation} \label{eq:graph-cylinder}
\begin{aligned}
& \bR \times S^1 \longrightarrow \bR \times M, \\
& (s,t) \longmapsto (z(s),u(s,t))
\end{aligned}
\end{equation}
with $\tilde{\Omega}$. The resulting complex, denoted by $\mathit{CF}^*(\tilde{H})$, can be written as a mapping cone
\begin{equation} \label{eq:2-cone}
\mathit{CF}^*(\tilde{H}) = \left\{ \mathit{CF}^*(H_-) \xrightarrow{-\mathit{id}}
\mathit{CF}^{*-1}(H_-) \xleftarrow{\text{continuation map}}
\mathit{CF}^*(H_+) \right\}.
\end{equation}

\begin{example}
Let's consider the case where $(H_{\pm},J_{\pm}) = (H,J)$ are the same (and we choose $(\tilde{H},\tilde{J})$ trivially), but with different $\Omega_{\pm}$. The solutions of \eqref{eq:floer-3} relevant for the continuation map, as defined in \eqref{eq:2-cone}, consist of a trivial cylinder $u(s,t) = x(t)$ together with the unique (up to translation) $z(s)$ connecting $p = 0$ and $p = 1$. Hence, the resulting map is precisely \eqref{eq:rescale}. 
\end{example}

As an immediate consequence of \eqref{eq:2-cone}, we have a chain homotopy commutative diagram
\begin{equation} \label{eq:h-101}
\xymatrix{
\mathit{CF}^*(H_-) && \ar[ll]_-{\text{continuation map}} \mathit{CF}^*(H_+)
\\
& 
\ar[ul]^-{\text{projection }}_{\htp} \ar[ur]_-{\text{ projection}}^{\htp}
\mathit{CF}^*(\tilde{H})
}
\end{equation} 
in which both projections are quasi-isomorphisms. To see how this viewpoint is useful for studying the well-definedness of various additional structures on Floer cohomology, take for instance the operations \eqref{eq:iota} on $\mathit{CF}^*(H_{\pm})$, denoting them by $\iota_{\pm}$. One can define a similar operation $\tilde{\iota}$ on $\mathit{CF}^*(\tilde{H})$, using a suitable version of \eqref{eq:floer-3} with evaluation constraints in $q^{-1}\tilde{\Omega}$. By construction, this operation will be strictly compatible with the projections in \eqref{eq:h-101}, which proves that the continuation map relates $\iota_{\pm}$ up to chain homotopy (of course, there is nothing miraculous about this: the desired chain homotopy is encoded into the definition of $\tilde{\iota}$).

The same idea can be used to show that equivariant Floer cohomology is independent of the choices made in its construction. One defines a version of the equivariant theory that is coupled to Morse theory as in \eqref{eq:floer-3}, denoted by $\mathit{CF}^*_{\mathit{eq}}(\tilde{H})$, which comes with projections to $\mathit{CF}^*_{\mathit{eq}}(H_{\pm})$. A filtration argument (by powers of the equivariant parameter $u$) shows that these projections are quasi-isomorphisms. One defines the equivariant continuation map, up to chain homotopy, by filling in the analogue of \eqref{eq:h-101}:
\begin{equation} \label{eq:h-101-eq}
\xymatrix{
\mathit{CF}_{\mathit{eq}}^*(H_-) \;\; && \ar@{-->}[ll]_-{\text{equivariant continuation map}} \;\; \mathit{CF}^*_{\mathit{eq}}(H_+)
\\
& 
\ar[ul]^-{\text{projection }}_{\htp} \ar[ur]_-{\text{ projection}}^{\htp}
\mathit{CF}^*_{\mathit{eq}}(\tilde{H})
}
\end{equation}
Following the same strategy as before, one can show that $\mathit{CF}^*_{\mathit{eq}}(\tilde{H})$ carries all the same structures as $\mathit{CF}^*_{\mathit{eq}}(H_{\pm})$, including the $q$-connection, in a way which is compatible with the projections. This implies compatibility of the $q$-connection with the equivariant continuation map.

\begin{remark} \label{th:non-contractible}
We have allowed only nullhomogous $1$-periodic orbits, since that simplifies the exposition a little. Let's see what modifications are necessary in order to drop that restriction. The resulting Floer cohomology group will come with a splitting
\begin{equation} \label{eq:direct-summand}
\bigoplus_{a \in H_1(M)} \mathit{HF}^*(M,\epsilon)_a,
\end{equation}
with the previous definition contained in this as the $a = 0$ summand. The same decomposition will apply in the equivariant case, and all the structures we are considering, including the $q$-connection, are compatible with the splitting.

To obtain a $\bZ$-grading on \eqref{eq:direct-summand}, one needs to choose a trivialization of the anticanonical bundle; in fact, only the homotopy class of the trivialization is important. Changing that class by $\alpha \in H^1(M;\bZ)$ has the effect of shifting the grading of each summand in \eqref{eq:direct-summand} by an even amount $2 \int_a \alpha$; see e.g.\ \cite{seidel99}.

The other issue has to do with the maps \eqref{eq:rescale} which relate different choices of $\Omega$. Let's suppose that we define this map using intersection numbers with an interpolating $\tilde{\Omega}$. Changing $\tilde{\Omega}$ by $\alpha \in H^2_{\mathit{cpt}}(\bR \times M;A) = H^1(M;A)$ has the effect of multiplying the map \eqref{eq:rescale} with
\begin{equation} \label{eq:topological-rescaling}
x \longmapsto q^{\int_a \alpha}x
\end{equation}
on each summand \eqref{eq:direct-summand}. Hence, the continuation maps are no longer quite canonical. One can try to cure the ambiguity by adding more data, but that is irrelevant for our purpose: $\Gamma_q$ is compatible with those maps for any choice of $\tilde{\Omega}$. 

The last-mentioned observation may seem paradoxical, and deserves some further explanation. An equivalent statement is that $\Gamma_q$ remains invariant under the automorphism \eqref{eq:topological-rescaling} of \eqref{eq:direct-summand}. By its connection property,
\begin{equation} \label{eq:gamma-rescaled}
q^{-\int_a\! \alpha}\, \Gamma_q (q^{\int_a \alpha}\, x) = \Gamma_q(x) + uq^{-1} \big(\textstyle\int_a \!\alpha\big)x.
\end{equation}
Hence, what we are saying is that the action of $u\!\textstyle\int_a \alpha \in \Lambda[[u]]$ on $\mathit{HF}^*_{\mathit{eq}}(M,\epsilon)_a$ is trivial for any $a$. Indeed, one can prove directly that this is the case (one possible proof goes via the formalism mentioned in Remark \ref{th:all-classes}). As a noteworthy consequence, if $a$ is a primitive non-torsion class, then $u$ acts trivially on $\mathit{HF}^*_{\mathit{eq}}(M,\epsilon)_a$, and hence the behaviour of $\Gamma_q$ on that summand is entirely determined by the cap action of $q^{-1}[\Omega]$ on $\mathit{HF}^*(M,\epsilon)_a$ (the forgetful map being injective).
\end{remark}

%
%

\subsection{Small Hamiltonians}
Suppose now that we have time-independent $(H,J)$, satisfying the properties which are necessary for \eqref{eq:chain-iso} to apply.
%
When defining equivariant Floer cohomology, one can choose all the data involved (either \eqref{eq:jeq} or \eqref{eq:x-family}, depending on the approach chosen) to be equal to $(H,J)$. This means that all the equations \eqref{eq:floer-2} which appear reduce to the standard Floer equation for $(H,J)$. If one looks at the parametrized moduli space which underlies $d_{\mathit{eq},k}$ for some $k > 0$, all its points have expected dimension
\begin{equation}
2k - 1 + \mathrm{index}(D_u) \geq 2k-1 > 0.
\end{equation}
As a consequence, if we use this setup to define the equivariant differential, then $d_{\mathit{eq},k} = 0 $ for all $k>0$, so that 
\begin{equation}
d_{\mathit{eq}} = d. 
\end{equation}
This means that \eqref{eq:cf-eq} is an isomorphism of chain complexes, and hence
\begin{equation} \label{eq:u-trivial}
\mathit{HF}^*_{\mathit{eq}}(M,\epsilon) = \mathit{HF}^*(M,\epsilon)[[u]].
\end{equation}

For $\lambda_{\mathit{eq}}$ and $\iota_{\mathit{eq}}$, one can use a similar approach. Namely, take some family $(H^{\mathit{fix}}_{s,t}, J^{\mathit{fix}}_{s,t})$, with $(s,t) \in \bR \times S^1$ as usual, and which agrees with our fixed $(H,J)$ for $|s| \gg 0$. What one can arrange is that all equations which appear in the definition of $\lambda_{\mathit{eq}}$ and $\iota_{\mathit{eq}}$ are of the form
\begin{equation} \label{eq:theta-floer}
\left\{
\begin{aligned}
& \partial_s u + J_{s,t+\theta}^{\mathit{fix}} (\partial_t u - X_{s,t+\theta}^{\mathit{fix}}) \partial_t u = 0, \\
& u(0,\theta) \in q^{-1}\Omega.
\end{aligned}
\right.
\end{equation}
In words, all the $(H^\ast,J^\ast)$ that we encounter in \eqref{eq:floer-2} are rotated versions of $(H^{\mathit{fix}}, J^{\mathit{fix}})$, and the amount of rotation is dictated by the position of the point at which the incidence condition is imposed; this is possible only because the limiting data $(H,J)$ are $t$-independent. As a consequence, parametrized moduli spaces again can't have any isolated points, which means that
\begin{align}
& \lambda_{\mathit{eq}} = 0, \\
& \iota_{\mathit{eq}} = \iota.
\end{align}
Hence, we have
\begin{equation}
\Gamma_q = u\partial_q + \iota.
\end{equation}
By construction, $\iota$ is the quantum cap product with $q^{-1}[\Omega]$. We have therefore shown that the isomorphism \eqref{eq:u-trivial} identifies the $q$-connection with $u\partial_q + q^{-1}[\Omega] \frown \cdot$. 

This establishes the last part of Theorem \ref{th:q}, but in a form that involves the a priori non-canonical isomorphism \eqref{eq:u-trivial}. Following the same idea in Section \ref{subsec:morse-floer}, we will now outline how to resolve that remaining issue. In general, one can (using spaces of half gradient flow lines as parameter spaces) define an equivariant version of the PSS map, which is a chain map
\begin{equation} \label{eq:beq-define}
B_{\mathit{eq}} = B + O(u): \mathit{CM}^*(f)[[u]] \longrightarrow \mathit{CF}^*_{\mathit{eq}}(H).
\end{equation}
The induced map $H^*(M;\Lambda[[u]]) \rightarrow \mathit{HF}^*_{\mathit{eq}}(M,\epsilon)$ is independent of all choices. It is an isomorphism whenever the ordinary PSS map is, thanks to an easy spectral sequence comparison argument. Now, one can find a time-independent $(H,J)$ for which all our previous argument goes through, such that the PSS map reduces to a Morse-theoretic continuation map, and all the higher terms in \eqref{eq:beq-define} vanish, for the same reason as in our discussion of \eqref{eq:u-trivial}. For that particular choice, it then follows that \eqref{eq:u-trivial} agrees with the cohomology level map induced by \eqref{eq:beq-define}, hence is after all part of the standard framework of canonical isomorphisms.

\begin{remark} \label{th:shift}
A natural next step would be to look at the following situation. Suppose that $M$ is a manifold with contact type boundary, such that the Reeb flow on $\partial M$ is $1$-periodic and extends to a Hamiltonian circle action on the whole of $M$. Let's assume that the circle action is ``Calabi-Yau'', which means that there is a trivialization of the anticanonical bundle which is $S^1$-invariant. In that case, a version of the isomorphisms from \cite{seidel96} (see more specifically \cite{ritter14}) yields $\mathit{HF}^*(M,1+\epsilon) \iso \mathit{HF}^*(M,\epsilon)$ for all $\epsilon$. In particular, if $\epsilon>0$ is small, one has
\begin{equation} \label{eq:1-plus-epsilon}
\mathit{HF}^*(M,1+\epsilon) \iso H^*(M;\Lambda).
\end{equation}
The $S^1$-equivariant version of this story appears to be more interesting, and closely related to the ``shift operators'' (studied e.g.\ in \cite{braverman-maulik-okounkov11}). The analogue of \eqref{eq:1-plus-epsilon} says that, still for small $\epsilon > 0$,
\begin{equation}
\mathit{HF}^*_{\mathit{eq}}(M,1+\epsilon) \iso H^*_{\mathit{eq}}(M;\Lambda),
\end{equation}
where the right hand side is equivariant cohomology for the circle action on $M$. Given that, it seems plausible to conjecture that the $q$-connection on $\mathit{HF}^*_{\mathit{eq}}(M, 1+\epsilon)$ should correspond to the equivariant quantum connection on the right hand side. That equivariant connection is interesting even in cases where ordinary Gromov-Witten invariants vanish (see e.g.\ \cite{maulik10, braverman-maulik-okounkov11}).
%
\end{remark}

\subsection{Finite analogues\label{subsec:ganatra}}
To round off our discussion of the $q$-connection, we would like to mention a conjectural analogue in which $S^1$ is replaced by a cyclic group $\bZ/p$ (this addresses a question raised by Ganatra). Cyclic symmetries exist for a much wider class of (not necessarily Hamiltonian) symplectic Floer cohomology groups. Our main point of reference is \cite{seidel14}, which only considers the case $p = 2$; hence, we will ultimately restrict to that case, even though this can be a bit misleading.
%

Let $\phi: M \rightarrow M$ be a symplectic automorphism, and $M_\phi$ its mapping torus. We require a strengthened form of \eqref{eq:cy}, which is that the fibrewise tangent bundle of $M_\phi \rightarrow S^1$ should have vanishing first Chern class. This is equivalent to saying that $\phi$ can be lifted to a graded symplectic automorphism \cite{seidel99}; we fix such a lift. Similarly, we assume that the cohomology class of the fibrewise symplectic form $\omega_\phi$ on $M_\phi$ is integral, and fix an integral lift $[\Omega_\phi]$. Finally, one has to make certain requirements on the behaviour near $\partial M$, which we omit (but see e.g.\ \cite{uljarevic14}). One can then define fixed point Floer cohomology $\mathit{HF}^*(\phi)$, as a $\bZ$-graded module over $\bZ((q))$. The Floer cohomology of iterates $\phi^p$ carries a canonical action of $\bZ/p$. That action can be refined to yield a $\bZ/p$-equivariant version of the theory. From now on, assume that $p$ is prime, and use $(\bZ/p)((q))$ rather than $\bZ((q))$ as coefficients for Floer cohomology. Let's denote the resulting version of the equivariant theory simply by $\mathit{HF}^*_{\mathit{eq}}(\phi^p)$, omitting any mention of the coefficients for the sake of brevity. It is a finitely generated module over $H^*(B \bZ/p; (\bZ/p)((q)))$. We denote by $u$ the standard degree $2$ generator of that ring (for $p = 2$, $u$ is the square of the degree $1$ generator, which we denote by $h$; this is of course no longer true for $p>2$, even though there is a relation between the two via Massey products). The conjectural analogue of the $q$-connection is an endomorphism satisfying the same condition as in \eqref{eq:q-connection},
\begin{equation} \label{eq:finite-q}
\Gamma_q: \mathit{HF}^*_{\mathit{eq}}(\phi^p) \longrightarrow \mathit{HF}^{*+2}_{\mathit{eq}}(\phi^p).
\end{equation}

We will not attempt to construct \eqref{eq:finite-q} here, but we can outline a bit of the formal skeleton of the construction. For simplicity, suppose from now on that $p  = 2$. Let $\mathit{CF}^*(\phi^2)$ be the chain complex underlying $\mathit{HF}^*(\phi^2)$. Using an incidence condition with $q^{-1}\Omega_\phi$, one defines
\begin{equation} \label{eq:lambda-iota-2}
\begin{aligned}
& \iota: \mathit{CF}^*(\phi^2) \longrightarrow \mathit{CF}^{*+2}(\phi^2), && d\iota - \iota d = 0, 
\\
& \lambda: \mathit{CF}^*(\phi^2) \longrightarrow \mathit{CF}^{*+1}(\phi^2), && d\lambda + \lambda d = 0, 
\end{aligned}
\end{equation}
much as before. The operations \eqref{eq:lambda-iota-2} don't depend on having the square of a map, but the next steps do: one has
\begin{equation} \label{eq:s-sigma}
\begin{aligned}
& \sigma: \mathit{CF}^*(\phi^2) \longrightarrow \mathit{CF}^*(\phi^2), && d\sigma + \sigma d = 0, \\
& \Sigma: \mathit{CF}^*(\phi^2) \longrightarrow \mathit{CF}^{*-1}(\phi^2), && d\Sigma + \Sigma d = \sigma^2 + \mathit{id}.
\end{aligned}
\end{equation}
The first of these maps induces an endomorphism on cohomology, and the second shows that this endomorphism is an involution (recall that we are in characteristic $2$, so signs don't matter). We can introduce further operations, which can be seen as measuring the failure of $\iota$ to be compatible with the $\bZ/2$-action:
\begin{equation} \label{eq:xi-map}
\begin{aligned}
& \xi: \mathit{CF}^*(\phi^2) \longrightarrow \mathit{CF}^{*+1}(\phi^2), 
&& d\xi + \xi d = \sigma \iota + \iota \sigma, \\
& \Xi: \mathit{CF}^*(\phi^2) \longrightarrow \mathit{CF}^*(\phi^2), 
&& d\Xi + \Xi d = \sigma \xi + \xi \sigma + \Sigma \iota + \iota \Sigma + \lambda.
 \end{aligned}
\end{equation}
The equivariant differential on $\mathit{CF}^*_{\mathit{eq}}(\phi^2) = \mathit{CF}^*(\phi^2)[[h]]$ is of the form
\begin{equation} \label{eq:deq-finite}
\begin{aligned}
& d_{\mathit{eq}} = d + h (id + \sigma) + h^2 \Sigma + O(h^3):
\mathit{CF}^*_{\mathit{eq}}(\phi^2) \longrightarrow \mathit{CF}^{*+1}_{\mathit{eq}}(\phi^2), 
&& d_{\mathit{eq}}^2 = 0.
\end{aligned}
\end{equation}
One would then define the discrete analogue of the $q$-connection on $\mathit{CF}^*_{\mathit{eq}}(\phi^2)$ by a formula
\begin{equation}
\Gamma_q = \iota + h\xi + h^2 (\partial_q + \Xi) + O(h^3):
\mathit{CF}^*_{\mathit{eq}}(\phi^2) \longrightarrow \mathit{CF}^{*+2}_{\mathit{eq}}(\phi^2).
\end{equation}

Let's conclude this sketch by mentioning why one might be interested in studying such operations. On the algebraic side, the theory of differential operators in finite characteristic is much richer than its characteristic $0$ counterpart (see e.g.\ \cite{katz72} for applications to Gauss-Manin connections). On the geometric side, one has the equivariant squaring map \cite{seidel14c}
\begin{equation} \label{eq:square}
\mathit{HF}^*(\phi) \stackrel{Q}{\longrightarrow} \mathit{HF}^{2*}_{\mathit{eq}}(\phi^2).
\end{equation}
This satisfies $Q(qx) = q^2Q(x)$, hence its image is a subspace over $(\bZ/2)((q^2))$. The kernel of $\Gamma_q$ is a subspace of the same kind. One can speculate that the composition of \eqref{eq:square} and $\Gamma_q$ should be zero, and then further consider the relation of such statements with localisation theorems as in \cite{seidel14c}.

%

\section{The $u$-connection\label{sec:u}}
This section adapts the previous arguments to prove the results stated in Section \ref{subsec:u}. To avoid repetition, much of the discussion will be presented in abbreviated form. The main difference can be expressed as follows. Originally, we worked in a situation where Floer cohomology groups were $\bZ$-graded, which was useful in simplifying technical aspects of pseudo-holomorphic curve theory, but played no fundamental role in our argument. This time, grading issues will be key to our discussion.

\subsection{Floer cohomology revisited}
We will again work with a manifold $M$ and $(\scrH,\scrJ)$ satisfying \eqref{eq:h} and \eqref{eq:convexity}, but now assume \eqref{eq:monotone}. Choose a codimension two cycle 
$C = m_1C_1 + \cdots + m_j C_j$, with $m_j \in \bZ$, which represents the first Chern class. On the complement of $C$, we fix a trivialization of the anticanonical bundle $K_M^{-1}$, such that the following holds. Suppose that $S$ is a compact oriented surface with boundary, $u: S \rightarrow M$ a map such that $u(\partial S) \cap C = \emptyset$, together with a section $\xi$ of $u^*K_M^{-1}$ which is nonzero on the boundary. Then,
\begin{equation} \label{eq:c1-rel}
\sum_{\xi^{-1}(0)} \pm 1 = u \cdot C + w(\xi|\partial S).
\end{equation}
Here, the left hand side is the count of zeros with the usual signs, and $w(\xi|\partial S)$ is the winding number (or degree) of $\xi|\partial S$ as a map $\partial S \rightarrow S^1$, defined using the given trivialization. (In an algebro-geometric context, one would get to \eqref{eq:c1-rel} by taking a rational section of $K_M^{-1}$ whose zeros and poles equal the divisor $C$, and using that for the trivialization.)

When defining the Floer cochain complex, we choose $(H_t,J_t)$ so that all $1$-periodic orbits of $H$ are disjoint from $C$. Then, each such orbit still has an index $i(x) \in \bZ$, but \eqref{eq:index} should now be replaced by
\begin{equation} \label{eq:index-2}
\mathrm{index}(D_u) = i(x_-) - i(x_+) + 2(u \cdot C).
\end{equation}
Because of \eqref{eq:monotone}, one also has a one-form $\theta$ on $M \setminus C$, such that for the resulting actions $A_H(x)$, the analogue of \eqref{eq:energy} holds:
\begin{equation} \label{eq:energy-2}
E(u) = \int_{\bR \times S^1} \|\partial_s u\|^2 = A_H(x_-) - A_H(x_+) + \gamma (u \cdot C)  + \int_{\bR \times S^1} (\partial_s H_{s,t}^*)(u(s,t)).
\end{equation}
Equivalently, one can define the normalized action as
\begin{equation}
\bar{A}_H(x) = A_H(x) - \frac{\gamma}{2} i(x),
\end{equation}
and then a combination of \eqref{eq:index-2} and \eqref{eq:energy-2} yields the familiar energy bound for solutions with a given index:
\begin{equation} \label{eq:new-energy}
E(u) = \bar{A}_H(x_-) - \bar{A}_H(x_+) + \frac{\gamma}{2} \mathrm{index}(D_u) + 
\int_{\bR \times S^1} (\partial_s H_{s,t}^*)(u(s,t)).
\end{equation}

Dropping Novikov coefficients, we define the Floer cochain complex as the $\bZ/2$-graded group (with the grading given by $i(x)$ mod $2$) 
\begin{equation} \label{eq:floer-z}
\mathit{CF}^*(H) = \bigoplus_x \bZ_x.
\end{equation}
Let's define operations $\lambda$ and $\iota$ in parallel with those in Section \ref{subsec:floer}, but using $C$ instead of $q^{-1}\Omega$ in all incidence conditions. In particular, the cohomology level map induced by $\iota$ is now the quantum cap product with $c_1(M)$. These operations, and the BV operator, still satisfy Lemmas \ref{th:iota-lambda}--\ref{th:delta-squared}. Consider the endomorphism
\begin{equation} \label{eq:pseudo-grading}
\begin{aligned}
& \mu: \mathit{CF}^*(H) \longrightarrow \mathit{CF}^*(H), \\
& \mu(x) = i(x) x.
\end{aligned}
\end{equation}
This is not compatible with the differential. Instead, assuming that $\lambda$ has been defined using the same $(H,J)$ as the Floer differential, one has an analogue of \eqref{eq:diff-d}, namely
\begin{equation} \label{eq:mu-d}
\mu d - d \mu= d - 2\lambda.
\end{equation}
To see why this is the case, let $u$ be a solution of \eqref{eq:floer} which contributes to the Floer differential. Then $\mathrm{index}(D_u) = 1$, and hence
\begin{equation} \label{eq:failure-of-grading}
i(x_-) - i(x_+) = 1 - 2(u \cdot C).
\end{equation}
With that in mind, we write
\begin{equation}
\mu d(x_+) - d \mu(x_+) = dx_+ - 2 \sum_u \pm (u \cdot C) x_-.
\end{equation}
The term $\pm (u \cdot C)$ counts (with signs) the possible ways of translating $u$ in $s$-direction, and introducing some $r \in S^1$, so that the incidence condition $u(0,-r) \in C$ is satisfied. Hence, that term is exactly the coefficient of $x_-$ in $\lambda(x_+)$.
 
\subsection{The equivariant theory}
We define $\mathit{CF}^*_{\mathit{eq}}(H) = \mathit{CF}^*(H)[[u]]$, as a $\bZ/2$-graded $\bZ[[u]]$-module. This carries the same formalism of operations as in Section \ref{subsec:formal}, again using $C$ instead of $q^{-1}\Omega$. Let's extend the operator \eqref{eq:pseudo-grading} $u$-linearly to $C^*_{\mathit{eq}}(H)$. The analogue of \eqref{eq:diff-deq}, which holds assuming appropriate choices in the definition of $\lambda_{\mathit{eq}}$, is
\begin{equation} \label{eq:mu-d-eq}
\mu \, d_{\mathit{eq}} - d_{\mathit{eq}} \mu = \sum_{k=0}^\infty \big(1 - 2k \big) u^k d_{\mathit{eq},k} - 2\lambda_{\mathit{eq}}.
\end{equation}
Solutions $u$ which contribute to $d_{\mathit{eq},k}$ have $\mathrm{index}(D_u) = -\mathrm{dim}(P_k) = 1-2k$, which explains the occurrence of that term; the rest is exactly as before. One can differentiate elements of $C^*_{\mathit{eq}}$ in $u$-direction in the obvious way, and this satisfies
\begin{equation}
\partial_u d_{\mathit{eq}} - d_{\mathit{eq}} \partial_u = \sum_k k u^{k-1} d_{\mathit{eq},k}.
\end{equation}
With that in mind, \eqref{eq:mu-d-eq} can be rewritten as
\begin{equation}
(2u\partial_u + \mu) d_{\mathit{eq}} - d_{\mathit{eq}} (2u\partial_u + \mu) = d_{\mathit{eq}} - 2\lambda_{\mathit{eq}}.
\end{equation}
It follows that the map
\begin{equation} \label{eq:define-gamma-u}
\begin{aligned}
& \Gamma_u: \mathit{CF}^*_{\mathit{eq}}(H) \longrightarrow \mathit{CF}^*_{\mathit{eq}}(H), \\
& \Gamma_u(x) =  2u^2\partial_u x + u\mu(x) - 2\iota_{\mathit{eq}}(x)
\end{aligned}
\end{equation} 
satisfies
\begin{equation}
\Gamma_u d_{\mathit{eq}} - d_{\mathit{eq}} \Gamma_u = u\, d_{\mathit{eq}}.
\end{equation}
%
Even though \eqref{eq:define-gamma-u} is not a chain map, it does induce a map on cohomology. We define the $u$-connection to be that induced map, which clearly satisfies the property from \eqref{eq:u-connection}. Commutativity of \eqref{eq:u-zero} is also obvious, because $\Gamma_u(x) = -2\iota(x) + O(u)$. The properties \eqref{eq:graded-gamma} and \eqref{eq:c1-mod-2m} follow in the same way as their counterparts for the $q$-connection. 

\begin{remark}
In this context, it is unproblematic to drop the assumption that the $1$-periodic orbits should be nullhomologous, leading to Floer cohomology groups as in \eqref{eq:direct-summand}. One wrinkle of the resulting discussion deserves some mention. Namely, suppose that $c_1(M) = 0$. Then, Floer cohomology admits a $\bZ$-grading, but that grading is not unique if one includes all $1$-periodic orbits, as already mentioned in Remark \ref{th:non-contractible}. In spite of that, \eqref{eq:graded-gamma} holds for any choice of $\bZ$-grading: given two choices, the difference between the resulting grading operators multiplies each summand $\mathit{HF}^*_{\mathit{eq}}(M,\epsilon)_a$ by $2\int_a \alpha$, for $\alpha \in H^1(M;\bZ)$; and that operation becomes trivial if multiplied by $u$. In particular, if $a$ is primitive and non-torsion, then $\Gamma_u = u\,\mathrm{deg} = 0$ on $\mathit{HF}^*_{\mathit{eq}}(M,\epsilon)_a$. Similar observations apply to \eqref{eq:c1-mod-2m}.
\end{remark}

Our next task is to explain \eqref{eq:ueq2}. Let's change the definition of $\mathit{CF}^*(H)$ to make it into a $\bZ$-graded module over $\bC[q,q^{-1}]$, where $|q| = 2$. In view of \eqref{eq:failure-of-grading}, this is done by defining the differential to be
\begin{equation} \label{eq:graded-d}
dx_+ = \sum_u \pm q^{u \cdot C} x_-.
\end{equation}
The same principle will be applied to all other operations. For instance, the graded version of the $u$-connection is an endomorphism of $\mathit{CF}^*_{\mathit{eq}}(H)$ of degree $2$, still given by \eqref{eq:define-gamma-u}. The $q$-connection can be defined as
\begin{equation}
\Gamma_q(x) = u \partial_q x + q^{-1} \iota_{\mathit{eq}}(x).
\end{equation}
If one assumes \eqref{eq:monotone-2}, then this is indeed the same as our original approach towards defining the $q$-connection (taking into account that we are using $C$, which represents $[\omega]$, instead than $q^{-1}\Omega$, which represented $q^{-1}[\omega]$). Clearly, one has
\begin{equation} \label{eq:gamma-gamma}
\Gamma_u + 2q \Gamma_q = u (\mu + 2u\partial_u + 2q \partial_q) = u \mathrm{deg},
\end{equation}
where $\mathrm{deg}$ is the grading operator, multiplying each element of $\mathit{CF}^*_{\mathit{eq}}(H)$ by its degree, exactly as in \eqref{eq:deg-mu}. The relation \eqref{eq:gamma-gamma} implies \eqref{eq:ueq2}.
What remains to be discussed is the polynomial version of equivariant Floer cohomology. For that purpose, we adapt the argument from \cite[Section 7]{seidel14c} to the $S^1$-equivariant case. Suppose as before that \eqref{eq:monotone-2} holds, meaning that $\gamma = 1$. Let's consider the definition of the equivariant differential. Maps $u$ that contribute to $d_{\mathit{eq},k}$ have $\mathrm{index}(D_u) = 1-2k$. The key point is to govern the last term in \eqref{eq:new-energy}, so that it grows less slowly than the index term, yielding an energy which becomes negative (implying that the relevant moduli spaces must be empty) for $k \gg 0$. For the equation \eqref{eq:floer} of the Floer differential itself, the problematic term vanishes; in the definition of the BV operator (at least, as we have approached it, which means avoiding Morse-Bott methods) it is necessarily nontrivial, but can be made arbitrarily small by choosing the Hamiltonians to be close to time-independent ones. More systematically, one has the following:

\begin{lemma} \label{th:epsilon}
Fix some constant $\delta>0$. Then, one can choose the data underlying the definition of the equivariant differential $d_{\mathit{eq}}$, such that the following holds. For any equation \eqref{eq:floer-2} which contributes to $d_{\mathit{eq},k}$ ($k > 0$),
\begin{equation} \label{eq:max}
\int_{\bR \times S^1} \mathrm{max}_{x \in M}(\partial_s H^*_{s,t}(x))  < \delta k.
\end{equation}
\end{lemma}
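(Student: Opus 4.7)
The plan is to construct the equivariant data as (essentially) a $t$-direction rotation of a nearly time-independent pair $(H, J)$, so that $\partial_s H^{d_{\mathit{eq}}}$ is controlled by the product of $\|\partial_t H\|_{C^0}$ and a rotation rate whose fiberwise total integral grows only linearly in $k$.

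First, I will choose $(H_t, J_t)$ so that $\|\partial_t H_t\|_{C^0} < \delta/2$, which is possible by taking a small generic time-dependent perturbation of a time-independent Morse Hamiltonian (keeping nondegeneracy of $1$-periodic orbits and regularity of the Floer differential). Second, using the second version of the construction of $d_{\mathit{eq}}$ in Section 5b, I will take the family \eqref{eq:x-family} of the form
\[
H^{d_{\mathit{eq}}}_{y,t} = H_{t + \Psi(y)}, \qquad J^{d_{\mathit{eq}}}_{y,t} = J_{t + \Psi(y)} + \zeta_{y,t},
\]
where $\zeta$ is a small compactly supported perturbation of $J$, vanishing near the ends $y_\pm$ of each fiber and generic enough to achieve transversality of the parametrized moduli spaces (such $\zeta$ has no effect on $\partial_s H^{d_{\mathit{eq}}}$). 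The function $\Psi\colon \bigsqcup_k \scrP_k \to S^1$ is to be chosen inductively in $k$ so that it satisfies the asymptotic and boundary-compatibility conditions required by the second definition (vanishing near $y_-([v])$, equal to $\alpha_k([v])$ near $y_+([v])$, and compatible with the additivity \eqref{eq:alpha-additivity} on the strata $P_{k_1} \times P_{k_2}$), and so that for every flow line $[v] \in P_k$, the total variation (arclength in $S^1$) of the restriction of $\Psi$ to the corresponding $\bR$-fiber is at most $k$.

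With such $\Psi$ in hand, the estimate is immediate: along any interior flow line $[v] \in P_k$, the chain rule gives
\[
\max_x \partial_s H^{d_{\mathit{eq}}}_{v,s,t}(x) \leq |\partial_s \Psi(y(s))| \cdot \|\partial_t H\|_{C^0} < \tfrac{\delta}{2}\, |\partial_s \Psi(y(s))|,
\]
and integrating over $\bR \times S^1$ (noting that $\Psi$ is $t$-independent, so the $t$-integral gives a factor of $1$) yields
\[
\int_{\bR \times S^1} \max_x \partial_s H^{d_{\mathit{eq}}}_{v,s,t}(x)\, ds\, dt \;\leq\; \tfrac{\delta}{2}\cdot\bigl(\text{TV of } \Psi|_{\text{fiber over }[v]}\bigr) \;\leq\; \tfrac{\delta k}{2} \;<\; \delta k.
\]

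The main obstacle is the inductive construction of $\Psi$ with the prescribed fiberwise TV bound. For $k=1$, the space $\scrP_1$ is diffeomorphic to $\bC^*$ and $\Psi_1$ admits an explicit construction whose restriction to each radial ray is an $S^1$-valued path of arclength at most $1$; this is the minimum possible since $\alpha_1 \colon P_1 \to S^1$ has degree $1$. For $k \geq 2$, the data on the boundary $\partial\scrP_k$ is already determined by the inductive hypothesis, with fiberwise TV summing to at most $k_1 + k_2 = k$ on each face over $P_{k_1} \times P_{k_2}$; extending $\Psi_k$ to the interior while preserving this bound rests on a contractibility argument in the space of fiberwise $S^1$-valued paths subject to the boundary constraints, where the topological simplicity of $P_k$ (Lemma \ref{th:s-cobordism}) makes the extension problem unobstructed. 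The linear-in-$k$ growth of the bound is forced by the additivity of TV across the breaking strata and cannot fundamentally be improved, but this is exactly what the lemma asks for.
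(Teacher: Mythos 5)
Your ansatz $H^{d_{\mathit{eq}}}_{y,t} = H_{t + \Psi(y)}$, with a globally defined function $\Psi \colon \scrP_k \to S^1$ satisfying $\Psi \to 0$ near $y_-$ and $\Psi \to \alpha_k([v])$ near $y_+$, is topologically impossible already for $k=1$, and this kills the proposal. Since $\Psi$ extends continuously to $\bar\scrP_1 \cong P_1 \times [0,1]$, the restrictions $\Psi(\cdot,0)$ and $\Psi(\cdot,1)$ are homotopic maps $P_1 \to S^1$; but $\Psi(\cdot,0)$ is constant (degree $0$) while $\Psi(\cdot,1) = \alpha_1$ has degree $1$. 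The same obstruction recurs for all $k$, since $\alpha_k$ generates $H^1(P_k;\bZ) \cong \bZ$. You cite the degree of $\alpha_1$ yourself (``this is the minimum possible since $\alpha_1$ has degree $1$''), without noticing that nonvanishing of $[\alpha_k] \in H^1(P_k;\bZ)$ is precisely what rules out the ``pure rotation'' form of the data in the interior of $\scrP_k$. The second definition in the paper only requires \eqref{eq:x-family} to be a rotation of $(H_t,J_t)$ \emph{near} $\bar\scrP_k \setminus \scrP_k$; away from those ends it is a completely general family, and that freedom is essential.

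The paper's actual argument sidesteps rotations entirely: it observes that the constraint \eqref{eq:max} (with a given constant on the right) cuts out an \emph{open convex} subset of the space of families $(H^*_{s,t})$ satisfying \eqref{eq:tau-hj}. This is the key lemma your proposal lacks. Convexity lets one extend the boundary data over $P_k$ by a partition of unity while preserving the bound (e.g.\ for the BV operator one may interpolate affinely, $H^*_{r,s,t} = (1-\chi(s))H_t + \chi(s)H_{t+\alpha_1(r)}$, which \emph{is} continuous in $r$ and has $\int \max_x \partial_s H^* \leq 2\|H_t - H_0\|_{C^0}$, small when $H_t$ is near-time-independent); openness lets one then perturb for transversality. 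Your reduction to a fiberwise total-variation bound on $\Psi$, and the asserted ``contractibility argument'' that would deliver such a $\Psi$, have no content: the relevant space is empty.
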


\begin{proof}
We follow the second construction of the equivariant differential. Suppose that, when defining Floer cohomology, one takes $(H_t)$ to be a small perturbation of a time-independent Hamiltonian. Then, one can certainly define $d_{\mathit{eq},1}$ (the BV operator) so that the $k = 1$ case of \eqref{eq:max} is satisfied. That prescribes what \eqref{eq:x-family} does over $\partial P_2$. If we extend \eqref{eq:max} to broken cylinders by adding up the relevant terms for each component, then the given choice over $\partial P_2$ satisfies that condition. Now, among all functions $(H^*_{s,t})$ satisfying \eqref{eq:tau-hj} for some $\tau$, those for which the left hand side of \eqref{eq:max} is less than a given constant form an open convex subset. Hence, when extending the choice of \eqref{eq:x-family} from $\partial P_2$ over the whole of $P_2$, one can arrange that \eqref{eq:max} remains true, by using partitions of unity. Openness is important since it allows us to achieve transversality while still satisfying the necessary bounds. The same inductive procedure is then repeated for higher $k$.
\end{proof}

For us, it is sufficient to take $\delta < 1$. Then, \eqref{eq:new-energy} shows that $d_{\mathit{eq}}$ is indeed polynomial in $u$, hence yields a differential on $\mathit{CF}^*_{\mathit{poly}}(H) = \mathit{CF}^*(H)[u]$. The same principle applies to $\lambda_{\mathit{eq}}$ and $\iota_{\mathit{eq}}$, hence to the definition of the $u$-connection.


\end{document}